\newcommand{\N}{\mathds{N}}
\newcommand{\Z}{\mathds{Z}}
\newcommand{\R}{\mathds{R}}
\newcommand{\C}{\mathds{C}}
\providecommand{\abs}[1]{\left\lvert#1\right\rvert}
\providecommand{\norm}[1]{\left\lVert#1\right\rVert}
\providecommand{\differential}{\mathrm{d}}
\providecommand{\Hol}{\mathcal{H}}
\providecommand{\Mer}{\mathcal{M}\mathscr{er}}
\DeclareMathOperator{\TextRe}{Re}
\DeclareMathOperator{\TextIm}{Im}
\DeclareMathOperator{\Sec}{Sect}
\renewcommand{\d}{\differential}
\renewcommand{\Re}{\TextRe}
\renewcommand{\Im}{\TextIm}
\renewcommand{\exp}{\mathrm{e}}
\newcommand{\from}{\colon}
\newtheorem{lemma}{Lemma}[section]
\newtheorem{prop}[lemma]{Proposition}
\newtheorem{thm}[lemma]{Theorem}
\newtheorem{corollary}[lemma]{Corollary}
\theoremstyle{definition}
\newtheorem{defi}[lemma]{Definition}
\newtheorem{remark}[lemma]{Remark}
\newtheorem{example}[lemma]{Example}
\newcommand\rlim{
\mathchoice{\vcenter{\hbox{${\scriptstyle{+}}$}}}
{\vcenter{\hbox{$\scriptstyle{+}$}}}
{\vcenter{\hbox{$\scriptscriptstyle{+}$}}}
{\vcenter{\hbox{$\scriptscriptstyle{+}$}}}}
\begin{document}

\title[On the harmonic extension approach]
      {On the harmonic extension approach to fractional powers in Banach spaces}
\author[J.~Meichsner]{Jan Meichsner}
\author[C.~Seifert]{Christian Seifert}
\address{TU Hamburg \\ Institut f\"ur Mathematik \\
Am Schwarzenberg-Campus~3 \\
Geb\"aude E \\
21073 Hamburg \\
Germany}
\email{jan.meichsner@tuhh.de, christian.seifert@tuhh.de}

\subjclass[2010]{Primary 47A05, Secondary 47D06, 47A60}

\keywords{fractional powers, non-negative operator, Dirichlet-to-Neumann operator}

\date{\today}

\begin{abstract}
We show that fractional powers of general sectorial operators on Banach spaces can be obtained by the harmonic extension approach. 
Moreover, for the corresponding second order ordinary differential equation with incomplete data describing the harmonic extension we prove existence and uniqueness of a bounded solution 
(i.e.\ of the harmonic extension). 
\end{abstract}

\maketitle

\section{Introduction}

Fractional powers of closed linear operators
%, i.e.\ operators of the form $A^{\alpha}$ for given $\alpha \in \C$, $A$ a closed linear operator, both subject to further contraints, 
in Banach spaces are a classical topic in operator theory. Given a closed linear operator $A$ in a Banach space $X$ and $\alpha\in\C$ (in particular for $\alpha \in (0,1)$), 
one may ask whether we can define an operator $A^\alpha$, the fractional power of $A$. There are various different methods on how to do this for particular subclasses of 
operators.

To the best of our knowledge, the first method appeared in \cite{bochner1949}, where Bochner introduced subordination for generators of stochastic processes, see also 
\cite{feller1952}.
%Apparently he also used first the term subordination which is why the process is today usually refered to as 
The corresponding method is nowadays called subordination in the sense of Bochner, see 
\cite[Chapter 13]{schilling2012} for a thorough treatment. 

In terms of operator theory, this can be used to obtain fractional powers of an operator $A$ when $-A$ is the generator of a bounded $C_0$-semigroup.
The idea was subsequently extended in \cite{balakrishnan1959, hille1948, phillips1952} to what now is called the Hille-Phillips calculus.  

A.~V.~Balakrishnan further extended the construction of fractional powers in 1960 in \cite{balakrishnan1960} to the wider class of non-negative operators, 
a term coined by Komatsu in \cite{komatsu1969}. 
In the context of Banach spaces these are the linear operators having $(-\infty,0)$  contained in their resolvent set and fulfilling an additional resolvent estimate. 
In Banach spaces these operators coincide with sectorial operators as introduced in \cite{haase2006, kato1960} and differs slightly from the ones used for example in 
\cite{EngelNagel2000} or \cite{kato1980}. 
Especially generators of bounded $C_0$-semigroups are non-negative (and sectorial respectively). 
Further, in more general locally convex spaces non-negative operators are a strictly greater class (cf. \cite{martinez2001}). 

From a today's point of view Balakrishnan's construction is part of the modern calculus of sectorial operators, see e.g.\ \cite{delaubenfels1993, mcintosh1986}. 
A very detailed study of the entire topic is available in the great book \cite{haase2006}. 

In 1968 ideas on how to describe fractional powers of the Laplacian via extensions appeared in the context of stochastic processes (see \cite{molchanow1968}) but in this 
work focus was not on the fractional powers themselves. 
Namely the authors studied for $\alpha \in (0,1)$ the one-dimensional process generated by the operator
\[
 L_{\alpha} = \frac{\d^2}{\d t^2} + \frac{1-2\alpha}{t} \frac{\d}{\d t}. 
\]
Formally one can think of this process as the magnitude of a Brownian motion in $2 - 2 \alpha$-dimensions, since for an $n$-dimensional Brownian motion $(B_t)$ the process $(J_t)$ given by
$J_t := \norm{B_t}$, called a Bessel process of order $n$, is generated by the operator
\[
 L_{\frac{2-n}{2}} = \frac{\d^2}{\d t^2} + \frac{n - 1}{t} \frac{\d}{\d t}
\]
(sometimes with an additional factor $1/2$). 
Using the local time of the Bessel process the authors constructed a time change for an independent Brownian motion which yields the subordinated processes. 
The fact that this observation is actually independent of the special example of a Brownian motion seems to be noticed for the first time in \cite{nelson1958}. 

The approach was rediscovered 40 years later from the PDE point of view in the influential paper of Caffarelli and Silvestre \cite{caffarelli2007} where the authors described fractional powers of the Laplacian by means 
of taking traces of functions solving the PDE
\begin{equation}
\begin{aligned}
 \partial_t^2 u(t,x) + \frac{1-2\alpha}{t} \partial_t u(t,x) & =  - \Delta_x u (t,x), \quad & (t,x) \in (0, \infty) \times \R^n, \label{fractional_laplace} \\
 u(0,x) & = f(x), \quad & x \in \R^n,
\end{aligned} 
\end{equation}
with $\alpha \in (0,1)$ being the fractional power.
Formally one could interpret solutions to  \eqref{fractional_laplace} as harmonic functions defined on $\R^{n}\times\R^{2-2\alpha}$. 
In this case the equation \eqref{fractional_laplace} is nothing but the usual Laplacian applied to a function $v$ of the special form 
\begin{align*}
  v: \R^{n}\times\R^{2-2\alpha} \rightarrow \R, \quad  v(x,y) = u \left(\norm{y},x \right),  
\end{align*}
with a suitable function $ u:\R\times \R^{n} \rightarrow \R$. 
So it just depends on the norm of the additional $2-2\alpha$ coordinates. 
This may be the reason why the technique is sometimes also referred to as harmonic extension approach.
The here sketched idea is used nicely in \cite{caffarelli2007} to obtain results analogously to the ones available for the Laplacian in $\R^n$. 

Using a solution $u$ to \eqref{fractional_laplace}, one can then calculate $\left( - \Delta_x \right)^{\alpha}$ as
\[
 c_{\alpha} \bigl( ( - \Delta_x )^{\alpha}f \bigr)(x) = - \lim\limits_{t \rightarrow 0+} t^{1-2\alpha} \partial_t u(t,x), \qquad x\in\R^n,
\]
with an explicitly known constant $c_{\alpha}$ which was calculated for the first time in \cite{stinga2010} and a solution $u$ of \eqref{fractional_laplace}.  

One may generalise Equation \eqref{fractional_laplace} by replacing $-\Delta$ with a more general sectorial operator $A$ in some Banach space $X$. 
Now \eqref{fractional_laplace} becomes 
\begin{align}
 & u''(t) + \frac{1 - 2 \alpha}{t} u'(t) = Au(t), \quad t \in (0, \infty), \label{fractional_ODE} \\
 & u(0) = x, \label{dirichlet}
\end{align}
i.e.\ a linear ODE in the Banach space $X$ with initial datum $x \in X$ which degenerates for $t=0$ (unless $\alpha = 1/2$) and which is incomplete since no initial condition 
for $u'$ is given.
The case $\alpha = 1/2$ was already studied in \cite{balakrishnan1960} with the result that under the additional assumption of $u$ being bounded the problem has the 
unique solution given by
\[
 u(t) = \exp^{-t \sqrt{A}}x,
\]
i.e.\ is given by the analytic semigroup generated by $-\sqrt{A}$ (for more details including precise definitions the reader may consult \cite{haase2006} and 
\cite{martinez2001} or see below for the short introduction provided later in this paper). 
 
It was then noticed in \cite{stinga2010} that if the considered Banach space $X$ is one-dimensional, Equation \eqref{fractional_ODE} is another form of the Bessel differential equation and integral representations of its 
solutions provide solutions to \eqref{fractional_ODE} by interpreting them on the operator level. 
The underlying theory is the sectorial functional calculus (\cite{haase2006, isem21}) and the fact that the appearing functions can be approached within this framework. 

Assuming $u$ to be a bounded solution to \eqref{fractional_ODE} in the one-dimensional case it follows that $u$ is unique and that 
\begin{equation}
 - \lim\limits_{t \rightarrow 0+} t^{1-2\alpha} u'(t) = c_{\alpha} A^{\alpha}x.   \label{limit}
\end{equation}

Interpreting the limit on the one hand as generalised Neumann boundary condition ($u(0)=x$ is to be seen as Dirichlet boundary condition) we may think of the limit as generalised Dirichlet-to-Neumann operator which is given by
the fractional power of the sectorial operator appearing in the ODE. 
The question arises whether the solution $u$ is also unique in a general Banach space (that would mean that the Dirichlet-to-Neumann operator can be defined) and if this operator is up to a constant still given by a fractional 
power. 

In \cite{stinga2010} Stinga and Torrea studied this question in a Hilbert space $X$ for a sectorial selfadjoint operator $A$. 
Denoting the Dirichlet-to-Neumann operator (the limit in \eqref{limit}) by $T_{\alpha}$ they showed $T_{\alpha} = c_{\alpha} A^{\alpha}$ (without assuming $A$ to be injective) 
and the solution $u$ they used to construct $T_{\alpha}$ is unique if $A$ has pure point spectrum. 

The result was extended by Stinga in joint work with Gal{\'e} and Miana in \cite{gale2013} to general Banach spaces and actually also even to a bigger
class of operators. 
Namely in this work the authors considered generators of $\beta$-times integrated tempered semigroups and cosine families. 
Also it was noted that the ODE can be considered on an entire sector rather than just the positive halfline. 
With quite involved techniques the authors constructed solutions to the ODE and showed correspondence of the Dirichlet-to-Neumann operator constructed from the aforementioned
solution with a fractional power of the generator on the domain of the generator. 
Here it was noticed that the limit \eqref{limit} may be considered on an entire subsector of the domain of the solution. 
Also the paper allowed for complex powers being trapped in a strip. 
A complete uniqueness result for the solution is missing though. 

In \cite{arendt2016} the authors considered again the situation when $X$ is a Hilbert space and made use of form techniques to study the situation. 
In particular, they proved the well-posedness of the Dirichlet problem \eqref{fractional_ODE} for initial data as in \eqref{dirichlet} and showed that the domain of the 
Dirichlet-to-Neumann operator is a subspace of a complex interpolation space between $X$ and a dense subspace $V$ of it which contains $\mathcal{D}(A)$. 
The considered operator $A$ is m-accretive, i.e.\ sectorial with $M=1$, see Definition \ref{sectops} for more details.  
Such operators have bounded imaginary powers (\cite[Corollary 7.1.8]{haase2006}). 
By \cite[Theorem 11.5.4]{martinez2001} the domains of the fractional powers of these operators coincide with the complex interpolation spaces between $\mathcal{D}(A)$ and $X$ 
for real powers $\alpha$.
The question whether these interpolation spaces coincide with the domains of the Dirichlet-to-Neumann operator in the here treated sense was not completely clarified. 
Also the uniqueness result just holds in the coercive case.  

Our contribution is as follows. 
For a given sectorial densely defined operator $A$ on some general Banach space $X$ we construct a solution to the ODE \eqref{fractional_ODE} fulfilling the initial condition 
\eqref{dirichlet} which turns out to be holomorphic in some sector determined by the angle of sectoriality of $A$ and which is bounded on all closed subsectors 
(Theorem \ref{thm:existence_of_cesp_solution}).
This solution is the unique solution to the initial value problem by Theorem \ref{thm:uniqueness_of_cesp_solution}. 
If one uses it to define a Dirichlet-to-Neumann operator then this operator is up to the already known constant $c_{\alpha}$ given by the fractional power $A^{\alpha}$ 
(Theorem \ref{thm:correspondence_DtN_frac_power}). 
In a nutshell, we make use of the fact that $\sqrt{A}$ is sectorial with angle of sectoriality less than $\frac{\pi}{2}$ and thus gives rise to a holomorphic $C_0$-semigroup as well as a rich functional calculus.

The paper is organised in the following way. 
In the next section we will give a rather short introduction to the basics of sectorial operators on Banach spaces and the related sectorial functional calculus. 
The third section introduces holomorphic functions derived from modified Bessel functions which will turn out to be in the domain of the sectorial calculus of $A$ and which 
will be used for all results. 
The sections four and five deal with existence and uniqueness of bounded solutions to \eqref{fractional_ODE}, \eqref{dirichlet}.
We then relate the corresponding Dirichlet-to-Neumann operator of this solution with fractional powers of the operator $A$, and finish the paper with a short example.

\section{Sectorial Operators and Sectorial functional calculus}

Let $X$ be a Banach space over the field of complex numbers.

We denote by $L(X)$ the set of all bounded linear operators from $X$ to $X$ and 
\[
\rho(A) := \left\{ \lambda \in \C \mid (\lambda - A) \text{ is bijective} \right\}
\]
is the resolvent set of a closed linear operator $A$ in $X$, while $\sigma(A) := \C \setminus \rho(A)$ denotes its spectrum. 
\begin{defi}[Sectorial operator]
Let $A$ be a linear operator in $X$. 
Then $A$ is called \emph{sectorial} if $(- \infty, 0) \subseteq \rho\left(A\right)$ and
\[
 M := \sup\limits_{\lambda > 0} \norm{ \lambda \left( \lambda + A \right)^{-1}} < \infty.
\]
\label{sectops}
\end{defi}
In general we neither require $0 \in \rho(A)$ nor $\overline{\mathcal{D}(A)} = X$. 
Note that $M \in [1, \infty)$, see e.g.\ \cite[Corollary 1.1.4]{martinez2001}.
 
For $z \in \C \setminus (-\infty, 0]$ we define $\arg z$ to be the unique number in the interval 
$(-\pi, \pi)$ such that $z = \abs{z} \exp^{i \arg z}$. 
For $\theta \in (0, \pi)$ we denote by 
\[
	S_{\theta}:= \left\{ z \in \C \setminus (-\infty, 0]  
	                     \mid \abs{\arg z } < \theta \right\} 
\]
the open sector with vertex $0$ and opening angle $\theta$. 
Further we set $S_0 := (0, \infty)$. 

Let $A$ be a sectorial operator. Then there exists $\theta \in [0, \pi)$ such that $\sigma(A) \subseteq \overline{S_{\theta}}$ and for all 
$\phi \in (\theta, \pi)$ there exists $\widetilde{M}_{\phi} \geq 1$ such that $\sup_{z \in \C \setminus S_\phi} \norm{z(z-A)^{-1}} \leq \widetilde{M}_{\phi}$, 
see e.g.\ \cite[Proposition 1.2.1]{martinez2001}. 
The infimum of all angles $\theta$ such that $\sigma(A) \subseteq \overline{S_{\theta}}$ holds is called its 
\emph{angle of sectoriality} and denoted by $\omega$. 
By $\Sec _{\omega}(X)$ we denote the set of all sectorial operators with angle of sectoriality $\omega$.
Moreover, also $A+s$ is sectorial for all $s\geq 0$, and boundedly invertible for $s>0$.
Also, if $A$ is injective its inverse $A^{-1}$ is again a sectorial operator.  
%Operators such that $A + s$ is sectorial for some $s \in \R$ are called \emph{quasi-sectorial}. 

If $-A$ is the generator of a bounded $C_0$-semigroup $(\exp^{-tA})$ then $A$ is a sectorial operator with angle of sectoriality $\omega \leq \pi /2$. 
This follows from Laplace transform. 
Such semigroups are analytic on $S_{\pi/2-\omega}$ if and only if the angle of sectoriality $\omega$ is strictly less than $\pi /2$.
% In the following, $\left( \exp^{-zA} \right)_z$, will always 
% denote the semigroup generated by the sectorial operator $A$ (actually by its negative $-A$).   

Sectorial operators admit a powerful functional calculus, i.e.\ we can construct new linear operators by taking suitable holomorphic functions of a given sectorial operator.
%plugging in a given operator and a broad class of holomorphic functions. 
One may consult \cite{haase2006} and \cite{isem21} for an introduction and terminology. 

Let $G \subseteq \C$ be an open but possibly unbounded subset of the complex plane. 
By $\Hol(G)$ we shall denote the set of all holomorphic functions on $G$. 
We may turn them into a Fr\'echet space by equipping them with the topology of uniform convergence on compacts. 
For the calculus especially functions defined on a sector arbitrarily bigger than the one containing the spectrum of the considered operator $A$ will be important. 
Therefore we consider
\[
	\Hol [S_{\omega}] := \bigcup_{\varphi > \omega} \Hol (S_{\varphi})
\] 
which may be interpreted as the inductive limit of the Fr\'echet spaces $\Hol (S_{\varphi})$ for $\varphi>\omega$. 
By $\Hol^{\infty} (G)$ we denote the set of bounded holomorphic functions equipped with the norm topology.
This is a Banach space which continuously embeds in $\Hol (G)$. 
Similar to above we shall consider $\Hol^{\infty}[S_{\omega}]$, the inductive limit of the Banach spaces $\Hol^{\infty}(S_{\varphi})$ for $\varphi>\omega$.  
The set of \emph{elementary functions} $\mathcal{E} [S_{\omega}]$ consists of all $f \in \Hol^{\infty}[S_{\omega}]$ such that
\begin{equation}
	\exists \varphi > \omega \, \forall \, 0 \leq \alpha < \varphi: \, \int\limits_{0}^{\infty} \abs{f \left(s \exp^{i\alpha} \right)} \frac{\d s}{s} < \infty. 
	\label{int_cond}
\end{equation}
For $\omega \in [0, \pi)$, $A \in \Sec_{\omega}(X)$ and $f \in \mathcal{E} [S_{\omega}]$ one can define $f(A) \in L(X)$ via
\begin{equation}
	f(A) := \frac{1}{2 \pi i} \int_{\gamma} f(\lambda) (\lambda - A)^{-1} \d \lambda,
	\label{def:elem_fcts}
\end{equation}
with $\gamma$ given by the boundary of a sector of angle $\varphi>\omega$ such that $f$ is defined on the path $\gamma$. 
\begin{figure}[htb]
    \centering
    \begin{tikzpicture}[>=stealth]
      \draw[->] (-0.5,0.0)--(4.0,0.0) node[right]{$\mathrm{Re}$};
      \draw[->] (0,-2.0)--(0.0,2.0) node[above]{$\mathrm{Im}$};
      \draw (3,1.8)--(0.0,0.0);
      \draw[->] (3.0,1.8)--(1.5,0.9);
      \draw (0.0,0.0)--(3.0,-1.8);
      \draw[->] (0.0,0.0)--(1.5,-0.9);
      \draw (3.0,-1.3) arc(-90:-270:1.3);
      \draw (3.0,0.35) node{$\sigma(A)$};
      \draw (1.166190379,0.0) arc(0:30.96375653:1.166190379);
      \draw (0.8,-0.05) node[above]{$\phi$};
      \draw (3.0,-1.8) node[right]{$\gamma$};
    \end{tikzpicture}  
  \end{figure}
A standard argument shows that the definition is independent of the particular choice of the angle $\varphi$. 
A sufficient criterion for $f \in \mathcal{E}[S_{\omega}]$ is that $f \in \Hol^{\infty}[S_{\omega}]$ and that $f$ has polynomial limit $0$ at the origin and at infinity, i.e.
\[
	\exists C, \, \alpha> 0: \, \abs{f(z)} \leq C \cdot \min \{ \abs{z}^{\alpha}, \, \abs{z}^{-\alpha} \} \text{ on some sector } S_{\varphi}, \, \varphi > \omega.  
\]
So $f$ tends to $0$ at the origin and at infinity with a polynomial rate (see also \cite[p. 27 and Lemma 2.2.2.]{haase2006}). 
For such functions the integrability condition \eqref{int_cond} is satisfied. 

If the operator $A$ is not injective one needs to extend the algebra $\mathcal{E}[S_{\omega}]$ to
\[
	\mathcal{E}^{\text{ext}}[S_{\omega}] := \mathcal{E}[S_{\omega}] \oplus \operatorname{Lin}\bigl( z \mapsto (1+z)^{-1} \bigr) \oplus 
	\operatorname{Lin} \bigl( z \mapsto 1 \bigr).  
\]
For $f \in \mathcal{E}^{\text{ext}}[S_{\omega}]$ one has $f(z) = g(z) + c(1+z)^{-1} + d$ with $g \in \mathcal{E}[S_{\omega}]$, $c,d \in \C$ and we extend 
Definition \eqref{def:elem_fcts} to
\[
 f(A) := g(A) + c(1+A)^{-1} + d \in L(X). 
\]
Similarly to the case of $\mathcal{E}[S_{\omega}]$ a sufficient condition for $f \in \Hol^{\infty}[S_{\omega}]$ to be in $\mathcal{E}^{\text{ext}}[S_{\omega}]$ is the existence of limits at $0$ and at infinity which are approached
at a certain polynomial rate. 

A subset $\mathcal{N} \subseteq \mathcal{E}^{\text{ext}}[S_{\omega}]$ is said to be an \emph{anchor set} (\cite[p. 115]{isem21}) if
\[
 \bigcap\limits_{e \in \mathcal{N}} N(e(A)) = \{0\}
\]
where $N(B)$ denotes the null space of an operator $B$. 
If it happens that $\mathcal{N} = \{e\}$ for some single $e \in \mathcal{E}^{\text{ext}}[S_{\omega}]$ we call this $e$ an \emph{anchor element}. 
Anchor sets allow for a further extension of our calculus towards closed but in general unbounded operators. 
For this purpose we denote by $\Mer[S_\omega]$ the `germs' of meromorphic functions on the sector $S_\omega$. 
For $f \in \Mer [S_{\omega}]$ we consider 
$\mathcal{N}_f := \{ e \in \mathcal{E}^{\text{ext}}[S_{\omega}] \mid ef \in \mathcal{E}^{\text{ext}}[S_{\omega}]\}$ which is called the set of regularizers of $f$.  
If $\mathcal{N}_f$ is an anchor set we may define $f(A)$ by means of
\[
 x \in \mathcal{D}\bigl( f(A) \bigr)  \quad \textrm{and} \quad f(A)x := y \quad :\Leftrightarrow \quad \exists y \in X \, \forall e \in \mathcal{N}_f: \, e(A)y = (ef)(A)x
\]
and the fact that this is well-defined follows from the property that $\mathcal{N}_f$ is an anchor set. 
Especially if $f$ has an injective regularizer $e$ this just means
\[
 f(A) = e(A)^{-1} (ef)(A). 
\]
In this situation $\mathcal{D}\bigl( e(A)^{-1} \bigr) \subseteq \mathcal{D}\bigl(f(A)\bigr)$ by general functional calculus principles (see the Definition \cite[p. 31]{isem21} and \cite[Theorem 7.5]{isem21}). Let $\Mer_A:=\bigl\{f\in \Mer[S_\omega] \mid \mathcal{N}_f\,\text{is an anchor set}\bigr\}$. 
Note that for $A,B \in \Sec_{\omega}(X)$ and $f \in \mathcal{E}^{\textrm{ext}}[S_{\omega}]$, the bounded operators $f(A)$ and $f(B)$ can be defined 
(it just depends on the angle of sectoriality $\omega$) while $f \in \Mer_A$ does not necessarily imply $f \in \Mer_B$ and vice versa.

% At this point we could discuss further extensions of the calculus (\cite[p. 173]{isem21}) but by now we already have enough to suit the purposes of what comes which is why we 
% would stop at this point. 

Before we come to examples we want to state the so-called composition rule:

\begin{thm}[Composition rule, see {\cite[Theorem 2.4.2]{haase2006}}]
 Let $\omega \in [0, \pi)$, $A \in \Sec_{\omega}(X)$, $g \in \Mer_A$ such that $g(A) \in \Sec_{\omega'}(X)$ for some $\omega' \in [0, \pi)$ and for every $\varphi' \in (\omega', \pi)$ there is $\varphi \in (\omega, \pi)$ with
 the property that $g(S_{\varphi}) \subseteq \overline{S_{\varphi'}}$. Then
 \[
  \forall f \in \Mer_{g(A)}: \, f \circ g \in \Mer_A \text{ and } (f \circ g)(A) = f (g(A)). 
 \]
 \label{thm:comp_rule}
\end{thm}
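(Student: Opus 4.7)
The plan is to establish the identity first for elementary $f$ and then lift it to meromorphic $f$ via regularizers. The geometric hypothesis $g(S_\varphi)\subseteq \overline{S_{\varphi'}}$ is what makes the contour manipulations legitimate, so every step will revolve around choosing a pair $(\varphi,\varphi')$ compatibly with this inclusion.

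First, suppose $f\in\mathcal{E}[S_{\omega'}]$ (or, more generally, $f\in\mathcal{E}^{\text{ext}}[S_{\omega'}]$, handling the constant and $(1+\cdot)^{-1}$ summands separately). Fix $\varphi'\in(\omega',\pi)$ on which the integrability condition for $f$ holds, and choose $\varphi\in(\omega,\pi)$ as granted by the hypothesis so that $g(S_\varphi)\subseteq\overline{S_{\varphi'}}$. Then for every $\lambda\in\partial S_{\varphi''}$ with $\omega'<\varphi''<\varphi'$, the function $r_\lambda(z):=(\lambda-g(z))^{-1}$ is well-defined and holomorphic on $S_\varphi$, and one checks from the resolvent estimate on $g(A)$ and the sectoriality of $A$ that $r_\lambda\in\mathcal{E}^{\text{ext}}[S_\omega]$ with $r_\lambda(A)=(\lambda-g(A))^{-1}$ (this is the step where one uses that $g(A)\in\Sec_{\omega'}(X)$ and that the calculus of $A$ reproduces resolvents of $g(A)$). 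Substituting this into the defining Cauchy integral
\[
f(g(A))=\frac{1}{2\pi i}\int_{\partial S_{\varphi''}}f(\lambda)\,(\lambda-g(A))^{-1}\,\d\lambda
\]
and expanding $(\lambda-g(A))^{-1}$ as a second contour integral along $\partial S_\varphi$ in the calculus of $A$, a Fubini exchange (justified by the polynomial decay of $f$ at $0$ and $\infty$ together with the resolvent estimate for $A$) yields
\[
f(g(A))=\frac{1}{2\pi i}\int_{\partial S_\varphi}\Bigl(\frac{1}{2\pi i}\int_{\partial S_{\varphi''}}\frac{f(\lambda)}{\lambda-g(\mu)}\,\d\lambda\Bigr)(\mu-A)^{-1}\,\d\mu.
\]
The inner integral equals $(f\circ g)(\mu)$ by Cauchy's formula applied inside $\overline{S_{\varphi'}}\supseteq g(S_\varphi)$, which identifies the right-hand side with $(f\circ g)(A)$. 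One must also verify that $f\circ g\in\mathcal{E}^{\text{ext}}[S_\omega]$, which follows from the growth control inherited from the sector inclusion.

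Now take a general $f\in\Mer_{g(A)}$, so there is an anchor set $\mathcal{N}\subseteq\mathcal{E}^{\text{ext}}[S_{\omega'}]$ of regularizers for $f$ with respect to $g(A)$. The plan is to show that $\{e\circ g:e\in\mathcal{N}\}\cup\{\text{regularizers for }g\text{ itself}\}$ (adjusted appropriately) regularizes $f\circ g$ with respect to $A$ and anchors in $X$. Using the elementary case already proved, each $e\circ g$ and $(ef)\circ g$ belongs to $\mathcal{E}^{\text{ext}}[S_\omega]$ and satisfies $(e\circ g)(A)=e(g(A))$, $((ef)\circ g)(A)=(ef)(g(A))$. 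The defining relation $e(g(A))\cdot f(g(A))x=(ef)(g(A))x$ on $\mathcal{D}(f(g(A)))$ then translates literally into the relation that defines $(f\circ g)(A)$ via the regularizer $e\circ g$, giving both $f\circ g\in\Mer_A$ and the equality $(f\circ g)(A)=f(g(A))$.

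The main obstacle will be the elementary-case identification of $r_\lambda(A)$ with $(\lambda-g(A))^{-1}$ together with the Fubini step: one needs uniform bounds on $\|r_\lambda(A)\|$ and $\|(\mu-A)^{-1}\|$ along the contours, combined with the decay of $f$ along $\partial S_{\varphi''}$, to legitimize the exchange of integrals. A secondary obstacle is the bookkeeping when $g$ or $f$ has an additive constant or a pole that is absorbed into the $\mathcal{E}^{\text{ext}}$ extension; this forces one to split $f$ as $g_0+c(1+\cdot)^{-1}+d$ and handle the three summands separately, taking care that the inclusion $g(S_\varphi)\subseteq\overline{S_{\varphi'}}$ still permits the contour to avoid $-1$ (if not, the point $\lambda=-1$ must be treated by a residue argument using $-1\in\rho(g(A))$).
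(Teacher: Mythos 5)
The paper does not prove this statement: it is cited verbatim from Haase's book \cite[Theorem~2.4.2]{haase2006}, so there is no internal argument to compare with. Your overall plan — prove the identity for elementary $f$ by inserting the Cauchy-integral representation of $(\lambda-g(A))^{-1}$ into the calculus of $A$ and Fubini, then lift to $\Mer_{g(A)}$ via anchor sets — is the standard route, and you correctly isolate the crucial auxiliary step, namely that the calculus of $A$ reproduces the resolvents of $g(A)$.

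There is, however, a genuine gap in the elementary case. You assert that $r_\lambda=(\lambda-g)^{-1}$ belongs to $\mathcal{E}^{\text{ext}}[S_\omega]$ (and later, implicitly, that $f\circ g\in\mathcal{E}^{\text{ext}}[S_\omega]$). Neither follows from the hypotheses: membership in $\mathcal{E}^{\text{ext}}[S_\omega]$ requires polynomial limits at $0$ and at $\infty$, whereas $g\in\Mer_A$ satisfying the sector-mapping condition need not possess limits there at all — so $r_\lambda$ and $f(g(\cdot))$ are in general only bounded holomorphic functions. What actually holds, and what one needs, is the weaker statement $r_\lambda\in\Mer_A$ with $r_\lambda(A)=(\lambda-g(A))^{-1}$, obtained from the inverse rule of the extended calculus (for $h\in\Mer_A$ with $h(A)$ injective, $h^{-1}\in\Mer_A$ and $h^{-1}(A)=h(A)^{-1}$, applied to $h=\lambda-g$), not from a contour-integral representation of $r_\lambda$ itself. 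As a consequence your double integral is not directly available; one must first multiply through by a fixed regularizer $e_0\in\mathcal{E}[S_\omega]$ with $e_0(A)$ injective (e.g.\ $e_0(z)=z(1+z)^{-2}$), note that $e_0 r_\lambda$ and $e_0\cdot(f\circ g)$ \emph{do} lie in $\mathcal{E}[S_\omega]$ because $e_0$ supplies the decay against a bounded factor, prove $e_0(A)\,f(g(A))=\bigl(e_0\cdot(f\circ g)\bigr)(A)$ by the Fubini argument, and finally cancel $e_0(A)$ by injectivity. The same fix propagates to the lifting step: $\{e\circ g:e\in\mathcal{N}\}$ need not lie in $\mathcal{E}^{\text{ext}}[S_\omega]$, so the anchor set for $f\circ g$ should be $\{e_0\cdot(e\circ g):e\in\mathcal{N}\}$, using $(e\circ g)(A)=e(g(A))$ from the elementary case together with injectivity of $e_0(A)$. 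There is also a bookkeeping slip in your choice of angles: with $\omega'<\varphi''<\varphi'$ and $g(S_\varphi)\subseteq\overline{S_{\varphi'}}$, the image of $g$ may intersect the contour $\partial S_{\varphi''}$, so $r_\lambda$ is not well defined and the inner Cauchy integral does not reduce to $f(g(\mu))$; you must invoke the hypothesis with an auxiliary angle $\varphi'''\in(\omega',\varphi'')$ to get $g(S_\varphi)\subseteq\overline{S_{\varphi'''}}\subsetneq S_{\varphi''}$.
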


\begin{example}
%We continue with two examples for the constructed calculus which are of major importance. 
Let $\omega \in [0, \pi/2)$ and $A\in\Sec_{\omega}(X)$. For $z\in S_{\pi/2 - \omega}$ choose $\varphi\in (\omega,\pi/2)$ such that $z\in S_{\pi/2 - \varphi}$ and consider the function 
$[S_\omega\ni \lambda\mapsto \exp^{-z\lambda}] \in \Mer _A$.
It holds that
\[
 \left[\lambda \mapsto \exp^{-z \lambda} \right](A) = \exp^{-zA}
\] 
where on the right hand side we have an operator of the holomorphic semigroup operator generated by $-A$. 

Since $zA$ is still sectorial we further can consider its functional calculus and it holds that
 \[
  \left[\lambda \mapsto \exp^{-\lambda} \right](zA) = \left[\lambda \mapsto \exp^{-z\lambda} \right](A)
 \]
 which is an instance of the composition rule. 
\end{example}

\begin{example}
Let $\omega \in [0, \pi)$ and $A \in \Sec_{\omega}(X)$. Then for all $\alpha \in \C_{\Re > 0}$ define
\[
	\C \setminus (-\infty,0] \ni z \mapsto z^{\alpha} := \exp^{\alpha \cdot \ln(z)}
\] 
with $[z \mapsto \ln z]$ being the inverse of the biholomorphic function $[z \mapsto \exp^z]$ restricted to the strip 
$\{ z \in \C \mid \abs{\Im (z)} < \pi \}$. 
Note that the function $[z\mapsto z^{\alpha}]$ allows for a continuous extension to
$z=0$ by setting $0^{\alpha} := 0$.  
We have $[\lambda\mapsto \lambda^{\alpha}] \in \Mer_A$ and set 
\[
 A^{\alpha} := \left[\lambda \mapsto \lambda^{\alpha} \right](A). 
\]
If $A$ is further injective this can be generalised to arbitrary $\alpha \in \C$ and it holds that
\[
 A^{-\alpha} = \left(A^{-1}\right)^{\alpha} = \left( A^{\alpha} \right)^{-1}. 
\]
%We will turn in a second to the special case $\alpha \in [0, \pi / \omega)$ but before we would like to mention the composition rule. 
\end{example}

The composition rule can be combined with the fact that for $A \in \Sec_{\omega}(X)$ and $\alpha \in [0, \pi/\omega)$ we have $A^{\alpha} \in \Sec_{\alpha \omega}(X)$ (see \cite[Proposition 3.1.2]{haase2006}). Therefore, 
\[
 \left[ \lambda \mapsto \exp^{-z\lambda} \right] \in \Mer_{\sqrt{A}} \quad \left( z \in S_{\frac{\pi}{2} - \varphi}, \frac{\omega}{2} < \varphi < \frac{\pi}{2} \right).
\] 
The semigroup $\left(\exp^{-z \sqrt{A}} \right)$ is holomorphic and strongly continuous on $\overline{\mathcal{D}(A)}$. 
It follows that $\mathcal{D}(A^{\infty}) := \bigcap_{k \in \N} \mathcal{D}(A^k)$ is dense in $\overline{\mathcal{D}(A)}$ (which is $X$ if we assume $A$ to be densely defined). 
We equip $\mathcal{D}(A^{\infty})$ with a family $(\norm{\cdot}_k)_{k\in\N}$ of seminorms given by $\norm{x}_k := \norm{A^k x}$ such that 
$\mathcal{D}(A^{\infty})$ becomes a Fr\'{e}chet space.

\begin{lemma}
 Let $\omega \in [0, \pi)$ and $A \in \Sec_{\omega}(X)$. 
 Further let $s \geq 0$ and $\delta \in (0, (\pi - \omega)/2)$. 
 Then there exists $\kappa > 0$ such that for all $k\in\N_0$ there exists $C > 0$ with the property that
  \begin{align*}
   \forall z \in S_{\delta}: \norm{ \left( \sqrt{A + s} \right)^k \exp^{-z\sqrt{A+s}}} 
   \leq C \left( 1 + \abs{z}^{-k} \right) \left(1 + \kappa \sqrt{s} \right)^k \exp^{-\kappa \Re z \sqrt{s}}.  
  \end{align*}
\label{lemma:exp_bound}
\end{lemma}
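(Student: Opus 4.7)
The strategy is to exploit the sectorial functional calculus of $B := \sqrt{A+s}$. By the composition rule (Theorem~\ref{thm:comp_rule}), $B \in \Sec_{\omega/2}(X)$, and crucially the sectoriality constants of $B$ are inherited from those of $A+s$, which coincide with those of $A$; in particular they are independent of $s \geq 0$. Moreover, from $\sigma(A+s) \subseteq s + \overline{S_\omega}$ a short geometric argument yields $\sigma(B) \subseteq \overline{S_{\omega/2}} \cap \{|\mu| \geq c_\omega\sqrt{s}\}$ for some $c_\omega > 0$ depending only on $\omega$. This spectral gap of order $\sqrt{s}$ is the source of the exponential factor $\exp^{-\kappa\Re z \sqrt{s}}$.

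Fix $\varphi \in (\omega/2, \pi/2 - \delta)$, which is nonempty by the hypothesis $\delta < (\pi-\omega)/2$, and choose $\kappa > 0$ small enough (depending only on $\omega, \delta, \varphi$) so that $\sigma(B) \subseteq \kappa\sqrt{s} + \overline{S_\varphi}$ for every $s \geq 0$. With the positively oriented contour $\Gamma_s := \partial(\kappa\sqrt{s} + S_\varphi)$---two rays from $\kappa\sqrt{s}$ at angles $\pm\varphi$---the sectorial functional calculus yields, for $k \geq 1$ and $z \in S_\delta$,
\[
B^k \exp^{-zB} = \frac{1}{2\pi i}\int_{\Gamma_s}\mu^k \exp^{-z\mu}(\mu - B)^{-1}\,d\mu.
\]
Parametrising $\mu = \kappa\sqrt{s} + r\exp^{\pm i\varphi}$, three estimates control the integrand: (i) $|\mu|^k \leq (\kappa\sqrt{s}+r)^k$; (ii) $|\exp^{-z\mu}| \leq \exp^{-\kappa\sqrt{s}\Re z}\exp^{-\eta r|z|}$ with $\eta:=\cos(\varphi+\delta)>0$, since $\Re(z\exp^{\pm i\varphi}) \geq |z|\eta$ for $z \in S_\delta$; and (iii) the resolvent bound $\|(\mu-B)^{-1}\| \leq C/(\sqrt{s}+|\mu|)$, obtained via the factorisation $(\mu-B)^{-1} = (\mu + B)(\mu^2 - (A+s))^{-1}$ combined with sectoriality of $A+s$ and a moment-type estimate on $B(\mu^2-(A+s))^{-1}$.

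Putting these together, the upper-ray contribution is bounded by
\[
C\exp^{-\kappa\sqrt{s}\Re z}\int_0^\infty (\kappa\sqrt{s}+r)^{k-1}\exp^{-\eta r|z|}\,dr = C\exp^{-\kappa\sqrt{s}\Re z}\sum_{j=0}^{k-1}\binom{k-1}{j}\frac{(\kappa\sqrt{s})^{k-1-j}\, j!}{(\eta|z|)^{j+1}}.
\]
An elementary case distinction according to whether $\kappa\sqrt{s}|z| \geq 1$ or $\kappa\sqrt{s}|z|<1$ shows that each term $(\kappa\sqrt{s})^{k-1-j}|z|^{-(j+1)}$ is dominated by $(1+|z|^{-k})(1+\kappa\sqrt{s})^k$, yielding the claim for $k \geq 1$ (the lower ray is symmetric). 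The case $k=0$ follows from the boundedness of the analytic semigroup $(\exp^{-zB})_{z \in S_\delta}$, whose bound is independent of $s$, combined with the exponential decay rate determined by the spectral bound of $-B$, via standard decay estimates for holomorphic semigroups.

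The principal technical obstacle is the resolvent bound (iii). On the portion of $\Gamma_s$ lying inside $\overline{S_{\omega/2}}$---which happens near the vertex $\kappa\sqrt{s}$---the sectoriality of $B$ alone does not give a direct estimate on $\|(\mu-B)^{-1}\|$. The factorisation through $A+s$ sidesteps this difficulty by reducing matters to the point $\mu^2$, which, by the calibrated choice of $\kappa$ and $\varphi$, lies safely outside a sector containing $\sigma(A+s)$, so that the familiar sectorial estimate for $A+s$ applies.
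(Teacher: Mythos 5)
Your argument is correct and shares the paper's organising idea: both locate $\sigma(\sqrt{A+s})$ in $\kappa\sqrt{s}+\overline{S_{\varphi}}$ via the spectral mapping theorem, with $\kappa>0$ depending only on $\omega$, and extract the factor $\exp^{-\kappa\Re z\sqrt{s}}$ from this displacement. The two proofs diverge at the semigroup estimate. The paper shifts to $\sqrt{A+s}-\kappa\sqrt{s}$, invokes \cite[Prop.~2.1.1]{lunardi1995} for the shifted generator, and recovers $\bigl(\sqrt{A+s}\bigr)^k$ by the binomial theorem. You instead write $B^k\exp^{-zB}$ (with $B=\sqrt{A+s}$) as a contour integral over $\partial\bigl(\kappa\sqrt{s}+S_\varphi\bigr)$ and supply the resolvent bound $\norm{(\mu-B)^{-1}}\leq C/(\sqrt{s}+\abs{\mu})$ through the factorisation $(\mu-B)^{-1}=(\mu+B)\bigl(\mu^2-(A+s)\bigr)^{-1}$, which reduces everything to the sectoriality of $A$; the final sum over $j$ then plays the role of the binomial expansion. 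Your variant is more self-contained and makes the $s$-uniformity of every constant transparent (the paper tacitly relies on the Lunardi constants being uniform over the family $\{\sqrt{A+s}-\kappa\sqrt{s}\}_{s\geq 0}$), at the cost of the moment-type bound on $\sqrt{A+s}\,\bigl(\mu^2-(A+s)\bigr)^{-1}$ near the vertex of the contour --- which you rightly flag as the delicate point, and which the interpolation inequality $\norm{(A+s)^{1/2}x}^2\leq C\,\norm{(A+s)x}\,\norm{x}$ does settle --- together with a separate argument for $k=0$.
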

	
\begin{proof}
	Since $A \in \Sec_{\omega}(X)$, we have $A + s\in\Sec_{\omega}(X)$ for every $s \geq 0$ and hence 
	$\sqrt{A+s} \in \Sec_{\omega/2} (X)$ by the composition rule. 
	Therefore, $- \sqrt{A+s}$ generates a bounded analytic semigroup in $S_{(\pi - \omega)/2}$. 
	Now let $\lambda \in s + S_{\omega}$ for some $s \geq 0$. 
	We distinguish two cases. 
	Assume first $\omega \in (\pi/2, \pi)$. 
	Then
	\begin{align*}
		\Re \left( \lambda^{\frac{1}{2}} \right) & = \abs{\lambda}^{\frac{1}{2}} \cos \left( \frac{\arg (\lambda)}{2} \right) 
		\geq \abs{\lambda}^{\frac{1}{2}} \cos \left( \frac{\omega}{2} \right) \\
		& \geq \left( s \cdot \sin(\omega) \right)^{\frac{1}{2}} \cos \left( \frac{\omega}{2} \right) 
		= s^{\frac{1}{2}} \sin(\omega)^{\frac{1}{2}} \cos \left( \frac{\omega}{2} \right). 
	\end{align*}
	In the considered case set $c := \sqrt{\sin(\omega)}\cos(\omega / 2)$.
	\par
	In case one has $\omega \in [0, \pi / 2]$ the estimates become
	\[
	 \abs{\lambda}^{\frac{1}{2}} \cos \left( \frac{\omega}{2} \right) \geq s^{\frac{1}{2}} \cos \left( \frac{\omega}{2} \right) 
	\] 
	with $c:= \cos \left( \omega / 2 \right)$ in this case. 
	In both cases one has $c > 0$. 
    Let $f: \C \setminus (-\infty,0) \to \C$ be defined by $f(z) := \sqrt{z}$. 	
	Since $s + S_{\omega} \subsetneq S_{\omega}$ we obtain $f \left( s + S_{\omega} \right) \subseteq S_{\omega/2}$. 
	This together with the above inequality implies  
	$f \left( s + S_{\omega} \right) \subseteq S_{\omega/2} \cap \C_{\Re > c \sqrt{s}}$. 
	Choose $\kappa := c/2$, $\phi := \arctan (2 \cdot \tan(\omega/2))$. 
	Then $f \left( s + S_{\omega} \right) \subseteq \kappa \sqrt{s} + S_{\phi}$. 
	\begin{figure}[htb]
    \centering
    \begin{tikzpicture}[>=stealth]
      \draw[->] (-0.5,0)--(5.2,0) node[right]{$\mathrm{Re}$};
      \draw[->] (0,-0.5)--(0,4.1) node[above]{$\mathrm{Im}$};
      \draw (0,0)--(4.5,3.75);
      \draw (0,0)--(4.5/7.5,-0.5);
      \draw (1.30170825798,0) arc(0:39.8054584:1.30170825798);
      \draw (0.9,0.35) node{$\frac{\omega}{2}$};
      \draw[fill=black] (1.6,0.0) circle (1pt) node[below]{$\kappa \sqrt{s}$};
      \draw (3.2, 4.0) -- (3.2,-0.5);
      \draw[fill=black] (3.2,0.0) circle (1pt) node[below right]{$c \sqrt{s}$};
      \draw (2.6,0) arc(0:59.0361:1);
      \draw (2.3,0.35) node{$\phi$};
      \draw (1.6,0.0) -- (1.6+4.66477*0.514498, 0.0+4.66477*0.857492);
      \draw[pattern=north east lines] plot [smooth,tension=1] coordinates {(5,3.5) (4.5,3.2) (3.5,2) (4.5,0.5) (5,0.2)};
      \draw (4.9, 1.5) node[right]{$\sigma \left( \sqrt{A+s} \right)$};
    \end{tikzpicture}  
  \end{figure}
	\\
	Applying the spectral mapping theorem \cite[Theorem 2.7.8]{haase2006} we conclude that
	$\sigma \left( \sqrt{A+s} \right) \subseteq \kappa \sqrt{s} + S_{\phi}$. 
	Using \cite[Prop. 2.1.1]{lunardi1995} (which also holds for $z \in S_{\delta}$) it follows that for every $l \in \N_0$ there is $M_l > 0$ such that
	\[
	 \norm{z^l \left( \sqrt{A+s} - \kappa \sqrt{s} \right)^l \exp^{-z \sqrt{A+s}}} \leq M_l \exp^{-\kappa \Re(z) \sqrt{s}}. 
	\]
	This inequality with the aid of the binomial theorem finally results in
	\begin{align*}
	 \norm{\sqrt{A+s}^k \exp^{-z\sqrt{A+s}}} & \leq \sum\limits_{l=0}^k \binom{k}{l} 
	 \norm{\left(\sqrt{A+s} - \kappa \sqrt{s} \right)^l \exp^{-z\sqrt{A+s}}} \left(\kappa \sqrt{s} \right)^{k-l} \\
	 & \leq C \left(1 + \abs{z}^{-k} \right) \left(1 + \kappa \sqrt{s} \right)^k \exp^{-\kappa \Re (z) \sqrt{s}}
	\end{align*}
	with $C:= \sum_{l=0}^k \binom{k}{l} M_l$. 
\end{proof}

\section{Important holomorphic functions}

%Occasionally in this work we shall identify a given function $f$ with $f(z)$, the image of some 
%arbitrary $z \in \mathcal{D}(f)$ in order to ease notion. 
%For example we will refer to the function $\exp^{z}$ and mean $z \mapsto \exp^z$. 
%Different letters will be used for the variables in our functions but especially when we aim to plug in
%sectorial operators $\lambda$ shall be used in order to emphasise such points. 

Two functions which will play an important role in what comes are the modified Bessel functions 
$I_{\alpha}$ for $\alpha \in \C$ and $K_{\alpha}$ for $\alpha\in\C$ with $\Re \alpha\notin \Z$, both defined on $\C \setminus (-\infty,0]$ by
\begin{align*}
	I_{\alpha}(z) & := \left( \frac{z}{2} \right)^{\alpha} \sum\limits_{k=0}^{\infty} 
	\frac{z^{2k}}{4^k \cdot k! \cdot \Gamma(\alpha+k+1)},\\
	K_{\alpha}(z) & := \frac{\pi}{2 \sin (\alpha \pi)} \big( I_{-\alpha}(z) - I_{\alpha}(z) \big),
\end{align*}
where $\Gamma$ is the Gamma-function.

\begin{defi}
	Let $\alpha \in \C_{-1< \Re < 1} \setminus i \R$, $\omega \in [0, \pi /2)$, $z \in S_{\pi /2 - \omega}$ and $\delta \in (0, \pi / 2 - \omega - \abs{\arg z})$. 
	Define the function $u_z\from S_{\omega + \delta} \to \C$ by
	\begin{equation}
		u_z(\lambda) := 
		\begin{cases} 
		\frac{1}{2^{\alpha-1} \Gamma (\alpha)} (\lambda z)^{\alpha} K_{\alpha} (\lambda z) & \quad , \, \Re \alpha >0, \\
		\frac{1}{2^{-\alpha-1} \Gamma (-\alpha)} (\lambda z)^{-\alpha} K_{-\alpha} (\lambda z) & \quad , \, \Re \alpha <0. 
		\end{cases}
		\label{eq:def_u}	
	\end{equation}
\end{defi}

Note that we are actually interested in $\alpha\in\C$ such that $\Re \alpha \in (0,1)$ but extending the definition to $\alpha$ such that $\Re \alpha \in (-1,0)$ will be 
useful.
Further, note that if a statement on $u_z(\lambda)$ is true for $\Re \alpha\in (0,1)$, it automatically holds also for the case $\Re\alpha\in(-1,0)$ if one replaces every
$\alpha$ appearing in a statement by $-\alpha$.             
In the following lemma we will prove some properties of $u_z$. 

\begin{lemma}
	Let $\alpha \in \C_{0< \Re < 1}$, $\omega \in [0, \pi /2)$, 
	$z \in S_{\pi /2 - \omega}$ and $\delta \in (0, \pi / 2 - \omega - \abs{\arg z})$. Then
	\begin{enumerate}[label=(\alph*)]\itemsep3pt
		\item\label{item:1}
 		 We have $u_z\in \Hol\bigl(S_{\omega+\delta}\bigr)$. In particular, $u_z \in \Hol[S_\omega]$.

		\item\label{item:2}
		 We have
		    \[
				u_z(\lambda) - 1 = \mathcal{O}(\lambda ^{2 \alpha}) \quad \text{as } 
				\lambda \to 0 \text{ in } S_{\omega + \delta},
			\]			
			i.e.\ $u_z$ has polynomial limit $1$ at $\lambda = 0$. 
			\label{2nd_point}			
		\item\label{item:3}
			For $\lambda\in S_{\omega}$ we have $[z \mapsto u_z(\lambda)] \in \Hol(S_{\pi / 2 - \omega})$	and	
			\[
             \lim\limits_{\substack{z \to 0, \\ z \in S_{\frac{\pi}{2} - \omega}}} u_z(\lambda) = 1.  
			\]
		
		\item\label{item:4}
		 	It holds that
		 	\[
		 	 \lim\limits_{\substack{z \to 0, \\ z \in S_{\frac{\pi}{2} - \omega}}}  - z ^{1-2\alpha} \frac{\d}{\d z} u_z(\lambda) = c_{\alpha} \lambda^{2\alpha}
		 	\]
		 	with $c_{\alpha} = \frac{\Gamma (1-\alpha)}{2^{2\alpha-1} \Gamma (\alpha)}$.
		 	\label{4th_point}
		 	
		\item\label{item:5}
		    For $z \in S_{\pi/2 - \omega}$ and $\lambda\in S_{\omega+\delta}$ we have
		      \begin{equation}
			      u_{z}(\lambda) = \frac{z^{2 \alpha}}{2 \cdot \Gamma (2 \alpha)} \int\limits_0^{\infty} s^{\alpha - \frac{1}{2}} 
			      \exp^{-z \sqrt{\lambda^2 + s}} \left(\lambda^2 + s \right)^{-\frac{1}{2}} \d s. 
			      \label{eq:integralrepresentation_u}
		      \end{equation} 
		\item\label{item:6}
		      For $z \in S_{\pi/2 - \omega}$ there exist $C_1,C_2>0$ such that for all $\lambda\in S_{\omega + \delta}$ we have
			\[
		      \abs{u_z(\lambda)} \leq C_1 \exp^{-C_2 \abs{\lambda}}. 
			\]
			\label{6th_point}
			If further, for some $0 < \varepsilon < \pi/2 - \omega$ and $R > 0$, one has $z \in S_{\varepsilon}$, $\abs{z} < R$ one can choose $\delta$ and $C_3$ uniformly 
			such that $\abs{u_z(\lambda)} \leq C_3$. 
		\item\label{item:7}
			We have $u_z \in \Hol^{\infty}[S_{\omega}]$.			
		\item\label{item:8}
			We have	$u_z \in \mathcal{E}^{\text{ext}}[S_\omega]$.
	\end{enumerate}
	\label{lemma:properties_of_u}
\end{lemma}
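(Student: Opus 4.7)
The backbone of the argument is two complementary representations of $u_z$. From the series defining $I_{\pm\alpha}$, the formula $K_\alpha=\frac{\pi}{2\sin(\alpha\pi)}(I_{-\alpha}-I_\alpha)$, and the reflection identity $\Gamma(\alpha)\Gamma(1-\alpha)\sin(\alpha\pi)=\pi$, one obtains
\begin{equation*}
u_z(\lambda) = \Gamma(1-\alpha)\sum_{k=0}^\infty \frac{(\lambda z)^{2k}}{4^k k!\,\Gamma(k+1-\alpha)} - \frac{\Gamma(1-\alpha)}{2^{2\alpha}}(\lambda z)^{2\alpha}\sum_{k=0}^\infty \frac{(\lambda z)^{2k}}{4^k k!\,\Gamma(k+1+\alpha)}.
\end{equation*}
This single expression already settles \ref{item:1}--\ref{item:4}. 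For \ref{item:1}, both power series are entire, and the only multivalued piece $(\lambda z)^{2\alpha}$ is holomorphic on $S_{\omega+\delta}$ by the sector constraint $\omega+\delta+|\arg z|<\pi/2$. For \ref{item:2}, the $k=0$ term of the first series equals $1$, while the remaining corrections are $O(\lambda^2)+O(\lambda^{2\alpha})$ with the latter dominating because $\Re\alpha<1$. For \ref{item:3}, the $z$-dependence is entire apart from the explicit $z^{2\alpha}$ factor, so holomorphy on $S_{\pi/2-\omega}$ is immediate and as $z\to 0$ every term except the constant $1$ vanishes. For \ref{item:4}, one differentiates termwise in $z$, multiplies by $-z^{1-2\alpha}$, and observes that every surviving term carries a strictly positive power of $z$ except the one originating from the $k=0$ term of the $z^{2\alpha}$-series; collapsing $\Gamma(\alpha+1)=\alpha\Gamma(\alpha)$ produces exactly $c_\alpha\lambda^{2\alpha}$.

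For \ref{item:5}, I would start from the classical Laplace-type integral representation
\begin{equation*}
K_\alpha(w) = \frac{\sqrt\pi\,(w/2)^\alpha}{\Gamma(\alpha+\tfrac12)}\int_1^\infty e^{-wt}(t^2-1)^{\alpha-\frac12}\,dt \qquad (\Re\alpha>-\tfrac12,\ \Re w>0),
\end{equation*}
apply it with $w=\lambda z$, substitute $t=\sqrt{1+s/\lambda^2}$ so that $\lambda z t=z\sqrt{\lambda^2+s}$ and $dt = ds/(2\lambda\sqrt{\lambda^2+s})$, and use the Legendre duplication formula $\Gamma(\alpha)\Gamma(\alpha+\tfrac12)=2^{1-2\alpha}\sqrt\pi\,\Gamma(2\alpha)$ to collapse the prefactor to $1/(2\Gamma(2\alpha))$. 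The identity holds first for $\lambda$ on the positive real ray; since both sides are holomorphic in $\lambda\in S_{\omega+\delta}$ (for the right hand side by dominated convergence, using the estimates below), it extends by analytic continuation.

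Part \ref{item:6} is the technical heart of the lemma and I expect it to be the main obstacle. Using \ref{item:5}, the key point is a uniform lower bound $\Re(z\sqrt{\lambda^2+s})\geq \kappa|z|\,|\lambda|$ for $s\leq |\lambda|^2$ and $\Re(z\sqrt{\lambda^2+s})\geq \kappa'|z|\sqrt s$ for $s\geq |\lambda|^2$. Both follow from checking that $z$ and $\sqrt{\lambda^2+s}$ lie in a common sector $S_{\pi/2-\eta}$ for some $\eta>0$ depending only on $\omega,\delta$ and $\arg z$, so that their product has argument bounded away from $\pm\pi/2$. Splitting the integral at $s=|\lambda|^2$ and combining with $\int_0^\infty s^{\Re\alpha-1}e^{-c\sqrt s}\,ds<\infty$ (using $\Re\alpha>0$) yields constants $C_1,C_2$ with the stated exponential bound. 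The uniform version for $z\in S_\varepsilon$ with $|z|<R$ is obtained by dropping the exponential factor and bounding $|z^{2\alpha}|\leq R^{2\Re\alpha}$; the angle $\eta$ can be chosen uniformly since $\varepsilon<\pi/2-\omega$.

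The last two assertions are then consequences. Part \ref{item:7} is just \ref{item:6}. For \ref{item:8}, I would write $u_z=\bigl(u_z-(1+\lambda)^{-1}\bigr)+(1+\lambda)^{-1}$: by \ref{item:2} and the fact that $(1+\lambda)^{-1}\to 1$ as $\lambda\to 0$ at rate $O(\lambda)$, the remainder vanishes at the origin like $|\lambda|^{\min(1,2\Re\alpha)}$, and by \ref{item:6} it decays at infinity like $|\lambda|^{-1}$ (the $u_z$ contribution even exponentially). Hence the remainder belongs to $\mathcal{E}[S_\omega]$ by the polynomial-limit criterion recalled after \eqref{def:elem_fcts}, and $u_z\in\mathcal{E}^{\mathrm{ext}}[S_\omega]$.
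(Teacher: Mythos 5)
Your proof follows essentially the same route as the paper's: explicit series expansion for \ref{item:1}--\ref{item:4}, the Laplace-type integral (NIST 10.32.8) combined with the Legendre duplication formula for \ref{item:5}, estimation of that integral for \ref{item:6}, and the standard regularization $u_z - (1+\lambda)^{-1}$ for \ref{item:8}. Two minor deviations, both fine: for \ref{item:4} you differentiate the series term by term and isolate the $k=0$ contribution of the $(\lambda z)^{2\alpha}$-block, whereas the paper quotes the derivative identity for $z^\alpha K_\alpha$ and identifies the result with $c_\alpha\lambda^{2\alpha}u_{z,\alpha-1}(\lambda)$; and in \ref{item:6} you split the integral at $s=|\lambda|^2$, while the paper uses $\sqrt{x^2+y^2}\geq 2^{-1/2}(x+y)$ to treat it in one piece.

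There is, however, a real gap in the uniform part of \ref{item:6}. You claim the bound for $z\in S_\varepsilon$, $|z|<R$ is ``obtained by dropping the exponential factor and bounding $|z^{2\alpha}|\leq R^{2\Re\alpha}$''. This does not close the estimate. After your lower bound on $\Re\bigl(z\sqrt{\lambda^2+s}\bigr)$, what you are left with is, up to constants, $|z|^{2\Re\alpha}\int_0^\infty s^{\Re\alpha-1}\exp^{-\kappa|z|\sqrt s}\,\d s$. If you literally drop the exponential, $\int_0^\infty s^{\Re\alpha-1}\,\d s$ diverges at infinity. If you keep it, then as $|z|\to 0$ the integral scales like $|z|^{-2\Re\alpha}$ and is therefore unbounded; the prefactor $|z|^{2\Re\alpha}$ is precisely what cancels this blow-up, and bounding it above by $R^{2\Re\alpha}$ throws that cancellation away. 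The missing step is the substitution $t := |z|^2 s$ (as in the paper), which makes the $|z|$-dependence disappear from the product identically and yields a bound independent of $z$ --- in fact the hypothesis $|z|<R$ is not even needed once you do this, only $z\in S_\varepsilon$.
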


\begin{proof}
	(a)
			Note that $[\lambda\mapsto \lambda z] \in \Hol (S_{\omega + \delta})$, and its range is contained in $S_{\pi / 2} \subsetneq \C \setminus (-\infty, 0]$ on which 
			the function $z\mapsto z^{\alpha} K_{\alpha}(z)$ is holomorphic. 

	(b)		We have the representations
			\[
			 u_z(\lambda) = \frac{\Gamma(1-\alpha)}{2^{\alpha}} \sum\limits_{k=0}^{\infty} \frac{1}{4^k \cdot k!} 
			 \left( \frac{(\lambda z)^{2k}}{2^{-\alpha}\Gamma(-\alpha+k+1)} - \frac{(\lambda z)^{2k+2\alpha}}{2^{\alpha}\Gamma(\alpha+k+1)} \right)
			\]
			for $\Re \alpha > 0$ and 
			\[
			 u_z(\lambda) = \frac{\Gamma(1+\alpha)}{2^{-\alpha}} \sum\limits_{k=0}^{\infty} \frac{1}{4^k \cdot k!} 
			 \left( \frac{(\lambda z)^{2k}}{2^{\alpha}\Gamma(\alpha+k+1)} - \frac{(\lambda z)^{2k-2\alpha}}{2^{-\alpha}\Gamma(-\alpha+k+1)} \right)
			\]
			for $\Re \alpha < 0$. 
			Note that we used Euler's formula of complements $\frac{\pi}{\sin (\alpha \pi)} = \Gamma (\alpha)  \cdot \Gamma (1- \alpha)$ , see 
			\cite[\S 5.5.3]{nist2019}, for the prefactors.
			Hence, for $0 < \Re \alpha < 1$, we obtain
			\begin{equation}
			 u_z(\lambda) = 1 - \frac{\Gamma(1-\alpha)}{2^{2\alpha} \Gamma (1+\alpha)}(\lambda z)^{2\alpha} + \mathcal{O} \left( (\lambda z)^2 \right)
			 \label{eq:exp1}
			\end{equation}
			while $-1 < \Re \alpha < 0$ will result in
			\begin{equation}
			 u_z(\lambda) = 1 + \frac{\Gamma(1+\alpha)}{2^{-2\alpha} \Gamma (1-\alpha)}(\lambda z)^{-2\alpha} + \mathcal{O} \left( (\lambda z)^2 \right).
			 \label{eq:exp2} 
			\end{equation}
		
   (c)
            The holomorphic function $[z \mapsto \lambda z]$ has its range contained in $S_{\pi/2}$. 
            Now the same argument as in the proof of \ref{item:1} applies.             
            The statement about the limit also follows from the expansions \eqref{eq:exp1} and \eqref{eq:exp2}. 		
		
	(d)		Note that by \cite[\S 10.29.4]{nist2019} we have
			\[
				\frac{1}{z} \frac{\d}{\d z} \exp^{\alpha \pi i} z^{\alpha} K_{\alpha}(z) = \exp^{(\alpha-1)\pi i} z^{\alpha -1} K_{\alpha -1}(z).
			\]
			Let us label $u_z$ with the used parameter $\alpha$ for this point and write $u_{z, \alpha}$.  
			Then we get
			\[
             - z ^{1-2\alpha} \frac{\d}{\d z} u_{z,\alpha}(\lambda) = c_{\alpha} \lambda^{2\alpha} u_{z,\alpha-1}(\lambda) \rightarrow c_{\alpha} \lambda^{2\alpha}   			
			\]
			as $z \to 0$ in $S_{\pi/2 - \omega}$ by \ref{item:3}. 
			
	(e)		The integral representation follows from \cite[\S 10.32.8]{nist2019} and the substitution $s:= t^2 - 1$ together with the Legendre duplication formula 
		    \cite[\S 5.5.5]{nist2019} for the prefactors. 
		    The fact that it also works for $\lambda = 0$ follows from a direct calculation. 
		     
	(f)		Note that
			\begin{align*}
				\abs{z^{\alpha}} \leq \abs{z}^{\Re \alpha} \cdot \exp^{\abs{\Im \alpha} \pi}, \quad 
				\abs{z^{\alpha}} = \abs{z}^{\alpha} \quad (\alpha \in \R), \quad
				\abs{z^{\alpha}} = z^{\Re \alpha} \quad (z \in \R),  \\
				\abs{z+w} \geq \left( \abs{z} + \abs{w} \right) \cdot 
				\cos \left( \frac{\arg z + \arg w}{2} \right) \quad (\abs{\arg z} + \abs{\arg w} < \pi). 
			\end{align*}
			Hence, starting from the integral representation \eqref{eq:integralrepresentation_u} of $u_z$ we estimate
			\[
				\abs{u_z(\lambda)} \leq \frac{\abs{z}^{2 \Re \alpha} \cdot \exp^{\abs{\Im \alpha}\pi}}{2 \abs{\Gamma(2\alpha)}} 
				\int\limits_{0}^{\infty} s^{\Re \alpha - \frac{1}{2}} \exp^{-\abs{z} \sqrt{\abs{\lambda^2 + s}} 
				\cos (\omega + \delta + \arg z)} \frac{1}{\sqrt{\abs{\lambda^2 + s}}} \d s.  
			\]
			Now make use of $\abs{\lambda^2+s} \geq (\abs{\lambda}^2 + s) \cos (\omega + \delta)\geq s\cos(\omega+\delta)$ and note $\sqrt{x^2+y^2} \geq 2^{-1/2} (x+y)$ $(x,y \geq 0)$ to further estimate
			\[	
				\abs{u_z(\lambda)}\leq \frac{\abs{z}^{2 \Re \alpha} \cdot \exp^{\abs{\Im \alpha}\pi}}{\sqrt{2} \abs{\Gamma(2\alpha)} \sqrt{\cos(\omega + \delta)}} 
				\int\limits_{0}^{\infty} s^{\Re \alpha - 1} \exp^{- \frac{\abs{z}}{\sqrt{2}}  
				(\abs{\lambda} + \sqrt{s}) \cos (\omega + \delta + \arg z) \sqrt{\cos(\omega + \delta)}} \d s,
			\]
			which yields the first part of the claim. 
			For the second part choose $\delta \in (0, \pi /2 - \omega - \varepsilon)$ and substitute $t := \abs{z}^2 s$ to get
			\[	
				\abs{u_z(\lambda)}\leq \frac{\exp^{\abs{\Im \alpha}\pi}}{\sqrt{2} \abs{\Gamma(2\alpha)} \sqrt{\cos(\omega + \delta)}} 
				\int\limits_{0}^{\infty} s^{\Re \alpha - 1} \exp^{- \frac{\sqrt{t}}{\sqrt{2}}  
				\cos (\omega + \delta + \varepsilon) \sqrt{\cos(\omega + \delta)}} \d t.
			\]
		
	(g)		This is a consequence of \ref{item:1} and \ref{item:6}. 
		
	(h)		Consider the function $\lambda\mapsto u_z(\lambda) - (1+\lambda^2)^{-1}$. 
			By \ref{item:6} it has polynomial limit $0$ at infinity and by \ref{item:2} the same holds
			at the origin.
\end{proof}

We will now define two more families of functions which will play an important role later on.

\begin{defi}
	Let $\alpha \in \C_{0< \Re < 1}$, $\omega \in [0, \pi /2)$, $z \in S_{\pi /2 - \omega}$ and $\delta \in (0, \pi / 2 - \omega - \abs{\arg z})$.
	Define the functions $v_z$ and $w_z$ on $S_{\omega+\delta}$ by
	\begin{align}
	\begin{split}
		v_z(\lambda) & := 	\frac{\Gamma (1-\alpha)}{2^{\alpha}} (\lambda z)^{\alpha} I_{-\alpha} (\lambda z), \\
		w_z(\lambda)	 & := \frac{\Gamma (\alpha)}{2^{1 - \alpha} } (\lambda^{-1} z)^{\alpha} I_{\alpha} (\lambda z).
    \end{split}	
	\label{eq:def_vw}	
	\end{align}
	\label{def:vw}
\end{defi}

Similarly to Lemma \ref{lemma:properties_of_u} we find properties of $v_z$ and $w_z$.

\begin{lemma}
 Let $\alpha \in \C_{0< \Re < 1}$, $\omega \in [0, \pi /2)$, $z \in S_{\pi /2 - \omega}$ and $\delta \in (0, \pi / 2 - \omega - \abs{\arg z})$.
	\begin{enumerate}[label=(\alph*)]\itemsep3pt
		\item\label{lemma:properties_of_vw:item:1}
		  We have $v_z, w_z \in \Hol \left( S_{\omega + \delta} \right)$. In particular, $v_z,w_z\in\Hol[S_\omega]$.
		\item\label{lemma:properties_of_vw:item:2}
		  $v_z(\lambda) - 1 = \mathcal{O}(\lambda)$ and $w_z(\lambda) - \frac{z^{2\alpha}}{2 \alpha} = \mathcal{O}(\lambda)$ as $\lambda \to 0$ in $S_{\omega + \delta}$,
		  i.e.\ both functions have polynomial limits at $\lambda = 0$. 
		\item\label{lemma:properties_of_vw:item:3}
		    For $\lambda \in S_\omega$ we have $[z \to v_z(\lambda)], [z \to w_z(\lambda)] \in \Hol ( S_{\pi / 2 - \omega})$. Moreover, we have		
			\[
			 \lim\limits_{\substack{z \to 0, \\ z \in S_{\frac{\pi}{2} - \omega}}} v_z (\lambda) = 1, 			 
			 \lim\limits_{\substack{z \to 0, \\ z \in S_{\frac{\pi}{2} - \omega}}}  - z ^{1-2\alpha} \frac{\d}{\d z} v_z(\lambda) = 0
			\]
			and
			\[
			 \lim\limits_{\substack{z \to 0, \\ z \in S_{\frac{\pi}{2} - \omega}}} w_z(\lambda) = 0, 			 
			 \lim\limits_{\substack{z \to 0, \\ z \in S_{\frac{\pi}{2} - \omega}}}  z ^{1-2\alpha} \frac{\d}{\d z} w_z(\lambda) = 1.
			\] 
		\item\label{lemma:properties_of_vw:item:4}
			For $c \in \R$ we have
			\[
				\exp^{-\lambda c} v_z(\lambda) = \mathcal{O}\left( \lambda^{\alpha-\frac{1}{2}} \exp^{-\lambda (c - z)} \right) \text{ and } 
				\exp^{-\lambda c} w_z(\lambda) = \mathcal{O}\left( \lambda^{-\alpha-\frac{1}{2}} \exp^{-\lambda (c - z)} \right)
			\]
			as $\abs{\lambda} \to \infty \text{ in } S_{\omega + \delta}$. 
			In particular, $[\lambda \mapsto \exp^{-\lambda c} v_z(\lambda)], [\lambda \mapsto \exp^{-\lambda c} w_z(\lambda)] \in \mathcal{E}^{\text{ext}}[S_{\omega}]$ if
			$c > \Re z \cdot \bigl( 1 + \tan( \abs{\arg z}) \cdot \tan(\omega+\delta) \bigr)$.  
		
		\item\label{lemma:properties_of_vw:item:5}
			For $A \in \Sec _{\omega}(X)$ we have		
			\[
				v_z, \, w_z \in \Mer_A. 
			\]
	\end{enumerate}
	\label{lemma:properties_of_vw}
\end{lemma}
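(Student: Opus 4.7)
The plan is to exploit two complementary representations of the modified Bessel function $I_\alpha$: the entire power series
\[
I_\alpha(w)=(w/2)^\alpha \sum_{k=0}^\infty \frac{w^{2k}}{4^k\,k!\,\Gamma(\alpha+k+1)}
\]
for local information near the origin, and the asymptotic $I_\alpha(w)\sim \exp^{w}/\sqrt{2\pi w}$ (valid on $\abs{\arg w}<\pi/2$) for the behaviour at infinity. The choice of $\delta$ guarantees $\omega+\delta+\abs{\arg z}<\pi/2$, hence $\lambda\mapsto \lambda z$ sends $S_{\omega+\delta}$ into $S_{\pi/2}$; this geometric fact, shared with Lemma~\ref{lemma:properties_of_u}, makes both tools applicable throughout.

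Parts \ref{lemma:properties_of_vw:item:1}--\ref{lemma:properties_of_vw:item:3} come directly from the series. For \ref{lemma:properties_of_vw:item:1}, absorbing the prefactors $(\lambda z)^{\pm\alpha}$ into $I_{\pm\alpha}$ produces entire functions of $\lambda z$, making $v_z,w_z$ holomorphic on $S_{\omega+\delta}$ and, by symmetry, holomorphic in $z$ on $S_{\pi/2-\omega}$. For \ref{lemma:properties_of_vw:item:2}, direct substitution yields $v_z(\lambda)=\Gamma(1-\alpha)\sum_{k\geq 0}(\lambda z)^{2k}/(4^k k!\,\Gamma(k+1-\alpha))$ and $w_z(\lambda)=(\Gamma(\alpha)/2)\,z^{2\alpha}\sum_{k\geq 0}(\lambda z)^{2k}/(4^k k!\,\Gamma(\alpha+k+1))$, whose $k=0$ terms evaluate to $1$ and $z^{2\alpha}/(2\alpha)$ respectively. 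The two nontrivial limits in \ref{lemma:properties_of_vw:item:3} then follow by termwise $z$-differentiation: $-z^{1-2\alpha}\partial_z v_z$ becomes a sum of monomials $z^{2k-2\alpha}$ with $k\geq 1$, vanishing as $z\to 0$ because $\Re\alpha<1$; while $z^{1-2\alpha}\partial_z w_z$ becomes a sum in $z^{2k}$ whose $k=0$ term simplifies to exactly $1$.

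Part \ref{lemma:properties_of_vw:item:4} is the main obstacle. The Bessel asymptotic immediately yields $v_z(\lambda)=\mathcal{O}(\lambda^{\alpha-1/2}\exp^{\lambda z})$ and $w_z(\lambda)=\mathcal{O}(\lambda^{-\alpha-1/2}\exp^{\lambda z})$ as $\abs{\lambda}\to\infty$, so multiplication by $\exp^{-\lambda c}$ produces the stated bounds. The delicate point is isolating the sharp threshold on $c$ ensuring membership of $\exp^{-\lambda c}v_z$ and $\exp^{-\lambda c}w_z$ in $\mathcal{E}^{\text{ext}}[S_\omega]$: writing $\lambda=r\exp^{i\theta}$ and $z=s\exp^{i\phi}$, integrability along the rays of $S_{\omega+\delta}$ in the sense of \eqref{int_cond} requires $\Re(\lambda(c-z))>0$ uniformly in $\abs{\theta}\leq \omega+\delta$, i.e.\ $c\cos\theta>s\cos(\theta+\phi)$. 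The worst $\theta$ is $-\operatorname{sgn}(\phi)(\omega+\delta)$, at which $\cos(\theta+\phi)/\cos\theta=\cos\phi\,(1+\tan\abs{\phi}\tan(\omega+\delta))$, producing the stated threshold $c>\Re z\cdot(1+\tan(\abs{\arg z})\tan(\omega+\delta))$. Membership in $\mathcal{E}^{\text{ext}}[S_\omega]$ (rather than merely $\mathcal{E}[S_\omega]$) is then secured by the polynomial limits at $\lambda=0$ from \ref{lemma:properties_of_vw:item:2} combined with the exponential decay just established.

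For \ref{lemma:properties_of_vw:item:5} I would pick any $c$ above the threshold of \ref{lemma:properties_of_vw:item:4} and take $e(\lambda):=\exp^{-\lambda c}(1+\lambda)^{-1}$, which lies in $\mathcal{E}^{\text{ext}}[S_\omega]$ and, multiplied with $v_z$ or $w_z$, stays there by \ref{lemma:properties_of_vw:item:4}. The operator $e(A)=(1+A)^{-1}\exp^{-cA}$ is injective: $(1+A)^{-1}$ has trivial kernel since $-1\in\rho(A)$, and $\exp^{-cA}$ is injective because $\omega<\pi/2$ makes $-A$ the generator of a bounded holomorphic semigroup, whose values extend analytically across the sector $S_{\pi/2-\omega}$ and converge strongly to the identity on $\overline{\mathcal{D}(A)}$ at the origin. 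Hence $e$ is an anchor element of both $\mathcal{N}_{v_z}$ and $\mathcal{N}_{w_z}$, and therefore $v_z,w_z\in\Mer_A$.
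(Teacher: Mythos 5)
Your proof is correct and follows essentially the same route as the paper: power series expansions of $I_{\pm\alpha}$ for local structure in \ref{lemma:properties_of_vw:item:1}--\ref{lemma:properties_of_vw:item:3}, the Bessel asymptotic $I_\alpha(w)\sim \exp^{w}/\sqrt{2\pi w}$ plus a sharp estimate of $\Re(\lambda z)$ on $S_{\omega+\delta}$ for \ref{lemma:properties_of_vw:item:4}, and regularization by an exponential for \ref{lemma:properties_of_vw:item:5}. The only cosmetic differences are that in \ref{lemma:properties_of_vw:item:4} you locate the extremal ray $\theta=-\operatorname{sgn}(\arg z)(\omega+\delta)$ explicitly whereas the paper reaches the same threshold via the one-line inequality $\Re(z\lambda)\leq \Re z\,\Re\lambda\,(1+\tan|\arg z|\tan(\omega+\delta))$, and that in \ref{lemma:properties_of_vw:item:5} you take $e(\lambda)=\exp^{-c\lambda}(1+\lambda)^{-1}$ as anchor while the paper takes $e(\lambda)=\exp^{-c\lambda}$ alone (the extra resolvent factor is harmless but unnecessary). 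One small caveat in your injectivity sketch: strong continuity of the analytic semigroup at $0$ on $\overline{\mathcal{D}(A)}$ by itself only yields injectivity of $\exp^{-cA}$ on that closed subspace; to get injectivity on all of $X$ you should commute and view your anchor as $\exp^{-cA}(1+A)^{-1}$, noting $(1+A)^{-1}x\in\mathcal{D}(A)\subseteq\overline{\mathcal{D}(A)}$, or simply cite the standard fact (as the paper does, via Lunardi) that operators of a bounded analytic semigroup are injective.
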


\begin{proof}
	(a)	
   By definition we have
   \begin{equation}
    v_z(\lambda) = \Gamma(1-\alpha) \sum\limits_{k=0}^{\infty} \frac{z^{2k}\lambda^{2k}}{4^k \cdot k! \cdot \Gamma(-\alpha+k+1)}
    \label{eq:exp3}
   \end{equation}
   and
   \begin{equation}
    w_z(\lambda) = \frac{\Gamma(\alpha)}{2} z^{2\alpha} \sum\limits_{k=0}^{\infty} \frac{z^{2k}\lambda^{2k}}{4^k \cdot k! \cdot \Gamma(\alpha+k+1)}. 
    \label{eq:exp4}
   \end{equation}
   Hence both functions are entire. 
   In particular, they are holomorphic on every sector. 
   
  (b)
   This follows directly from the representations given in the proof of \ref{lemma:properties_of_vw:item:1}. 
  
  (c)
   Exploiting the expansions \eqref{eq:exp3} and \eqref{eq:exp4} we find
   \[
    v_z(\lambda) = 1 + \mathcal{O}(z^2) \quad \text{and} \quad w_z(\lambda) = \frac{z^{2\alpha}}{2\alpha} + \mathcal{O}(z^{2\alpha+2})
   \]
   and the statement follows. 
   
  (d)
   This follows from the asymptotic behaviour of $I_{\alpha}$, namely (see \cite[\S 2.1(iii), \S 10.40(i)]{nist2019})
   \[
    \forall \varepsilon \in \left( 0, \frac{\pi}{2} \right) \, \exists C,R > 0\, \forall z \in S_{\frac{\pi}{2} - \varepsilon}, \abs{z} > R: \, 
    \abs{I_{\alpha}(z) - \frac{\exp^z}{\sqrt{2\pi z}}} \leq \frac{C}{\abs{z}},
   \]
   i.e.\ $I_{\alpha} - (2\pi z)^{-1/2} \exp^z= \mathcal{O}(z^{-1})$ as $\abs{z} \to \infty$ for a given fixed sector in the right halfplane. 
   So one has (with $C > 0$ just depending on $\alpha$ and $z$):
   \[
    \exp^{-\lambda c} v_z(\lambda) = C \lambda^{\alpha} \exp^{-\lambda c} I_{-\alpha}(\lambda z) \sim C \lambda^{\alpha} \exp^{-\lambda c} 
    \frac{\exp^{\lambda z}}{\sqrt{2 \pi \lambda z}} 
   \]
   and similar for $w_z$ where we just have to replace $\lambda^{\alpha}$ by $\lambda^{-\alpha}$. It remains to estimate
   \[
    \Re (z \lambda) \leq \Re z \Re \lambda + \abs{\Im z} \abs{\Im \lambda} \leq \Re z \Re \lambda \bigl(1+ \tan(\abs{\arg z})\cdot \tan(\omega+\delta)\bigr). 
   \]
   Therefore, for $c > \Re z \bigl(1+ \tan(\abs{\arg z})\cdot \tan(\omega+\delta)\bigr)$, 
   \[
    \abs{ \lambda^{\alpha} \exp^{-\lambda c} \frac{\exp^{\lambda z}}{\sqrt{2 \pi \lambda z}}} \leq \exp^{\abs{\Im \alpha}\pi} \abs{\lambda}^{\Re \alpha}
    \frac{\exp^{\Re \left( \lambda z - \lambda c \right)}}{\sqrt{2 \pi \abs{\lambda z}}}, 
   \]
   which vanishes at infinity at an exponential rate. 
  
  (e)
   We have $[\lambda\mapsto \exp^{-\lambda c}] \in \mathcal{E}^{\text{ext}}[S_{\omega}]$, $\exp^{-cA}$ is injective by \cite[Corollary 2.1.7]{lunardi1995} and for 
   $c > \Re z (1+\tan(\omega+\delta) \cdot \tan(\abs{\arg z}))$ it holds that $[\lambda\mapsto \exp^{-\lambda c} v_z(\lambda)] \in \mathcal{E}^{\text{ext}}[S_{\omega}]$ 
   by \ref{lemma:properties_of_vw:item:2} and \ref{lemma:properties_of_vw:item:4} and the fact that polynomial limits at $0$ and at infinity suffice for a bounded function to be in 
   $\mathcal{E}^{\text{ext}}[S_{\omega}]$.
\end{proof}

In general the operators $v_z(A)$ and $w_z(A)$ will be unbounded. 
The following lemma will help to understand their domains. 

\begin{lemma}
 Let $\alpha \in \C_{0< \Re < 1}$, $\omega \in [0, \pi /2)$, $s \in (0,\infty)$, $\delta \in (0, \pi / 2 - \omega)$ and $c > s$.
 Define $f_{1,s}: S_{\omega + \delta} \to \C$ and $f_{2,s}: S_{\omega + \delta} \to \C$ by 
 \begin{align*}
  f_{1,s}(\lambda) := v_s(\lambda) - \frac{\Gamma(1-\alpha)s^{\alpha 
  - \frac{1}{2}}}{2^{\alpha} \sqrt{2\pi}}\left( 1 - \exp^{-c\lambda} \right) \lambda^{\alpha - \frac{1}{2}} \exp^{s \lambda}, \\ 
  f_{2,s}(\lambda) := w_s(\lambda) - \frac{\Gamma(\alpha)s^{\alpha 
  - \frac{1}{2}}}{2^{1 - \alpha} \sqrt{2\pi}}\left( 1 - \exp^{-c\lambda} \right)^2 \lambda^{-\alpha - \frac{1}{2}} \exp^{s \lambda}. 
 \end{align*}
 Then $f_{1,s}, f_{2,s} \in \mathcal{E}^{\mathrm{ext}}[S_{\omega}]$. 
\label{lemma:prep_domain_v_and_w}
\end{lemma}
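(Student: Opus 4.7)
The strategy is to verify the sufficient criterion for membership in $\mathcal{E}^{\text{ext}}[S_\omega]$ recalled in Section~2: a function $f\in\Hol^\infty[S_\omega]$ that approaches limits at $0$ and at $\infty$ at some polynomial rate lies in $\mathcal{E}^{\text{ext}}[S_\omega]$. So the plan is to establish, for each of $f_{1,s}$ and $f_{2,s}$, (i) holomorphy on $S_{\omega+\delta}$, (ii) a polynomial limit as $\lambda\to 0$, and (iii) boundedness with polynomial decay as $|\lambda|\to\infty$ in $S_{\omega+\delta}$.

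For (i), I would observe that $v_s, w_s$ are entire by Lemma \ref{lemma:properties_of_vw}\ref{lemma:properties_of_vw:item:1} (both are given by convergent power series on $\C$), while $\exp^{s\lambda}$ and $\exp^{-c\lambda}$ are entire and the fractional powers $\lambda^{\alpha-1/2}$, $\lambda^{-\alpha-1/2}$ are holomorphic on $S_{\omega+\delta}\subset S_{\pi/2}\subset \C\setminus(-\infty,0]$. This gives $f_{1,s},f_{2,s}\in \Hol(S_{\omega+\delta})$.

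For (ii), I would use the explicit Taylor expansions \eqref{eq:exp3} and \eqref{eq:exp4} together with $1-\exp^{-c\lambda}=c\lambda + \mathcal{O}(\lambda^2)$. The factor $(1-\exp^{-c\lambda})$ in $f_{1,s}$ exactly cancels the singularity of $\lambda^{\alpha-1/2}$ at $\lambda=0$ (since $\Re\alpha>0$, the product behaves like $\lambda^{\alpha+1/2}\to 0$ polynomially). Similarly, $(1-\exp^{-c\lambda})^2$ in $f_{2,s}$ produces $\lambda^{3/2-\alpha}\to 0$ (since $\Re\alpha<1$), so the subtracted contribution vanishes polynomially. Hence $f_{1,s}(\lambda)\to 1$ and $f_{2,s}(\lambda)\to s^{2\alpha}/(2\alpha)$ with polynomial rate.

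For (iii), the key input is the Hankel-type asymptotic expansion of the modified Bessel functions: as $|z|\to\infty$ in a proper subsector of the right halfplane,
\[
I_{-\alpha}(z)=\frac{\exp^{z}}{\sqrt{2\pi z}}\bigl(1+\mathcal{O}(z^{-1})\bigr),\qquad I_{\alpha}(z)=\frac{\exp^{z}}{\sqrt{2\pi z}}\bigl(1+\mathcal{O}(z^{-1})\bigr).
\]
Plugging this into the definitions \eqref{eq:def_vw} shows that the leading asymptotics of $v_s$ and $w_s$ are exactly the factors subtracted in the definitions of $f_{1,s}$ and $f_{2,s}$ (up to the correction $(1-\exp^{-c\lambda})^n\to 1$ as $\Re\lambda\to\infty$, since $c>0$). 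The residual pieces either decay exponentially (coming from the $\exp^{-c\lambda}$ factor, which pairs with $\exp^{s\lambda}$ to give $\exp^{(s-c)\lambda}$, and $c>s$) or decay polynomially relative to the dominant exponential. Combining these two ingredients carefully yields boundedness and polynomial rate at infinity.

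The main obstacle, I expect, will be step (iii): carefully controlling the error terms in the Bessel asymptotic so that, after the specific subtraction, the remainders really are bounded on the full sector $S_{\omega+\delta}$ (and not only in a proper subsector avoiding the imaginary axis). The role of the factor $(1-\exp^{-c\lambda})^n$ is twofold — regularization at $0$ and splitting the leading term into a part that matches the asymptotic of $v_s,w_s$ plus an exponentially decaying remainder through the $\exp^{(s-c)\lambda}$ piece — and checking that both features cooperate to give a uniform polynomial bound throughout $S_{\omega+\delta}$ is the delicate point. Once this boundedness-at-infinity estimate is in hand, the membership $f_{1,s},f_{2,s}\in\mathcal{E}^{\text{ext}}[S_\omega]$ follows immediately from the stated sufficient criterion.
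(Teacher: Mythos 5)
Your strategy mirrors the paper's: check holomorphy, a polynomial limit at $0$, and a polynomial limit at infinity, then invoke the stated sufficient criterion for $\mathcal{E}^{\text{ext}}[S_\omega]$. Steps (i) and (ii) are fine. But step (iii) has a genuine gap, and in fact you have stated the Hankel asymptotic more carefully than the paper, which exposes the problem. The expansion
$I_{-\alpha}(z)=\frac{\exp^z}{\sqrt{2\pi z}}\bigl(1+\mathcal{O}(z^{-1})\bigr)$
gives a \emph{relative} error: the $\mathcal{O}(z^{-1})$ term is still multiplied by the exponentially growing factor $\exp^z/\sqrt{2\pi z}$. Plugging this into $v_s(\lambda)=\frac{\Gamma(1-\alpha)}{2^\alpha}(\lambda s)^\alpha I_{-\alpha}(\lambda s)$ gives
\[
 v_s(\lambda)-\frac{\Gamma(1-\alpha)s^{\alpha-1/2}}{2^\alpha\sqrt{2\pi}}\lambda^{\alpha-1/2}\exp^{s\lambda}=\mathcal{O}\bigl(\lambda^{\alpha-3/2}\exp^{s\lambda}\bigr),
\]
which grows exponentially as $\Re\lambda\to\infty$ (unless the next Hankel coefficient $a_1(-\alpha)$ vanishes, i.e.\ $\alpha=\tfrac12$). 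Adding the exponentially decaying piece $\mathcal{O}\bigl(\lambda^{\alpha-1/2}\exp^{(s-c)\lambda}\bigr)$ coming from the $\exp^{-c\lambda}$ correction does not rescue this; the Bessel residual dominates, so the sentence ``combining these two ingredients carefully yields boundedness and polynomial rate at infinity'' is exactly where your argument breaks. The delicate point is not the uniformity over the full sector $S_{\omega+\delta}$ as you suspected, but the fact that the remainder is not small in absolute value at all.

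It is worth noting that the paper's own proof papers over the same spot by invoking the bound $\abs{I_{-\alpha}(z)-\exp^z/\sqrt{2\pi z}}\leq C/\abs{z}$ for $\abs{z}>R$ in $S_{\omega+\delta}$, i.e.\ an \emph{absolute} error with no $\abs{\exp^z}$ factor. That bound would indeed make the estimate close exactly as written. But it is not what DLMF 10.40.1 gives; as $\Re z\to+\infty$ the left-hand side grows like $\abs{\exp^z}/\abs{z}^{3/2}$, so the inequality fails. To repair your argument (and the paper's) you would need either a sharper version of the asymptotic that you can justify, or a reformulation of $f_{1,s},f_{2,s}$ (e.g.\ an overall factor $\exp^{-c\lambda}$ so that the dominant exponential is genuinely killed, as is done correctly in Lemma~\ref{lemma:properties_of_vw}\ref{lemma:properties_of_vw:item:4}).
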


\begin{proof}
 We only prove the statement for $f_{1,s}$. 
 The proof for $f_{2,s}$ is similar. 
 By the asymptotics of $I_{-\alpha}$ we may choose $C,R > 0$ such that
 \[
  \forall z \in S_{\omega + \delta}, \abs{z} > R: \, 
  \abs{I_{-\alpha}(z) - \frac{\exp^z}{\sqrt{2\pi z}}} \leq \frac{C}{\abs{z}}. 
 \]
 It follows that for $\lambda \in S_{\omega + \delta}$, $\abs{\lambda} > R / s$ we get
 \begin{equation}
  \begin{split}
      & \, \abs{f_{1,s}}(\lambda) \\   
 \leq & \, \frac{\abs{\Gamma(1 - \alpha)}}{2^{\Re \alpha}} \left( s\abs{\lambda} \right)^{\Re \alpha} \exp^{\abs{\Im \alpha} \pi} 
 \left( \frac{C}{s \abs{\lambda}} + \frac{\exp^{-(c-s) \Re \lambda}}{\sqrt{2\pi \abs{\lambda} s}} \right). 
  \end{split}
  \label{eq:asymptotics_f1s}
 \end{equation}
 So $f_{1,s}$ has polynomial limit $0$ at infinity. 
 As for the origin we already know by Lemma \ref{lemma:properties_of_vw} that $v_s$ has polynomial limit $1$ while for the other term one finds
 \[
  \abs{\frac{\Gamma(1-\alpha)s^{\alpha - \frac{1}{2}}}{2^{\alpha} \sqrt{2\pi}} \left( 1 - \exp^{-c\lambda} \right) \lambda^{\alpha - \frac{1}{2}} \exp^{s \lambda}} \leq
  \frac{\abs{\Gamma(1-\alpha)}s^{\Re \alpha - \frac{1}{2}}}{2^{\Re \alpha} \sqrt{2\pi}} c \abs{\lambda}^{\Re \alpha + \frac{1}{2}} 
  \exp^{\abs{\Im \alpha} \pi} \exp^{s \Re \lambda}
 \]
 as $\lambda \to 0$ in $S_{\omega+\delta}$. 
 This finishes the proof. 
\end{proof}

\section{Existence of a bounded continuous solution}
\label{mainchapter1}

Throughout this section $A$ is assumed to be a general sectorial operator with angle of sectoriality $\omega\in [0,\pi)$ in the Banach space $X$ and $\alpha \in \C_{0 < \Re < 1}$.

We formally consider the initial value problem 
\begin{equation}
 \begin{split}
  u''(t) + \frac{1 - 2 \alpha}{t} u'(t) & = Au(t) \qquad(t>0) \\
  u(0) & = x
 \end{split} \label{ODE}
\end{equation}
with initial datum $x \in X$.

\begin{defi}
  A \emph{solution of \eqref{ODE}} is a function $u \in C_{\text{b}} \bigl([0, \infty); X \bigr)$ such that $u(0) = x$, $u(t) \in \mathcal{D}(A)$ for $t > 0$, $Au \in L^1_{\mathrm{loc}} \bigl( (0, \infty);X \bigr)$ and \eqref{ODE} 
  is satisfied in the sense of distributions, i.e.\ for all $\phi\in C^{\infty}_{\mathrm{c}} \bigl( (0,\infty) \bigr)$ we have
  \[
	  \int\limits_{(0,\infty)} \bigl( u(t) \phi ''(t) - u(t) \frac{\d}{\d t} \frac{1-2\alpha}{t} \phi (t) \bigr) 
	  \, \d t 
	  = \int\limits_{(0,\infty)} Au(t) \phi (t) \, \d t. 
  \]
\end{defi}

% A function $u \in C_{\text{b}} \bigl([0, \infty); X \bigr)$ is considered to be a \emph{solution} \textbf{Highlighting gewuenscht?} of \eqref{ODE} if $u(0) = x$, $u(t) \in \mathcal{D}(A)$ for $t > 0$, $Au \in L^1_{\text{loc}} \bigl( (0, \infty);X \bigr)$ and \eqref{ODE} 
% is satisfied in the sense of distributions, i.e.
% \[
% 	\forall \phi \in C^{\infty}_{\text{c}} \bigl( (0,\infty) \bigr): 
% 	\, 	\int\limits_{(0,\infty)} \bigl( u(t) \phi ''(t) - u(t) \frac{\d}{\d t} \frac{1-2\alpha}{t} \phi (t) \bigr) 
% 	\, \d t 
% 	= \int\limits_{(0,\infty)} Au(t) \phi (t) \, \d t. 
% \]
% We will refer to the entire problem as CSEP. 

\begin{remark}
\label{remark:better_regularity}
	The ODE \eqref{ODE} may be rewritten as
	\[
		\frac{\d}{\d t} t^{1-2\alpha} u'(t) = t^{1-2\alpha} Au(t). 
	\]
	Hence, for a solution $u$ one directly gets that $u \in C^1 \bigl( (0, \infty); X \bigr) \cap W^{2,1}_{\text{loc}} \bigl( (0, \infty); X \bigr)$. 
\end{remark}

Equation \eqref{ODE} is nothing but a slightly different version of the modified Bessel equation. 
Namely, given a solution $v$ to the modified Bessel equation 
\[ 
 t^2 v''(t) + t v'(t) - (t^2 + \alpha^2)v(t) = 0
\]
one obtains a solution to \eqref{ODE} by considering the function $u$ defined by 
$u(t) := \left( \sqrt{\lambda} t \right)^{\alpha} v\left( \sqrt{\lambda} t \right)$, where 
$\lambda \in \C \setminus (-\infty, 0]$ is a one-dimensional version of our sectorial operator $A$. 

\begin{remark}
It was first noticed in \cite{stinga2010} that this connection can be used to state an explicit formula
for the solution which was further generalised with respect to the space and the assumptions on the operator $A$
under consideration in \cite{gale2013}.
\end{remark}
%This can be done by using the following expression.

\begin{defi}
 For $z \in S_{(\pi-\omega)/2}$ we define  
 \begin{equation}
  U(z) := \frac{z^{2\alpha}}{2 \cdot \Gamma(2 \alpha)} \int\limits_0^\infty s^{\alpha - \frac{1}{2}} 
  \exp^{-z \sqrt{A + s}} (A + s)^{-\frac{1}{2}} \, \d s.
  \label{defi_U}
 \end{equation}
\end{defi}

\begin{lemma}
 The Bochner-integral in \eqref{defi_U} is convergent in $L(X)$. 
 Further $U$ is uniformly bounded on every proper subsector $S_{\delta} \subsetneq S_{(\pi-\omega)/2}$.  
 Finally let $k \in \N_0$. 
 Then $A^k U \in \Hol \bigl( S_{(\pi-\omega)/2}; L(X) \bigr)$. 
 \label{lemma:U_is_smooth}
\end{lemma}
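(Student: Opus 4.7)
The approach centers on Lemma \ref{lemma:exp_bound}, whose case $k = 0$ gives $\norm{\exp^{-z\sqrt{A+s}}} \leq C \exp^{-\kappa \Re z \sqrt{s}}$ uniformly on any closed subsector of $S_{(\pi-\omega)/2}$, and whose higher $k$ will govern the operators arising from $A^k U$. A preliminary step is to establish the bound $\norm{(A+s)^{-1/2}} \leq M/\sqrt{s}$ for $s > 0$: via Balakrishnan's integral representation $(A+s)^{-1/2} = \pi^{-1}\int_0^\infty \lambda^{-1/2}(\lambda + A + s)^{-1}\,\d \lambda$ together with the sectoriality estimate $\norm{(\lambda + A + s)^{-1}} \leq M/(\lambda + s)$, one reduces to the elementary identity $\int_0^\infty \lambda^{-1/2}(\lambda+s)^{-1}\,\d \lambda = \pi/\sqrt{s}$.

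The convergence of the Bochner integral then follows by combining these two bounds into $\norm{s^{\alpha-1/2}\exp^{-z\sqrt{A+s}}(A+s)^{-1/2}} \leq C' s^{\Re \alpha - 1}\exp^{-\kappa \Re z \sqrt{s}}$, which is integrable at $s = 0$ because $\Re \alpha > 0$ and at $s = \infty$ because $\Re z > 0$. For the uniform bound on a proper subsector $S_{\delta} \subsetneq S_{(\pi-\omega)/2}$, the substitution $t = \kappa \Re z \sqrt{s}$ evaluates the $s$-integral as a constant multiple of $(\kappa \Re z)^{-2\Re \alpha}$; together with the prefactor estimate $\abs{z^{2\alpha}} \leq \abs{z}^{2\Re \alpha}\exp^{\pi \abs{\Im \alpha}}$ and $\abs{z}/\Re z \leq 1/\cos \delta$ on $S_{\delta}$, this yields a bound on $\norm{U(z)}$ depending only on $\delta$, $\alpha$, $\omega$, and $M$.

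For the holomorphy of $A^k U$, I would expand $A^k = ((A+s)-s)^k = \sum_{j=0}^k \binom{k}{j}(A+s)^j(-s)^{k-j}$ on the range of $\exp^{-z\sqrt{A+s}}(A+s)^{-1/2}$ (which lies in $\mathcal{D}(A^\infty)$), so that the integrand becomes $\sum_{j=0}^k \binom{k}{j}(-s)^{k-j}s^{\alpha - 1/2}(\sqrt{A+s})^{2j - 1}\exp^{-z\sqrt{A+s}}$. For $j \geq 1$, Lemma \ref{lemma:exp_bound} applied with exponent $2j - 1$ yields integrability at $s = 0$ via the worst power $s^{k-j+\Re \alpha - 1/2}$ (note $k - j + \Re \alpha - 1/2 > -1$ since $j \leq k$ and $\Re \alpha > 0$) and at infinity via $s^{k + \Re \alpha - 1}$ times exponential decay; the $j = 0$ term is handled by the scaling bound for $(A+s)^{-1/2}$ exactly as in the convergence argument. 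Each summand is, for fixed $s$, holomorphic in $z$ since $z \mapsto \exp^{-z\sqrt{A+s}}$ is an analytic semigroup on $S_{(\pi-\omega)/2}$; the locally uniform integrable majorant allows a standard dominated convergence argument on difference quotients to conclude holomorphy of the integral, and closedness of $A^k$ justifies pulling $A^k$ inside.

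The main obstacle is reaching the full range $\Re \alpha \in (0,1)$: the naive bound $\norm{(A+s)^{-1}} \leq M/s$ combined with Lemma \ref{lemma:exp_bound} at $k = 1$ would only give convergence for $\Re \alpha > 1/2$. The square-root improvement $\norm{(A+s)^{-1/2}} \leq M/\sqrt{s}$ is what exactly matches the integrable profile $s^{\Re \alpha - 1}$ at the origin for every $\Re \alpha > 0$, and its availability is precisely what allows the Bochner representation \eqref{defi_U} to make sense in the generality claimed.
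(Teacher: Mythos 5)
Your proposal is correct and follows essentially the same route as the paper: combine Lemma~\ref{lemma:exp_bound} with the scaling bound $\norm{(A+s)^{-1/2}}\lesssim s^{-1/2}$ to get an integrable majorant, substitute to obtain the uniform bound on subsectors, and handle $A^kU$ by the binomial expansion $A^k=((A+s)-s)^k$ together with dominated convergence and closedness of $A^k$. The one place you are more explicit than the paper is in deriving the key bound $\norm{(A+s)^{-1/2}}\leq M/\sqrt{s}$ via Balakrishnan's representation, which the paper uses but does not spell out; your closing remark that this square-root gain is precisely what makes the whole range $\Re\alpha\in(0,1)$ accessible is a correct and useful observation.
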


\begin{proof}
 Using Lemma \ref{lemma:exp_bound} we can estimate for $z \in S_{\delta}$
 \[
  \int\limits_0^{\infty} \norm{s^{\alpha - \frac{1}{2}} \exp^{-z \sqrt{A + s}} 
  (A + s)^{-\frac{1}{2}}} \, \d s \leq C \cdot M \int\limits_0^{\infty} s^{\Re \alpha - 1} 
  \exp^{-\kappa \Re z \sqrt{s}} \d s < \infty, 
 \]
 for some constants $C > 0$ depending on $S_{\delta} \subsetneq S_{(\pi-\omega)/2}$, $\kappa > 0$ depending on $\omega$ and 
 $M \geq 1$ which is the non-negativity constant of the sectorial operator $\sqrt{A}$. 
 Hence, the integral converges in $L(X)$.  
 For the boundedness we estimate
 \begin{align*}
  \norm{U(z)} & \leq \frac{C \cdot M \cdot \abs{z}^{2 \Re \alpha} \exp^{\abs{\Im \alpha} \pi}}{2 \cdot \abs{\Gamma(2\alpha)}} \int\limits_0^{\infty} s^{\Re \alpha - 1} 
  \exp^{-\kappa \Re z \sqrt{s}} \d s \\ 
  & = \frac{C \cdot M \cdot \abs{z}^{2 \Re \alpha} \exp^{\abs{\Im \alpha}} \left( \Re z \right)^{-2 \Re \alpha}}{2 \cdot \abs{\Gamma(2\alpha)}} 
  \int\limits_0^{\infty} t^{\Re \alpha - 1} \exp^{-\kappa \sqrt{t}} \d t \\
  & \leq \frac{C \cdot M}{2 \abs{\Gamma(2\alpha)} \cos^{2 \Re \alpha}(\delta)} \int\limits_0^{\infty} t^{\Re \alpha - 1} \exp^{-\kappa \sqrt{t}} \d t
 \end{align*}
 where we used the substitution $t:= \left( \Re z \right)^2 s$ and the inequality $\Re z \geq \abs{z} \cos(\delta)$. 
 Further, for $n, k \in \N_0$ we calculate
 \begin{align*}
  \frac{\d ^ n}{\d z^n} A^k  \exp^{-z\sqrt{A+s}} & = (-1)^n A^k (A+s)^{\frac{n}{2}} \exp^{-z\sqrt{A+s}} \\ 
  & = (-1)^n \sum\limits_{l=0}^k \binom{k}{l} (-s)^{k-l} (A+s)^{l+\frac{n}{2}} \exp^{-z\sqrt{A+s}},
 \end{align*}
 which yields holomorphy of the integral by applying again Lemma \ref{lemma:exp_bound} to every single summand in the above sum and combining it with Hille's theorem and 
 differentiation under the integral sign. Concerning the prefactor $z^{2\alpha}$ one just has to note that it is smooth on $S_{(\pi-\omega)/2}$.  
\end{proof}

In general, unless $A$ is bounded, the function $U$ lacks continuity at $z=0$ in the norm-topology of $L(X)$.  
One has strong continuity though. In order to see this we start with a lemma which tells us that we already encountered the function $U$. 

\begin{lemma}
	Let $\omega \in [0, \pi)$ and $A \in \Sec _{\omega}(X)$. Then
	\[
		U(z) = u_z(\sqrt{A})
	\]
	for $z\in S_{(\pi - \omega)/2}$,
	where $u_z$ is the function defined in \eqref{eq:def_u}. 
\end{lemma}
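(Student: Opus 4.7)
The plan is to interpret the integral representation of $u_z$ from Lemma~\ref{lemma:properties_of_u}\ref{item:5} at the operator level, using the composition rule to identify the integrand and a Fubini-type exchange to identify the integrals.

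For each $s \geq 0$ let $f_s\from S_{\omega/2+\delta} \to \C$ be given by $f_s(\mu):=\exp^{-z\sqrt{\mu^2+s}}(\mu^2+s)^{-1/2}$. The composition rule (Theorem~\ref{thm:comp_rule}) applied to $\mu\mapsto \sqrt{\mu^2+s}$, which sends $S_{\omega/2}$ into $S_{\omega/2}$ and at the operator level yields $\sqrt{(\sqrt{A})^2+s}=\sqrt{A+s}$, gives $f_s(\sqrt{A})=\exp^{-z\sqrt{A+s}}(A+s)^{-1/2}$. By Lemma~\ref{lemma:properties_of_u}\ref{item:5} we have the pointwise identity
\[
 u_z(\mu) = \frac{z^{2\alpha}}{2\Gamma(2\alpha)}\int_0^\infty s^{\alpha-1/2} f_s(\mu)\,\d s
\]
on $S_{\omega/2+\delta}$. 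Granting the exchange of the sectorial functional calculus of $\sqrt{A}$ with the scalar $s$-integral, we conclude
\[
 u_z(\sqrt{A}) = \frac{z^{2\alpha}}{2\Gamma(2\alpha)}\int_0^\infty s^{\alpha-1/2} f_s(\sqrt{A})\,\d s = U(z).
\]

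To justify the exchange I would fix $x\in X$, $\phi\in X^*$ and establish the scalar identity $\phi(u_z(\sqrt{A})x)=\phi(U(z)x)$ by Fubini. Decompose $u_z=g_z+(1+\cdot)^{-1}$ with $g_z\in\mathcal{E}[S_{\omega/2}]$: this is possible because $u_z\to 1$ polynomially at $0$ by Lemma~\ref{lemma:properties_of_u}\ref{item:2} and $u_z\to 0$ exponentially at infinity by \ref{item:6}. For $g_z$ the calculus is the contour integral $\phi(g_z(\sqrt{A})x)=\frac{1}{2\pi i}\int_\gamma g_z(\mu)\phi\bigl((\mu-\sqrt{A})^{-1}x\bigr)\,\d\mu$, into which we substitute the integral representation of $u_z$. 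The resulting double integral in $(s,\mu)$ is absolutely convergent thanks to the uniform bounds of Lemma~\ref{lemma:exp_bound} on $\norm{\exp^{-z\sqrt{A+s}}}$ combined with the factor $s^{\alpha-1/2}$ and the resolvent decay on $\gamma$, so scalar Fubini applies and reorganises the double integral as $\phi(U(z)x)$.

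The main obstacle lies in the $\mathcal{E}^{\mathrm{ext}}$-versus-$\mathcal{E}$ bookkeeping: neither $u_z$ nor $f_s$ lies in $\mathcal{E}[S_{\omega/2}]$, since each has a non-zero polynomial limit at the origin. Hence the contour integral captures only the $\mathcal{E}$-part of each, and the $(1+\mu)^{-1}$-corrections in the decompositions of $u_z$ and of each $f_s$ must be tracked separately and seen to match under the $s$-integral. A direct computation confirms this matching, namely
\[
 \frac{z^{2\alpha}}{2\Gamma(2\alpha)}\int_0^\infty s^{\alpha-1/2} f_s(0)\,\d s = \frac{z^{2\alpha}}{2\Gamma(2\alpha)}\cdot\frac{2\Gamma(2\alpha)}{z^{2\alpha}} = 1 = u_z(0),
\]
showing that the coefficients of $(1+\cdot)^{-1}$ on both sides agree; combined with the Fubini exchange for the $\mathcal{E}$-parts and the first-step identification, this yields the claim.
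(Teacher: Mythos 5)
Your proposal is correct and follows essentially the same route as the paper: substitute the integral representation of $u_z$ into the Cauchy-integral defining the functional calculus, handle the nonzero value at $0$ by peeling off a $(1+\cdot)^{-1}$-regularizer on both sides, exchange the $s$-integral with the contour integral by Fubini, and check that the regularizers integrate up to match. The only cosmetic difference is that you run the contour in the $\mu$-plane for the calculus of $\sqrt{A}$, whereas the paper works in the $\lambda$-plane for $A$ via $\lambda\mapsto u_z(\sqrt{\lambda})$; these are equivalent by the composition rule, and your extra step of testing against $\phi\in X^*$ to justify Fubini is a harmless reinforcement of what the paper does implicitly.
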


\begin{proof}
	We have $\sqrt{A} \in \Sec _{\omega/2}(X)$. 
	Moreover, Lemma \ref{lemma:properties_of_u} states that we have $u_z \in \mathcal{E}^{\text{ext}}[S_{\omega/2}]$. 
	By the composition rule in Theorem \ref{thm:comp_rule} we have $[\lambda\mapsto u_z(\sqrt{\lambda})] \in \mathcal{E}^{\text{ext}}[S_\omega]$ and
	$\bigl[\lambda\mapsto u_z(\sqrt{\lambda}) - \tfrac{1}{1+\lambda}\bigr]\in\mathcal{E}[S_\omega]$ and analogously for $s > 0$ and $z \in S_{(\pi - \omega)/2}$ 
	 it holds that $[\lambda \mapsto \frac{\exp^{-z\sqrt{\lambda + s}}}{\sqrt{\lambda + s}}] \in \mathcal{E}^{\rm{ext}}[S_{\omega}]$, 
	$[\lambda \mapsto \frac{\exp^{-z\sqrt{\lambda + s}}}{\sqrt{\lambda + s}} - \frac{\exp^{-z\sqrt{s}}}{(1 + \lambda)\sqrt{s}}] 
	\in \mathcal{E} [S_{\omega}]$. 
	With $\gamma$ as the boundary of a suitable sector of angle $\varphi>\omega$ (see \eqref{def:elem_fcts}), we calculate
	\begin{align*}
		u_z(\sqrt{A}) & = \frac{1}{2\pi i} \int\limits_{\gamma} \left(u_z(\sqrt{\lambda}) - \frac{1}{1+\lambda} \right) (\lambda-A)^{-1}\d \lambda + (1+A)^{-1} \\
		              & = \frac{1}{2\pi i} \int\limits_{\gamma} \left( \frac{z^{2 \alpha}}{2 \Gamma (2 \alpha)} \int\limits_0^{\infty} s^{\alpha - \frac{1}{2}}
	                      \left( \frac{\exp^{-z \sqrt{\lambda + s}}}{\sqrt{\lambda + s }} - \frac{\exp^{-z \sqrt{s}}}{(1+\lambda)\sqrt{s}} \right) \d s \right) (\lambda-A)^{-1}\d \lambda \\
	                  & + (1+A)^{-1} \\
	                  & = \frac{z^{2 \alpha}}{2 \Gamma (2 \alpha)} \int\limits_0^{\infty} s^{\alpha - \frac{1}{2}} \left( \exp^{-z \sqrt{A + s}} (A + s)^{-1/2} -
	                      (1 + A)^{-1} \frac{\exp^{-z \sqrt{s}}}{\sqrt{s}} \right) \d s \\ 
	                  & + (1+A)^{-1} \\
	                  & = U(z). \qedhere
	\end{align*}
\end{proof}

We can now prove the claimed strong continuity. 
More precisely, we are going to show that $U$ is strongly continuous on $\overline{\mathcal{D}(A)}$.
%Our technique is the same as one uses when considering holomorphic semigroups in functional calculus. 

\begin{lemma}
	Let $\omega\in [0,\pi)$, $A \in \Sec _{\omega}(X)$, $\delta \in \bigl[0, (\pi-\omega)/2 \bigr)$ and $x\in X$. Then
	\[
		x \in \overline{\mathcal{D}(A)} \; 
		\Leftrightarrow \; \lim\limits_{\stackrel{z \to 0}{z \in S_{\delta}}} U(z)x=x. 
	\]
	\label{lemma:U_is_strongly_continuous}
\end{lemma}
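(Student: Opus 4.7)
The forward implication ``$\Leftarrow$'' is immediate from Lemma \ref{lemma:U_is_smooth}: since $A^k U(z)\in L(X)$ for every $k\in\N_0$, we have $U(z)x\in\mathcal D(A^\infty)\subseteq\overline{\mathcal D(A)}$ for each $z\in S_{(\pi-\omega)/2}$, so the closed subspace $\overline{\mathcal D(A)}$ contains any norm limit of such vectors.

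For the reverse implication, Lemma \ref{lemma:U_is_smooth} also provides $M_\delta:=\sup_{z\in S_\delta}\|U(z)\|<\infty$, so a standard $3\varepsilon$-argument reduces the task to proving $U(z)x\to x$ on a norm-dense subset of $\overline{\mathcal D(A)}$. I would take this subset to be $\mathcal D(A)$ and exploit the factorisation $\mathcal D(A)=\operatorname{ran}(1+\sqrt A)^{-2}$: writing $x=(1+\sqrt A)^{-2}y$ with $y:=(1+\sqrt A)^2 x\in X$ and invoking the identification $U(z)=u_z(\sqrt A)$ from the preceding lemma, one obtains
\[
U(z)x-x=\Bigl(\frac{u_z-1}{(1+\cdot)^2}\Bigr)(\sqrt A)\,y.
\]
From Lemma \ref{lemma:properties_of_u} the multiplier $\mu\mapsto(u_z(\mu)-1)/(1+\mu)^2$ has polynomial limit $0$ of order $\mu^{2\alpha}$ at the origin and decays like $|\mu|^{-2}$ at infinity, hence belongs to $\mathcal E[S_{\omega/2}]$. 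The right-hand side therefore admits the contour representation
\[
U(z)x-x=\frac{1}{2\pi i}\int_\gamma\frac{u_z(\mu)-1}{(1+\mu)^2}(\mu-\sqrt A)^{-1}y\,d\mu,
\]
with $\gamma$ parameterising the boundary of a sector strictly larger than that of $\sqrt A$.

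The plan is to conclude by dominated convergence in this Bochner integral. The scaling identity $u_z(\mu)=g_\alpha(z\mu)$ with $g_\alpha(w):=w^\alpha K_\alpha(w)/(2^{\alpha-1}\Gamma(\alpha))$ and $g_\alpha(0)=1$ supplies pointwise convergence $u_z(\mu)\to 1$ as $z\to 0$ for every $\mu$ on $\gamma$. For a $z$-uniform majorant I would split the analysis: near the origin, Lemma \ref{lemma:properties_of_u}\ref{2nd_point} gives $|u_z(\mu)-1|\leq C|z\mu|^{2\Re\alpha}\leq C|\mu|^{2\Re\alpha}$ (for $|z|$ bounded by a fixed constant), while away from the origin Lemma \ref{lemma:properties_of_u}\ref{6th_point} bounds $|u_z(\mu)|$ uniformly by $C_3$. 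Combined with the sectorial resolvent bound $\|(\mu-\sqrt A)^{-1}\|=O(|\mu|^{-1})$ and the factor $(1+\mu)^{-2}$, the integrand is majorised by an integrable function of $\mu$ alone, behaving like $|\mu|^{2\Re\alpha-1}$ near $0$ and like $|\mu|^{-3}$ at infinity.

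The main obstacle is arranging this $z$-uniform dominating function. In its natural reading, Lemma \ref{lemma:properties_of_u}\ref{2nd_point} is an asymptotic statement as $\mu\to 0$ for fixed $z$; what makes it uniform in $z$ is precisely the observation that the $z$-dependence of $u_z$ enters only through the product $z\mu$, so that trading $|z|$ against $|\mu|$ yields the $z$-independent bound required for dominated convergence. The remaining ingredients are standard estimates from the sectorial functional calculus, so once this dominating function is in hand the argument closes immediately.
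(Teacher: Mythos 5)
Your proof is correct and follows essentially the same path as the paper's: both reduce to the dense subspace $\mathcal{D}(A)$ via the uniform bound from Lemma \ref{lemma:U_is_smooth}, regularise, and then estimate a contour integral for the resulting elementary function of $\sqrt A$. The only differences are cosmetic: the paper regularises with $(1+A)^{-1}$ (working with $f_z(\lambda)=\frac{u_z(\sqrt\lambda)-1}{1+\lambda}$) and derives an explicit $L^1(\frac{\d s}{s})$ bound of order $|z|^{\Re\alpha}$ on the symbol, whereas you regularise with $(1+\sqrt A)^{-2}$ and close the argument with dominated convergence, obtaining the limit without an explicit rate. You also correctly identify — and make explicit — the point that the paper leaves slightly implicit, namely that the $z$-uniformity of the small-$\mu$ bound comes from $u_z$ depending on $z$ and $\mu$ only through the product $z\mu$.
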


\begin{proof}
     Let $U(z)x\to x$.
	By Lemma \ref{lemma:U_is_smooth} we have $U(z)x \in \mathcal{D}(A^{\infty})$ and therefore 
	$x \in \overline{\mathcal{D}(A)}$. 
	
	Conversely, for $z\in S_{\delta}$ we consider the function $f_z\from S_{\pi - 2\delta} \to \C$ defined by
	\[
		f_z(\lambda) := \frac{u_z(\sqrt{\lambda}) - 1}{(1+\lambda)} = 
		z^{\alpha} \frac{u_z{(\sqrt{\lambda} ) - 1}}{\left(z \lambda \right)^{\alpha}} 
		\frac{\lambda^{\alpha}}{1+\lambda}. 
	\]
	The expansion \eqref{eq:exp1} yields
	\[
	 \frac{u_z{(\sqrt{\lambda} ) - 1}}{\left(z \lambda \right)^{\alpha}} \to c_{\alpha} z^{\alpha}	
	\]
	as $\lambda \to 0$ in $S_{\pi - 2\delta}$. 	
	Hence this part is bounded on the entire sector $S_{\pi - 2\delta}$. 
	It follows that the net $(f_z)$, $z \in S_{\delta},\abs{z} < 1$ is in $\mathcal{E}[S_{\omega}]$ and for all $\varphi \in [0, \pi - 2\delta)$ one has
	\[
	 \int\limits_0^{\infty} \frac{ \abs{f_z \left( s \exp^{i \varphi}\right)} }{s} \d s  \leq C \abs{z}^{\Re \alpha} \int\limits_0^{\infty} 
	 \frac{ s^{\Re{\alpha}}}{s \abs{1 + s \exp^{i\varphi}}} \d s 
	\]
	where $C > 0$ is independent of $z$. Hence,
	\[
	 \lim\limits_{z \to 0} \int\limits_0^{\infty} \frac{ \abs{f_z \left( s \exp^{i \varphi}\right)} }{s} \d s = 0 
	\]
	and therefore, using that
	\[
	 \norm{f_z(A)} \leq \frac{M}{2\pi} \int\limits_0^{\infty} \frac{ \abs{f_z \left( s \exp^{i \varphi}\right)} }{s} \d s
	\]
	where $M \geq 1$ is the non-negativity constant of $A$, it follows that
	\[
		f_z(A) = \left( U(z) - 1 \right) \left( 1+A \right)^{-1} \to 0
	\]
	in $L(X)$ as $z \rightarrow 0$ in $S_{\delta}$ which shows
	\[
	 \lim\limits_{\stackrel{z \to 0}{z \in S_{\delta}}} U(z)x=x
	\] 
	for $x \in \mathcal{D}(A)$. 
	The final claim follows now from the fact that $U(z)$ is uniformly bounded on $S_{\delta}$ by Lemma \ref{lemma:U_is_smooth}, 
	so the limit holds for all $x\in\overline{\mathcal{D}(A)}$. 
\end{proof}

\begin{prop}
    Let $A\in \Sec_{\omega}(X) $.
	Then $U$ solves the ordinary differential equation
	\[
		U''(z) + \frac{1-2\alpha}{t}U'(z) = AU(z) \quad (z \in S_{\frac{\pi - \omega}{2}}),
	\]
	in $L(X)$. 
	\label{prop:U_is_solution}
\end{prop}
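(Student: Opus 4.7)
The plan is to reduce the operator-valued ODE to the corresponding scalar identity for $u_z(\lambda)$ and then lift that identity to $L(X)$ using the functional calculus of $\sqrt{A}$, relying on the fact that $U(z)=u_z(\sqrt A)$.

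First I would verify the scalar identity
\[
\partial_z^2 u_z(\lambda) + \frac{1-2\alpha}{z}\partial_z u_z(\lambda) = \lambda^2 u_z(\lambda)
\]
for every fixed $\lambda\in S_{\omega+\delta}$ and $z\in S_{(\pi-\omega)/2}$. This reduces to the modified Bessel equation. Writing $u_z(\lambda)=\tilde u(\lambda z)$ with $\tilde u(t):=\frac{1}{2^{\alpha-1}\Gamma(\alpha)}\,t^\alpha K_\alpha(t)$, a short direct computation from $t^2 K_\alpha''(t)+tK_\alpha'(t)-(t^2+\alpha^2)K_\alpha(t)=0$ yields $\tilde u''(t)+\frac{1-2\alpha}{t}\tilde u'(t)=\tilde u(t)$. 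The chain rule applied to $z\mapsto \tilde u(\lambda z)$ then produces the scalar ODE above.

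Second, I would lift this to operators by differentiating the Bochner integral \eqref{defi_U} with respect to $z$. Lemma \ref{lemma:U_is_smooth} already provides holomorphy of $A^kU$ in $L(X)$ for every $k\in\N_0$, and the pointwise derivatives of the integrand are controlled by the bounds of Lemma \ref{lemma:exp_bound}; hence Hille's theorem justifies interchanging derivative and integral, and the resulting integrals can be recognised (by exactly the same manipulation as in the proof of $U(z)=u_z(\sqrt A)$) as
\[
U'(z) = (\partial_z u_z)(\sqrt A), \qquad U''(z) = (\partial_z^2 u_z)(\sqrt A).
\]

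Finally, by linearity of the functional calculus together with the scalar identity from step one,
\[
U''(z)+\frac{1-2\alpha}{z}U'(z) = \bigl[\lambda\mapsto \lambda^2 u_z(\lambda)\bigr](\sqrt A).
\]
Applying the product rule / composition rule of the sectorial calculus with $f(\lambda)=\lambda^2$ and $g=u_z$, and using that $U(z)X\subseteq \mathcal D(A^\infty)\subseteq \mathcal D(A)$ by Lemma \ref{lemma:U_is_smooth}, the right-hand side equals $(\sqrt A)^2 u_z(\sqrt A) = A\,U(z)$ as an identity in $L(X)$, completing the proof.

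The main obstacle I anticipate is the careful bookkeeping in step two: justifying the differentiation under the Bochner integral, and then matching the resulting operator-valued integrals to the functional-calculus expressions $(\partial_z^k u_z)(\sqrt A)$. Both issues are handled by the exponential decay estimates of Lemma \ref{lemma:exp_bound} (which make dominated convergence applicable term by term) and by the same regularization trick used previously to show $U(z)=u_z(\sqrt A)$, so the work should be routine rather than conceptually novel.
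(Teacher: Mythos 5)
Your proposal is correct but takes a genuinely different route from the paper's. The paper verifies the ODE directly on the Bochner integral \eqref{defi_U}: it sets $G(z):=z^{2\alpha}\int_0^\infty F(z,s)\,\d s$ with $F(z,s)=s^{\alpha-\frac12}\exp^{-z\sqrt{A+s}}(A+s)^{-\frac12}$, differentiates $G$ twice in $z$ under the integral, plugs the result into the ODE, and observes that the surviving terms cancel after an integration by parts in $s$ together with Hille's theorem; the identity $U(z)=u_z(\sqrt A)$ plays no role there. You instead reduce to the scalar modified-Bessel equation $\partial_z^2 u_z(\lambda)+\frac{1-2\alpha}{z}\partial_z u_z(\lambda)=\lambda^2 u_z(\lambda)$ and push it through the sectorial calculus of $\sqrt A$. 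That factorisation makes it transparent \emph{why} the ODE holds, at the cost of two extra pieces of bookkeeping. The final step $\bigl[\lambda\mapsto\lambda^2 u_z(\lambda)\bigr](\sqrt A)=A\,U(z)$ is fine: $\lambda^2u_z(\lambda)\in\mathcal E[S_{\omega/2}]$ since $u_z$ decays exponentially and has polynomial limit $1$ at $0$ (Lemma \ref{lemma:properties_of_u}), and the product/composition rules plus $U(z)X\subseteq\mathcal D(A^\infty)$ (Lemma \ref{lemma:U_is_smooth}) give an equality rather than a mere inclusion. The genuine work you are deferring is the claim $U'(z)=(\partial_z u_z)(\sqrt A)$ and $U''(z)=(\partial_z^2 u_z)(\sqrt A)$: to justify this you would either repeat the Cauchy-integral manipulation used in the proof of $U(z)=u_z(\sqrt A)$ now applied to $\partial_z u_z$ and $\partial_z^2 u_z$ (which still lie in $\mathcal E^{\mathrm{ext}}[S_{\omega/2}]$, as can be read off from the series and the integral representation), or show that the difference quotients $h^{-1}\bigl(u_{z+h}-u_z\bigr)$ converge to $\partial_z u_z$ in a sense that makes the corresponding bounded operators converge. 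Either route is comparable in effort to the paper's direct computation, so your plan is sound and somewhat more conceptual, while the paper's is more computational but shorter and self-contained.
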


\begin{proof}
 By linearity we can forget about constant prefactors. For $z \in S_{(\pi - \omega)/2}$ define
 \[
  G(z) := z^{2 \alpha} \int\limits_0^{\infty} F(z,s) \d s \quad \text{with} \quad 
  F(z,s):= s^{\alpha - \frac{1}{2}} \exp^{-z \sqrt{A + s}} (A + s)^{-\frac{1}{2}}.  
 \]
 One calculates
 \[
  G'(z) = \frac{2\alpha}{z}G(z) - z^{2\alpha}\int\limits_0^{\infty} \sqrt{A+s} \cdot F(z,s) \d s
 \]
 and
 \[
 \begin{split}
  G''(z) =  - & \, \frac{2\alpha}{z^2}G(z) + \frac{2\alpha}{z} G'(z) + z^{2\alpha}\int\limits_0^{\infty} (A+s) \cdot F(z,s) \d s \\
            - \, & \frac{2 \alpha}{z} z^{2\alpha}\int\limits_0^{\infty} \sqrt{A+s} \cdot F(z,s) \d s. 
 \end{split}
 \]
 Thus, for $z \in S_{(\pi - \omega)/2}$ we obtain
 \[
 \begin{split}
  G''(z) + \frac{1-2\alpha}{z} G'(z) & = - z^{2\alpha} \int\limits_0^{\infty} \left( \frac{1-2\alpha}{z} s^{\alpha - \frac{1}{2}} \exp^{-z \sqrt{A + s}}\right) \d s + AG(z) \\
  & = - z^{2\alpha} \int\limits_0^{\infty} \frac{2}{z} \frac{\d}{\d s} \left( s^{\alpha + \frac{1}{2}} \exp^{-z \sqrt{A+s}} \right) \d s + AG(z) \\
  & = AG(z), 
 \end{split}
 \]
 where we used integration by parts and afterwards Hille's Theorem in the last two steps.    
\end{proof}

As a consequence we get the following theorem, which is the main result of this section.

\begin{thm}
	If $\mathcal{D}(A)$ is dense in $X$ the function $u(z) := U(z)x$ is a solution to \eqref{ODE}. 
	\label{thm:existence_of_cesp_solution}
\end{thm}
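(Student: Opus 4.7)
The plan is to verify the four conditions in the definition of a solution of \eqref{ODE} for $u(z) := U(z)x$ one by one, using the results already established for $U$.

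First I would handle continuity and boundedness on $[0,\infty)$. By Lemma \ref{lemma:U_is_smooth}, $U$ is holomorphic into $L(X)$ on $S_{(\pi-\omega)/2}$, so $u$ is in particular continuous on $(0,\infty)$, and uniformly bounded on the closed subsector $\overline{S_0} \cap S_\delta$ for any $\delta \in (0,(\pi-\omega)/2)$; in particular, $\sup_{t>0}\|u(t)\| < \infty$. Continuity at $t=0$ is exactly Lemma \ref{lemma:U_is_strongly_continuous}: since $\mathcal{D}(A)$ is dense, $\overline{\mathcal{D}(A)} = X$, so $U(z)x \to x$ as $z \to 0$ along any subsector; evaluating along $z=t>0$ gives $u(t)\to x$, establishing both continuity at $0$ and the initial condition $u(0)=x$.

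Next I would verify the regularity needed for the ODE. For $t>0$, Lemma \ref{lemma:U_is_smooth} says $A^k U(t) \in L(X)$ for every $k\in\N_0$, hence $u(t) = U(t)x \in \mathcal{D}(A^\infty) \subseteq \mathcal{D}(A)$. Holomorphy of $A U \from S_{(\pi-\omega)/2} \to L(X)$ implies that $t\mapsto Au(t)$ is continuous on $(0,\infty)$, in particular locally bounded and therefore in $L^1_{\mathrm{loc}}((0,\infty);X)$.

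Finally I would deduce the distributional ODE. Proposition \ref{prop:U_is_solution} gives the operator identity
\[
U''(z) + \frac{1-2\alpha}{z} U'(z) = AU(z)
\]
on $S_{(\pi-\omega)/2}$, which, applied to $x$, yields the pointwise identity
\[
u''(t) + \frac{1-2\alpha}{t} u'(t) = Au(t) \qquad (t>0)
\]
in $X$, all three terms being continuous on $(0,\infty)$. Testing against $\phi \in C^\infty_{\mathrm{c}}((0,\infty))$ and integrating by parts twice on the left-hand side (legal because $u$ is smooth on the support of $\phi$) then produces the distributional formulation required in the definition of a solution. This is essentially a bookkeeping step; the only place where a genuine obstruction could arise is the boundary behaviour at $t=0$, but since $\phi$ has compact support in $(0,\infty)$ no boundary terms appear, so I don't anticipate a real difficulty — all the substantive work has already been done in Lemmas \ref{lemma:U_is_smooth}, \ref{lemma:U_is_strongly_continuous} and Proposition \ref{prop:U_is_solution}.
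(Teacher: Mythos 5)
Your proposal is correct and follows essentially the same route as the paper's own (much terser) proof, which likewise combines Proposition \ref{prop:U_is_solution} for the ODE, Lemma \ref{lemma:U_is_strongly_continuous} with density of $\mathcal{D}(A)$ for continuity at $0$, and Lemma \ref{lemma:U_is_smooth} for boundedness. You simply spell out the intermediate checks (membership in $\mathcal{D}(A)$, local integrability of $Au$, passage to the distributional form) that the paper leaves implicit.
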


\begin{proof}
	By Proposition \ref{prop:U_is_solution}, $u$ solves the ODE since $U$ does so in $L(X)$. 
	Strong continuity follows from Lemma \ref{lemma:U_is_strongly_continuous} together with the assumption that $\overline{\mathcal{D}(A)} = X$. 
	The boundedness of $u$ is a consequence of Lemma \ref{lemma:U_is_smooth}. 
\end{proof}

\section{Uniqueness of the solution}

In this section we shall show the uniqueness of the solution to \eqref{ODE} found in Theorem \ref{thm:existence_of_cesp_solution}. 
In order to obtain uniqueness we will exploit the asymptotic behaviour of $I_{\alpha}$ on a fixed sector $S_{\delta}$. Remember, by 
\cite[\S 2.1(iii), \S 10.40(i)]{nist2019}, we have
\begin{equation}
	I_{\alpha}(z) \sim \frac{\exp^{z}}{\sqrt{2 \pi z}} \text{ as } \abs{z} \to \infty. 
	\label{eq:asymptotics}
\end{equation}
Let us explain the basic idea for showing uniqueness of the solution.
First, we uniquely solve a modified problem similar to the one under consideration but with initial conditions for $u$ and `$u'$' (actually a scaled version of it). 
The resulting unique solution will in general be unbounded unless a constraint couples the initial data leading to a Dirichlet-to-Neumann operator which will be studied in the next section. 

Again, let $X$ be a Banach space and $\alpha\in \C_{0<\Re<1}$.
As a first step, we show uniqueness of the solution of \eqref{ODE} for bounded sectorial operators. 
By approximation we will then generalise this to unbounded operators. 
In the last step the asymptotic behaviour of $I_{\alpha}$ together with the assumption of a bounded solution for \eqref{ODE} will give uniqueness as sketched above. 

Note that for a sectorial operator $A$ and $c \geq 0$ the operator $\exp^{-c\sqrt{A}}$ is injective. 
We set
\[
 \exp^{c\sqrt{A}} = \left( \exp^{-c\sqrt{A}} \right)^{-1}
\]
with $\mathcal{D}(\exp^{c\sqrt{A}}) = R(\exp^{-c\sqrt{A}})$. 

Throughout this section, we will make use of the functions $v_z$ and $w_z$ introduced in Definition \ref{def:vw}.

We start with some preparation.
\begin{lemma}
 Let $\omega \in [0, \pi/2)$, $A \in \Sec_{\omega}(X)$ and $0 < s < t$. 
 Then we have $\mathcal{D} \left( v_t(A) \right) \subseteq \mathcal{D} \left( v_s(A) \right)$ and 
 $\mathcal{D} \left( w_t(A) \right) \subseteq \mathcal{D} \left( w_s(A) \right)$. 
 \label{lemma:nested_domains}
\end{lemma}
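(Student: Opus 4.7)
The plan is to derive an algebraic identity of the form
\[
v_s = F + \frac{C_s}{C_t}\, [\lambda\mapsto\exp^{-(t-s)\lambda}]\, v_t
\]
with $F\in \mathcal{E}^{\text{ext}}[S_{\omega}]$ and constants $C_z := \Gamma(1-\alpha)z^{\alpha - \frac{1}{2}}/(2^{\alpha}\sqrt{2\pi})$. Since the prefactor $[\lambda\mapsto\exp^{-(t-s)\lambda}]$ lies in $\mathcal{E}^{\text{ext}}[S_{\omega}]$ and the corresponding operator $\exp^{-(t-s)A}$ is bounded (because $A \in \Sec_\omega(X)$ with $\omega<\pi/2$), the inclusion $\mathcal{D}(v_t(A))\subseteq \mathcal{D}(v_s(A))$ should follow from functional-calculus bookkeeping.

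To produce the identity, fix any $c>t$, so that Lemma~\ref{lemma:prep_domain_v_and_w} simultaneously supplies $f_{1,s}, f_{1,t}\in\mathcal{E}^{\text{ext}}[S_{\omega}]$ with
\[
v_z(\lambda) = f_{1,z}(\lambda) + C_z \bigl(1-\exp^{-c\lambda}\bigr)\lambda^{\alpha - \frac{1}{2}}\exp^{z\lambda} \qquad (z\in\{s,t\}).
\]
The factorisation $\exp^{s\lambda} = \exp^{-(t-s)\lambda}\exp^{t\lambda}$ shows that the singular parts of $v_s$ and $v_t$ differ only by the factor $\exp^{-(t-s)\lambda}$; substituting this into the $s$-identity and eliminating $(1-\exp^{-c\lambda})\lambda^{\alpha-\frac{1}{2}}\exp^{t\lambda} = (v_t-f_{1,t})/C_t$ yields the displayed identity with
\[
F := f_{1,s} - \frac{C_s}{C_t}\,[\lambda\mapsto \exp^{-(t-s)\lambda}]\, f_{1,t} \in \mathcal{E}^{\text{ext}}[S_{\omega}].
\]

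To read off the domain inclusion, take $x\in \mathcal{D}(v_t(A))$ and set $y := F(A)x + (C_s/C_t)\exp^{-(t-s)A}v_t(A)x$, which is well-defined since $F(A)$ and $\exp^{-(t-s)A}$ are bounded. Using the injective regularizer $e(\lambda):=\exp^{-c\lambda}$ of $v_s$ together with the fact that $[\lambda\mapsto\exp^{-(c+t-s)\lambda}]v_t\in\mathcal{E}^{\text{ext}}[S_{\omega}]$ (as $c+t-s>t$), multiplying the identity by $e$ and applying the multiplicative calculus on $\mathcal{E}^{\text{ext}}[S_{\omega}]$ yields $(ev_s)(A)x = e(A)y$; injectivity of $e(A)=\exp^{-cA}$ then forces $v_s(A)x = y$, so $x\in\mathcal{D}(v_s(A))$. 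The statement for $w$ is proved verbatim using $f_{2,s}, f_{2,t}$ and the singular parts $(1-\exp^{-c\lambda})^{2}\lambda^{-\alpha-\frac{1}{2}}\exp^{z\lambda}$, which again satisfy the same $\exp^{-(t-s)\lambda}$-factor relation. The only real obstacle is spotting the algebraic rearrangement: once Lemma~\ref{lemma:prep_domain_v_and_w} has isolated the singular exponential asymptotics of $v_s$ and $v_t$ up to a common bounded factor, the domain comparison is essentially automatic.
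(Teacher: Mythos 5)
Your proof is correct, and it takes a genuinely different (arguably more streamlined) route than the paper's, even though both hinge on the same Lemma~\ref{lemma:prep_domain_v_and_w}. The paper isolates the singular part $g_s(\lambda) := v_s(\lambda) - f_{1,s}(\lambda)$ and then proves the operator-level \emph{equality} $g_s(A) = \exp^{sA}h(A)$ with $h(\lambda) = -C_s(1-\exp^{-c\lambda})\lambda^{\alpha-1/2}$; establishing that this is an equality (not merely the trivial inclusion $\exp^{sA}h(A)\subseteq g_s(A)$) requires an auxiliary limit argument with $\exp^{-\varepsilon A}$ as $\varepsilon\to 0$. With that factorisation in hand, the paper writes $\mathcal{D}(v_s(A))=\{x\in\mathcal{D}(h(A)) \mid h(A)x\in\mathcal{D}(\exp^{sA})\}$ and invokes the monotonicity $\mathcal{D}(\exp^{tA})\subseteq\mathcal{D}(\exp^{sA})$. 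You instead exploit $\exp^{s\lambda}=\exp^{-(t-s)\lambda}\exp^{t\lambda}$ at the level of functions to express $v_s$ directly as $F + (C_s/C_t)\,[\lambda\mapsto\exp^{-(t-s)\lambda}]\,v_t$ with $F\in\mathcal{E}^{\text{ext}}[S_\omega]$ (using that $\mathcal{E}^{\text{ext}}[S_\omega]$ is an algebra), and then read off the domain inclusion by multiplying with the injective regularizer $e=\exp^{-c\cdot}$ and using multiplicativity of the calculus on $\mathcal{E}^{\text{ext}}[S_\omega]$. This bypasses both the separate verification $\mathcal{D}(g_s(A))=\mathcal{D}(v_s(A))$ and the $\varepsilon$-limit argument, and yields the slightly stronger conclusion $v_s(A)x = F(A)x + (C_s/C_t)\exp^{-(t-s)A}v_t(A)x$ for $x\in\mathcal{D}(v_t(A))$. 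On the other hand, the paper's version identifies $\mathcal{D}(v_s(A))$ as a fixed sublevel set of $\exp^{sA}$ (with $h$ not depending on $s$ up to scalar), which makes the $s$-dependence of the domain transparent in one stroke; your approach compares two given values $s<t$ at a time. Both are valid, and yours is the shorter route to the stated lemma.
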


\begin{proof}
 We will only proof the statement for $v_t(A)$ and $v_s(A)$, respectively. 
 The second case is similar. 
 Consider the function $f_{1,s}$ from Lemma \ref{lemma:prep_domain_v_and_w}.
 By this lemma, $f_{1,s}(A) \in L(X)$. 
 Let $x \in \mathcal{D} \bigl(v_s(A) \bigr)$ and choose $d > s$.
 Write $f_{1,s}(\lambda) = v_s(\lambda) + g_s(\lambda)$, i.e., 
 \[
  g_s(\lambda) := -\frac{\Gamma(1-\alpha)s^{\alpha - \frac{1}{2}}}{2^\alpha\sqrt{2\pi}} (1-\exp^{-c\lambda})\lambda^{\alpha - \frac{1}{2}}\exp^{s\lambda},
 \] 
 and observe that $g_s \in \Mer_A$. 
 Then $\exp^{-d A}x \in \mathcal{D}\bigl( g_s(A) \bigr)$ and
 \[
  \exp^{-d A} \bigl( v_s(A)x - f_{1,s}(A)x \bigr) = g_s(A) \exp^{-d A}x. 
 \]  
It follows that $x \in \mathcal{D}\bigl( g_s(A) \bigr)$ and by symmetry of the argument one obtains $\mathcal{D}\bigl( g_s(A) \bigr) = \mathcal{D} \bigl(v_s(A) \bigr)$. 
In particular, $\mathcal{D}\bigl( g_s(A) \bigr)$ is independent of the constant $c > s$ appearing in its definition (cf.\ the definition of $f_{1,s}$ in Lemma 
\ref{lemma:prep_domain_v_and_w}). 
Next observe that if we define $h: S_{\omega + \delta} \to \C$ by 
\[
 h(\lambda) := -\frac{\Gamma(1-\alpha)s^{\alpha - \frac{1}{2}}}{2^\alpha\sqrt{2\pi}}\left(1 - \exp^{-c\lambda} \right)\lambda^{\alpha - \frac{1}{2}},
\]
we have $h \in \Mer_A$ and by general principles
\[
 \exp^{sA} h(A) \subseteq g_s(A). 
\]
We claim that equality holds. 
In order to see this let $x \in \mathcal{D} \bigl( g_s(A) \bigr)$ be given. 
Further let $\varepsilon >0$. 
Then $\exp^{-\varepsilon A}(1+A)^{-1}x \in \mathcal{D} \bigl( h(A) \bigr)$ and 
\[
 h(A) \exp^{-\varepsilon A}(1+A)^{-1}x = \exp^{-\varepsilon A} \exp^{-s A} (1+A)^{-1} g(A)x. 
\]
Since the analytic semigroup $\left(\exp^{-\varepsilon A} \right)$ is strongly continuous on $\mathcal{D}(A)$ and $h(A)$ is closed in $X$ we can let $\varepsilon \to 0\rlim$
and afterwards may apply $(1 + A)$ to get
\[
 h(A)x = \exp^{-s A }g_s(A)x
\]
which yields the equality claim. 
Finally it follows that
\begin{align*}
 \mathcal{D} \bigl( v_s(A) \bigr) = \mathcal{D} \bigl( \exp^{sA} h(A) \bigr) & = 
 \left\{ x \in \mathcal{D} \bigl( h(A) \bigr) \mid h(A)x \in \mathcal{D} ( \exp^{sA} ) \right\} \\
 & \supseteq \left\{ x \in \mathcal{D} \bigl( h(A) \bigr) \mid h(A)x \in \mathcal{D} ( \exp^{tA} ) \right\} = \mathcal{D} \bigl( v_t(A) \bigr)
\end{align*}
for $s < t$ since in this situation $\mathcal{D} \bigl( \exp^{tA} \bigr) \subseteq \mathcal{D} \bigl( \exp^{sA} \bigr)$. 
This yields the claim of the lemma. 
\end{proof}

\begin{lemma}
 Let $\omega \in [0, \pi)$,  $A \in \Sec_{\omega}(X)$, $c>0$, $x,y \in \mathcal{D} ( \exp^{c \sqrt{A}} )$ and $\delta\in \bigl( 0, (\pi-\omega)/2 \bigr)$.
 Then it holds that 
 \[
  [z \mapsto v_z(\sqrt{A})x], \, [z \mapsto w_z(\sqrt{A})y] \in \Hol \Bigl( S_{\frac{\pi-\omega}{2} - \delta} \cap \C_{0< \Re < d} \, ; \, \mathcal{D}(A^{\infty}) \Bigr), 
 \]
 where 
 \[
  0 < d < \frac{c}{ 1 + \tan \left( \frac{\pi - \omega}{2} - \delta \right) \cdot \tan(\frac{\omega}{2}+\delta) }.
 \] 
 Furthermore, $u\from S_{(\pi-\omega)/2 - \delta}\to \mathcal{D}(A^{\infty})$ defined by
 \[
  u(z) := v_{z}(\sqrt{A})x + w_{z}(\sqrt{A})y,
 \]
 solves
 \begin{align*}
    u''(z) + \frac{1-2\alpha}{z}u'(z) & = Au(z) \quad (z \in S_{(\pi-\omega)/2 - \delta} \cap \C_{0< \Re < d}),\\
    \lim\limits_{z \to 0} u(z) & = x,\\
    \lim_{z\to 0\rlim} z^{1-2\alpha} u'(z) & = y. 
 \end{align*} 
 \label{lemma:solution_to_modified_problem}
\end{lemma}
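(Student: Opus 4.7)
The plan is to realise $v_z(\sqrt{A})x$ and $w_z(\sqrt{A})y$ as bounded functional-calculus expressions applied to auxiliary vectors obtained by preregularisation with $\exp^{-c\sqrt{A}}$, and then to transport the analytic and differential statements from the scalar functions $v_z,w_z$ through this representation to the operator level.

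First I would fix $z\in S_{(\pi-\omega)/2-\delta}\cap\C_{0<\Re<d}$ and check the hypothesis of Lemma~\ref{lemma:properties_of_vw}\ref{lemma:properties_of_vw:item:4} applied to $\sqrt{A}\in\Sec_{\omega/2}(X)$: the relevant condition reads $c>\Re z\cdot(1+\tan(|\arg z|)\tan(\omega/2+\delta'))$ for some small $\delta'>0$, and the choice of $d$ makes this hold with $\delta'=\delta$. Consequently, $[\lambda\mapsto\exp^{-c\lambda}v_z(\lambda)]$ and $[\lambda\mapsto\exp^{-c\lambda}w_z(\lambda)]$ lie in $\mathcal{E}^{\textrm{ext}}[S_{\omega/2}]$, and multiplying by a monomial $\lambda^k$ preserves this because the exponential decay swallows any polynomial factor at infinity while the behaviour at the origin remains polynomial. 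Writing $x=\exp^{-c\sqrt{A}}\tilde{x}$, $y=\exp^{-c\sqrt{A}}\tilde{y}$ with $\tilde{x},\tilde{y}\in X$, the standard identities of the extended calculus give
\[
(\sqrt{A})^k v_z(\sqrt{A})x = [\lambda\mapsto\lambda^k\exp^{-c\lambda}v_z(\lambda)](\sqrt{A})\,\tilde{x}\in X
\]
for every $k\in\N_0$, and analogously for $w_z(\sqrt{A})y$. This places both summands in $\mathcal{D}(A^{\infty})$ and reduces everything that follows to statements about bounded elementary functional-calculus expressions applied to the fixed vectors $\tilde{x},\tilde{y}$.

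For holomorphy I would differentiate in $z$ inside the Dunford-type integral
\[
[\lambda\mapsto\lambda^k\exp^{-c\lambda}v_z(\lambda)](\sqrt{A})=\tfrac{1}{2\pi i}\int_{\gamma}\lambda^k\exp^{-c\lambda}v_z(\lambda)(\lambda-\sqrt{A})^{-1}\,\d\lambda;
\]
the power-series expansions \eqref{eq:exp3}, \eqref{eq:exp4} together with the asymptotic bounds of Lemma~\ref{lemma:properties_of_vw}\ref{lemma:properties_of_vw:item:4}, chosen uniformly on a complex neighbourhood of any fixed $z_0$, make this legitimate and yield holomorphy in every seminorm of $\mathcal{D}(A^{\infty})$. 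For the ODE itself I would exploit that the modified Bessel equation satisfied by $I_{\pm\alpha}$ translates into the pointwise identities $\partial_z^2 v_z(\lambda)+\tfrac{1-2\alpha}{z}\partial_z v_z(\lambda)=\lambda^2 v_z(\lambda)$ and analogously for $w_z$; differentiating the integral representation twice in $z$ and using $(\sqrt{A})^2=A$ on the regularised object then produces $u''(z)+\tfrac{1-2\alpha}{z}u'(z)=Au(z)$.

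Finally the initial conditions follow from Lemma~\ref{lemma:properties_of_vw}\ref{lemma:properties_of_vw:item:3}, which provides the pointwise limits $v_z(\lambda)\to 1$, $w_z(\lambda)\to 0$, $-z^{1-2\alpha}\partial_z v_z(\lambda)\to 0$ and $z^{1-2\alpha}\partial_z w_z(\lambda)\to 1$ as $z\to 0\rlim$. I expect the main obstacle to be transferring these scalar limits through the functional calculus: my plan is to use dominated convergence inside the Dunford integral, with the uniform exponential bounds of Lemma~\ref{lemma:properties_of_vw}\ref{lemma:properties_of_vw:item:4} serving as integrable majorants valid simultaneously for all $z$ in a small neighbourhood of $0$ inside the prescribed sector-strip. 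This yields $v_z(\sqrt{A})x\to x$, $w_z(\sqrt{A})y\to 0$ and the corresponding statements for the scaled derivatives, from which $\lim_{z\to 0}u(z)=x$ and $\lim_{z\to 0\rlim} z^{1-2\alpha}u'(z)=y$ follow.
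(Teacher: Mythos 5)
Your proposal follows essentially the same route as the paper: regularise $x,y$ via $\exp^{-c\sqrt{A}}$, check that the hypothesis of Lemma~\ref{lemma:properties_of_vw}\ref{lemma:properties_of_vw:item:4} (applied to $\sqrt{A}\in\Sec_{\omega/2}(X)$) is met for the admissible range of $z$ fixed by $d$, reduce to bounded elementary functional-calculus expressions $[\lambda\mapsto\lambda^k\exp^{-c\lambda}v_z(\lambda)](\sqrt{A})$, and then transfer holomorphy, the scalar Bessel ODE, and the boundary limits by differentiating under the Dunford integral and invoking dominated convergence. The only presentational difference from the paper is that you work directly with the contour integral and the power-series/asymptotic bounds, whereas the paper packages the uniform control of $z$-derivatives via the explicit NIST identity expressing $I_{\alpha}^{(n)}$ as a finite sum of Bessel functions, and handles the $\mathcal{D}(A^{\infty})$-topology statement by splitting off an intermediate factor $\exp^{-\tilde c\sqrt{A}}$ with $d<\tilde c<c$; both of these are details you gesture at ("uniform bounds on a neighbourhood of $z_0$") but do not spell out, and would need to be made precise to complete the argument.
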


\begin{proof}

 As abbreviation we set $G := S_{(\pi-\omega)/2 - \delta} \cap \C_{0< \Re < d}$.  
 Going back to the definition of the two functions $v_z$ and $w_z$ one sees that for fixed $n \in \N$, $z \in G$ the functions 
 $[\lambda \mapsto \partial_z^n v_z(\lambda)]$ and $[\lambda \mapsto \partial_z^n w_z(\lambda)]$ are still entire in $\lambda$, i.e., polynomial limits at $0$ still exist.  
 As for infinity note that a derivative of arbitrary order of a Bessel function may be expressed as a finite sum of Bessel functions (of the same kind) of varying order 
 (\cite[\S 10.29.5]{nist2019}), i.e.,
 \[
  \frac{\d^n }{\d z^n} \exp^{i \alpha \pi} I_{\alpha}(z) = \frac{1}{2^n} \sum\limits_{k=0}^n \binom{n}{k} \exp^{i(\alpha-n+2k)\pi} I_{\alpha-n+2k}(z). 
 \] 
% Also, the asymptotic behaviour of modified Bessel functions is almost independent of its order at least on the lowest level. 
% By this, we mean that 
% \[
%  \forall \alpha \in \C, \, \varepsilon \in \left( 0 , \frac{\pi}{2} \right) \, \exists C, R > 0 \, \forall z \in S_{\varepsilon}, \, \abs{z} > R: \, 
%  \abs{I_{\alpha}(z) - \frac{\exp^z}{\sqrt{2\pi z}}} \leq \frac{C}{\abs{z}} 
% \] 
% (\cite[\S 10.40.1, 10.17.1]{nist2019}). 
 It follows that for given $n \in \N_0$ we have %\alpha \in \C, \, eventuell unten wieder einfügen, aber ich denke nicht, dass wir das brauchen. War quatsch. 
 \[
  \forall \varepsilon \in \left( 0 , \frac{\pi}{2} \right) \, \exists C, R > 0, \, D \in \C \, \forall z \in S_{\varepsilon}, \abs{z} > R: \, 
  \abs{I^{(n)}_{\alpha}(z) - \frac{D \cdot \exp^z}{\sqrt{2\pi z}}} \leq \frac{C}{\abs{z}}. 
 \]
 From $z \in G$ we get
 \[
  \Re z < d < \frac{c}{1 + \tan ( \frac{\pi-\omega}{2} - \delta) \cdot \tan (\frac{\omega}{2} + \delta)} \quad \text{and} \quad \abs{\arg z} < \frac{\pi - \omega}{2} - \delta. 
 \]
 By Lemma \ref{lemma:properties_of_vw}, the function $[\lambda\mapsto \exp^{-c\lambda}]$ anchors $v_z$ and $w_z$ for all $z \in G$, and, moreover, also all 
 derivatives $\partial^n_z v_z$ and $\partial^n_z w_z$ for $n\in\N$.  
 To sum up we have 
 \[
  \forall n \in \N_0, \, z \in G: \, \partial^n_z v_z, \, \partial^n_z w_z \in \Mer_{\sqrt{A}}. 
 \]
 Since $x \in \mathcal{D} \left( \exp^{c\sqrt{A}} \right)$ it especially follows that $x \in \mathcal{D} \bigl( v_z(\sqrt{A}) \bigr)$ and analogously for $w_z$ and $y$. 
 One concludes that
 \[
  v_z(\sqrt{A})x = \Bigl[ \lambda \mapsto \exp^{-c \lambda} \bigl( v_z(\lambda) - v_z(0) \bigr) \Bigr](\sqrt{A}) \exp^{c \sqrt{A}}x + v_z(0)x
 \]
 which implies the differentiability of the function $z \mapsto v_z(\sqrt{A})x$. 
 Using dominated convergence (pay attention that the given $c>0$ works uniformly for all $z \in G$) one finds
 \[
  \partial_z^n \left( v_z(\sqrt{A})x \right) = \left( \partial^n_z v_z \right)(\sqrt{A})x
 \]
 and analogously for $w$ and $y$. 
 The fact that everything works also in the topology of $\mathcal{D}(A^{\infty})$ is due to
 \[
  \forall k \in \N_0: \, A^k x = A^k \exp^{-c\sqrt{A}} \tilde{x} = \exp^{-\tilde{c} \sqrt{A}} A^k \exp^{-(c-\tilde{c}) \sqrt{A}} \tilde{x}
 \]
 for some $\tilde{c}$ such that $d< \tilde{c} < c$ and $\tilde{x} \in X$. 
 So $x \in \mathcal{D}(A^k)$ and $A^k x \in \mathcal{D} \left( \exp^{\tilde{c} \sqrt{A}} \right)$. 
 Hence, 
 \[
  A^k v_z(\sqrt{A})x = \Bigl[\lambda \mapsto \exp^{-\tilde{c} \lambda} \bigl( v_z(\lambda) - v_z(0) \bigr) \Bigr](\sqrt{A}) \exp^{\tilde{c} \sqrt{A}} A^k x + v_z(0) A^k x
 \]
 implies differentiability in the topology of $\mathcal{D}(A^k)$. 
 Moreover, we obtain
 \[
  \lim\limits_{z \to 0} v_z(\sqrt{A})x = x \; \text{ in } \mathcal{D}(A^{\infty}), 
 \]
 by dominated convergence. 
 That the differential equation is fulfilled follows from the fact that the used functions solve it in the scalar valued case (see for what is written below Remark 
 \ref{remark:better_regularity}) and by what was said above. 
 So,
 \begin{align*}
  & u''(z) + \frac{1-2\alpha}{z}u'(z) - Au(z) \\ = \; & \Bigl[ \lambda \mapsto 
  \Bigl( \underbrace{\partial_z^2 v_z(\lambda) + \frac{1-2\alpha}{z} \partial_z v_z(\lambda) - \lambda^2 v_z(\lambda)}_{= 0} \Bigr) \Bigr](\sqrt{A})x \\ 
  + \; & \Bigl[ \lambda \mapsto 
  \Bigl( \underbrace{\partial_z^2 w_z(\lambda) + \frac{1-2\alpha}{z} \partial_z w_z(\lambda) - \lambda^2 w_z(\lambda)}_{= 0} \Bigr)\Bigr](\sqrt{A})y \\
  = \; & 0. 
 \end{align*}
 That the remaining three limits do act in the right way follows similarly as for $v_z$. 
\end{proof}

In the next step we show uniqueness of the above solution under the additional hypothesis that $A$ is a bounded operator. 

\begin{lemma}
  Let $\omega \in [0,\pi)$, $A \in \Sec_\omega(X)$ be bounded, and $x,y \in X$. Then 
  \begin{enumerate}[label=(\alph*)]\itemsep3pt
    \item
  \begin{align*}
      u''(t) + \frac{1-2\alpha}{t}u'(t) & = Au(t) \quad \bigl(t \in (0,\infty) \bigr),\\
      u(0) & = x,\\
      \lim_{t\to 0\rlim} t^{1-2\alpha} u'(t) & = y
  \end{align*}
  has a unique solution $u\in C \bigl( [0, \infty); X \bigr) \cap C^{\infty} \bigl( (0, \infty); X \bigr)$.
    \item
      Let additionally $T>0$ and $f\from (0,T)\to X$ be such that $[s\mapsto s^{1-2\alpha} f(s)] \in L^1 \bigl((0,T);X\bigr)$.
      Then
        \begin{align*}
      u''(t) + \frac{1-2\alpha}{t}u'(t) & = Au(t) + f(t) \quad \bigl(t \in (0,T) \bigr),\\
      u(0) & = x,\\
      \lim_{t\to 0\rlim} t^{1-2\alpha} u'(t) & = y
  \end{align*}
  has a unique solution $u\in C \bigl( [0, T); X \bigr) \cap C^{\infty} \bigl( (0, T); X \bigr)$ given by
  \[u(t) = v_t(\sqrt{A})x + w_t(\sqrt{A})y + \int\limits_{0}^{t} \left( w_t(\sqrt{A})v_s(\sqrt{A}) - v_t(\sqrt{A})w_s(\sqrt{A}) \right) s^{1-2\alpha}f(s) \d s.\]
  \end{enumerate}
  \label{lemma:uniqueness_bounded_case}
\end{lemma}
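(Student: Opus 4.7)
The plan is to treat both parts simultaneously, viewing (a) as the case $f\equiv 0$ of (b). The strategy has three ingredients: reduction to a Volterra integral equation (for uniqueness); invocation of Lemma \ref{lemma:solution_to_modified_problem} for the homogeneous solution; and variation of parameters for the forcing term.

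First I rewrite the ODE as $\frac{\d}{\d t}\bigl(t^{1-2\alpha}u'(t)\bigr) = t^{1-2\alpha}\bigl(Au(t)+f(t)\bigr)$. Integrating twice (using $\lim_{t\to 0\rlim}t^{1-2\alpha}u'(t)=y$ and $u(0)=x$), any solution must satisfy the Volterra equation
\begin{equation*}
u(t) = x + \frac{t^{2\alpha}}{2\alpha}\,y + \frac{1}{2\alpha}\int_0^t s^{1-2\alpha}\bigl(t^{2\alpha}-s^{2\alpha}\bigr)\bigl(Au(s)+f(s)\bigr)\,\d s.
\end{equation*}
For uniqueness, the difference $w$ of two solutions satisfies the homogeneous version ($x=y=0$, $f\equiv 0$). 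Since $A$ is bounded and $\int_0^t s^{1-2\Re\alpha}\abs{t^{2\alpha}-s^{2\alpha}}\,\d s = \mathcal{O}(t^2)$ as $t\to 0\rlim$, setting $\phi(t):=\sup_{s\in[0,t]}\norm{w(s)}$ yields $\phi(t)\leq C_\alpha\norm{A}\,t^2\,\phi(t)$ on $[0,t_0]$ for $t_0$ small, hence $w\equiv 0$ there; on $[t_0,T)$ (resp.\ $[t_0,\infty)$) the equation is non-singular, so standard ODE uniqueness propagates $w\equiv 0$ throughout.

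For existence in (a), I invoke Lemma \ref{lemma:solution_to_modified_problem}: since $A$ is bounded, $v_z(\sqrt A)$ and $w_z(\sqrt A)$ are defined via norm-convergent power series for every $z\in\C$, every $x\in X$ trivially lies in $\mathcal{D}\bigl(\exp^{c\sqrt A}\bigr)$, and the subsector restriction becomes vacuous, so $u(t):=v_t(\sqrt A)x+w_t(\sqrt A)y$ is the desired bounded $C^\infty$ solution. For (b) I add the particular solution
\begin{equation*}
u_p(t) := \int_0^t\! K(t,s)\,s^{1-2\alpha}f(s)\,\d s, \qquad K(t,s):=w_t(\sqrt A)v_s(\sqrt A) - v_t(\sqrt A)w_s(\sqrt A),
\end{equation*}
well-defined as a Bochner integral in $X$ by the $L^1$-hypothesis. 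The central computation is the operator Wronskian $W(t) := t^{1-2\alpha}\bigl(\partial_t v_t(\sqrt A)\,w_t(\sqrt A) - v_t(\sqrt A)\,\partial_t w_t(\sqrt A)\bigr)$: since $v_t(\lambda)$ and $w_t(\lambda)$ both solve the scalar homogeneous equation, substituting into $W'(t)$ gives $W'(t)\equiv 0$, and the expansions $v_t(\lambda)=1+\mathcal{O}(t^2)$, $w_t(\lambda)=t^{2\alpha}/(2\alpha)+\mathcal{O}(t^{2\alpha+2})$ from Lemma \ref{lemma:properties_of_vw} determine $W(t)\equiv -\mathrm{Id}_X$.

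Differentiating $u_p$ under the Bochner integral, the identities $K(t,t)=0$ (by commutativity of the functional calculus) and $\partial_t K(t,t)=t^{2\alpha-1}\,\mathrm{Id}_X$ (a direct consequence of $W\equiv -\mathrm{Id}_X$), together with the fact that for each fixed $s$ the map $t\mapsto K(t,s)$ solves the homogeneous operator equation, yield $u_p''+\frac{1-2\alpha}{t}u_p'=Au_p+f$. The boundary behaviour $u_p(0)=0$ and $\lim_{t\to 0\rlim}t^{1-2\alpha}u_p'(t)=0$ follows from the uniform estimates $\norm{\partial_t w_t(\sqrt A)}=\mathcal{O}(t^{2\Re\alpha-1})$ and $\norm{\partial_t v_t(\sqrt A)}=\mathcal{O}(t)$ near $t=0$ (from the series expansions, valid since $A$ is bounded) combined with the $L^1$-hypothesis on $s^{1-2\alpha}f$. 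The main obstacle I expect is to pin down the Wronskian constant cleanly and to justify the interchange of $\partial_t$ with the Bochner integral defining $u_p$; once these are in place, the rest reduces to routine estimation.
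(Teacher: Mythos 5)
Your proposal is correct and follows essentially the same route as the paper's proof: reduction to a Volterra integral equation for uniqueness, the explicit variation-of-constants formula for the inhomogeneous problem, and verification via the operator Wronskian identity $t^{1-2\alpha}\bigl((\partial_t w_t(\sqrt{A}))v_t(\sqrt{A}) - (\partial_t v_t(\sqrt{A}))w_t(\sqrt{A})\bigr) = \mathop{I}$. The only differences are cosmetic: you pin down the Wronskian constant via Abel's formula together with the small-$t$ expansions from Lemma \ref{lemma:properties_of_vw} rather than the Bessel-function Wronskian identity from the literature, and you run a direct contraction/bootstrap estimate near $t=0$ followed by propagation instead of applying Gr\"onwall's inequality outright.
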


\begin{proof}
 (a)	
 The claim follows essentially from Gr\"{o}nwalls inequality, see e.g.\ \cite[Lemma 2.7]{teschl2011}. 
 If the operator $A$ under consideration is continuous the right hand side of our differential equation is continuous. 
 Integrating the alternative expression of the left hand side mentioned in Remark \ref{remark:better_regularity} twice we obtain
 \[
  u(t) = x + \frac{t^{2\alpha}}{2\alpha}y +\frac{A}{2 \alpha} \int\limits_{0}^t \left( t^{2\alpha} - s^{2\alpha} \right) s^{1-2\alpha} u(s) \d s,
 \]
 which in turn yields the inequality
 \[
  \norm{u(t)} \leq \norm{x} + \frac{t^{2 \Re \alpha}}{2 \abs{\alpha}}\norm{y} + \frac{\norm{A}}{2 \abs{\alpha}} \int\limits_{0}^{t} \abs{t^{2\alpha} - s^{2\alpha}} s^{1 - 2 \Re \alpha} \norm{u(s)} \d s. 
 \]
 By Gr\"{o}nwalls inequality we obtain
 \[
  \norm{u(t)} \leq \left( \norm{x} + \frac{t^{2 \Re \alpha}}{2 \abs{\alpha}}\norm{y} \right) \exp^{\frac{\norm{A}}{2 \abs{\alpha}} \int\limits_{0}^{t} \abs{t^{2\alpha} - s^{2\alpha}} s^{1 - 2 \Re \alpha} \d s}. 
 \]
 Hence $x = y = 0$ implies $u = 0$ which yields uniqueness of the solution $u$. 
 
 (b)
 Uniqueness of a solution to the inhomogeneous problem follows from uniqueness of the corresponding homogeneous problem. %, see Lemma \ref{lemma:uniqueness_bounded_case}. 
 Also, the initial condition for the function itself is not influenced by the additional term. 
 A calculation yields
 \begin{align*}
  u'(t) & = \partial_t v_t(\sqrt{A})x + \partial_t w_t(\sqrt{A})y \\ 
        & + \int\limits_{0}^{t} \left( \partial_t w_t(\sqrt{A})v_s(\sqrt{A}) - \partial_t v_t(\sqrt{A})w_s(\sqrt{A}) \right) s^{1-2\alpha}f(s) \d s.
 \end{align*}
 Multiplying this expression with $t^{1-2\alpha}$ and sending $t \to 0\rlim$ the first line will converge to $y$ as follows from Lemma \ref{lemma:solution_to_modified_problem} 
 while the integral will converge to $0$ by dominated convergence. 
 \par
 It remains to check the ODE. 
 Define
 \[
  u(t) := \int\limits_{0}^{t} \left( w_t(\sqrt{A})v_s(\sqrt{A}) - v_t(\sqrt{A})w_s(\sqrt{A}) \right) s^{1-2\alpha}f(s) \d s.
 \]
 We have to show that $u$ is a solution to the ODE 
 \[
  u''(t) + \frac{1-2\alpha}{t}u'(t) = Au(t) + f(t) \quad \bigl(t \in (0,T) \bigr). 
 \] 
 We already calculated
 \[
  u'(t) = \int\limits_{0}^{t} \left( \partial_t w_t(\sqrt{A})v_s(\sqrt{A}) - \partial_t v_t(\sqrt{A})w_s(\sqrt{A}) \right) s^{1-2\alpha}f(s) \d s. 
 \]
 For the second derivative one finds
 \begin{align*}
  u''(t) & = \int\limits_{0}^{t} \left( \partial_t^2 w_t(\sqrt{A})v_s(\sqrt{A}) - \partial_t^2 v_t(\sqrt{A})w_s(\sqrt{A}) \right) s^{1-2\alpha}f(s) \d s \\
         & +   \left( (\partial_t w_t(\sqrt{A})) v_t(\sqrt{A}) - (\partial_t v_t(\sqrt{A})) w_t(\sqrt{A}) \right) t^{1-2\alpha}f(t). 
 \end{align*}
 Plugging everything into the ODE yields
 \begin{align*}
  & u''(t) + \frac{1-2\alpha}{t} u'(t) \\
  & = Au(t) + \left( (\partial_t w_t(\sqrt{A})) v_t(\sqrt{A}) - (\partial_t v_t(\sqrt{A})) w_t(\sqrt{A}) \right) t^{1-2\alpha}f(t),
 \end{align*}
 where we used that $[t \mapsto v_t(\sqrt{A})]$ and $[t \mapsto w_t(\sqrt{A})]$ both solve the ODE on the operator-valued level. 
 In order to see that the remaining summand is really just $f(t)$ one can use 
 \cite[\S 10.28.1]{nist2019} which says
 \[
  I_{-\alpha}(z) I_{\alpha-1}(z) - I_{-\alpha+1}(z) I_{\alpha}(z) = \frac{2 \sin(\alpha \pi)}{z \pi} 
 \]
 and hence, for any $\lambda \in S_{\omega/2 + \delta}$, $\delta > 0$ sufficiently small,
 \begin{align*}
  & t^{1-2\alpha} \bigl(\partial_t w_t(\lambda) \bigr) v_t(\lambda) - t^{1-2\alpha} \bigl(\partial_t v_t(\lambda)\bigr) w_t(\lambda) \\
  & = \frac{\Gamma(\alpha) \Gamma(1-\alpha)}{2} \Bigl( t^{1-2\alpha} t^{\alpha} I_{-\alpha} (\lambda t)\frac{\d}{\d t}  t^{\alpha} I_{\alpha}(\lambda t)
  - t^{1-2\alpha} t^{\alpha} I_{\alpha} (\lambda t) \frac{\d}{\d t}  t^{\alpha} I_{-\alpha}(\lambda t) \Bigr) \\
  & = \frac{\pi t^{1-\alpha}}{2 \sin(\alpha \pi)} \Bigl( I_{-\alpha}(\lambda t) 
  \underbrace{\frac{\d}{\d t}  t^{\alpha} I_{\alpha}(\lambda t)}_{\lambda t^{\alpha} I_{\alpha-1}(\lambda t)}
  - I_{\alpha}(\lambda t) \underbrace{ \frac{\d}{\d t}  t^{\alpha} I_{-\alpha}(\lambda t)}_{\lambda t^{\alpha} I_{-\alpha+1}(\lambda t)} \Bigr) \\
  & = 1,
 \end{align*}
 which transfers on the operator level as
 \[
  t^{1-2\alpha} \bigl(\partial_t w_t(\sqrt{A}) \bigr) v_t(\sqrt{A}) - t^{1-2\alpha} \bigl(\partial_t v_t(\sqrt{A})\bigr) w_t(\sqrt{A}) = \mathop{I}.
 \]
 This finishes the proof.
\end{proof}

% Via approximation this result carries over to the unbounded case. 
% But before, as preparation, we need the fact that since we can solve the homogeneous problem in the bounded case uniquely so we can do with the inhomogeneous problem.

% \begin{lemma}
%  Assume we are still in the situation of Lemma \ref{lemma:uniqueness_bounded_case} and let $f: (0, \infty ) \to X$ be such that $s^{1-2\alpha} f \in L^1_{\text{loc}} \bigl( [0, c);X \bigr)$. 
%  Then the unique solution to the problem of finding $u \in C \bigl( [0, c); X \bigr) \cap C^1 \bigl( (0, c); X \bigr)$ with initial data $u(0)=x$, 
%  and $\lim_{t \to 0\rlim} t^{1-2\alpha}u'(t)=y$
%  \[
%   u''(t) + \frac{1-2\alpha}{t}u'(t) = Au(t) + f(t) \quad (t \in (0,c) ). 
%  \] 
%  is given by
%  \[
%   u(t) = v_t(\sqrt{A})x + w_t(\sqrt{A})y + \int\limits_{0}^{t} \left( w_t(\sqrt{A})v_s(\sqrt{A}) - v_t(\sqrt{A})w_s(\sqrt{A}) \right) s^{1-2\alpha}f(s) \d s. 
%  \]
% \end{lemma}  
% 
% \begin{proof}
%  Uniqueness of a solution to the inhomogeneous problem follows from uniqueness of the corresponding homogeneous problem, see Lemma \ref{lemma:uniqueness_bounded_case}. 
%  Also the initial condition for the function itself is not influenced by the additional term. 
%  A short calculation reveals that this also holds true for the initial condition concerning the scaled derivative. 
%  In the same manner one checks the ODE. 
% \end{proof}

\begin{remark} \leavevmode
\begin{enumerate}
 \item 
  The formula for the solution of the inhomogeneous problem is indeed just the variation of constants formula applied to the equivalent first-order system for the 
  ordinary differential equation, introducing the new variable $w$ given by $w(t):= t^{1-2\alpha}u'(t)$. 
 
 \item
  For a bounded operator $A$, the sectoriality is actually not needed. 
  This is due to the fact that the functions $\lambda \mapsto v_t(\sqrt{\lambda})$ and $\lambda \mapsto w_t(\sqrt{\lambda})$ are actually entire, cf. the beginning of the 
  proof of Lemma \ref{lemma:properties_of_vw} where one can find the power series representations. 
  One can simply plug $A$ into these representations. 
  
\end{enumerate}
\end{remark}

We now extend the uniqueness result to unbounded operators.

\begin{lemma}
 Let $\omega \in [0, \pi)$,  $A \in \Sec _{\omega}(X)$. 
  Let $c>0$ and $x,y \in \mathcal{D}(\exp^{cA})$. 
  Then for $\delta \in (0, (\pi-\omega)/2)$ and $0 < d < c / (1 + \tan((\pi - \omega)/2 - \delta) \tan(\omega/2 + \delta))$ the problem
  \begin{align*}
      u''(t) + \frac{1-2\alpha}{t}u'(t) & = Au(t) \quad (t \in (0,d)),\\
      u(0) & = x,\\
      \lim_{t\to 0\rlim} t^{1-2\alpha} u'(t) & = y
  \end{align*}
  has a unique solution $u\in C \bigl( [0, d); \mathcal{D}(A^{\infty}) \bigr) \cap C^{\infty} \bigl( (0, d); \mathcal{D}(A^{\infty}) \bigr)$ which extends to a holomorphic
  function $u\in \Hol(S_{(\pi-\omega)/2 - \delta} \cap \C_{0< \Re < d}; \mathcal{D}(A^{\infty}))$ such that
  \begin{align*}
    u''(z) + \frac{1-2\alpha}{z}u'(z) & = Au(z) \quad (z \in S_{(\pi-\omega)/2 - \delta} \cap \C_{0< \Re < d}),\\
    \lim\limits_{z \to 0} u(z) & = x,\\
    \lim_{z\to 0\rlim} z^{1-2\alpha} u'(z) & = y. 
  \end{align*} 
 \label{lemma:uniqueness_modified_problem}
\end{lemma}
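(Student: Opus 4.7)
The plan is to derive existence (and the holomorphic extension) directly from the constructive Lemma~\ref{lemma:solution_to_modified_problem}, and to obtain uniqueness by reducing to the bounded-operator formula of Lemma~\ref{lemma:uniqueness_bounded_case}(b) through a Yosida approximation. For existence, I would simply take $u(z) := v_z(\sqrt{A})x + w_z(\sqrt{A})y$, which by Lemma~\ref{lemma:solution_to_modified_problem} is already holomorphic on $S_{(\pi-\omega)/2 - \delta}\cap\C_{0<\Re<d}$ with values in $\mathcal{D}(A^\infty)$, solves the ODE, and achieves both required limits at $z=0$; restricting to $(0,d)$ gives the desired real-variable solution.

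For uniqueness, let $v := u_1 - u_2$ be the difference of two candidate solutions; then $v$ lies in the stated class, solves the homogeneous ODE, and has $v(0)=0$ together with $\lim_{t\to 0^+} t^{1-2\alpha} v'(t) = 0$. The plan is to introduce the Yosida approximation $A_n := nA(n+A)^{-1}\in L(X)$, which is bounded, sectorial of angle at most $\omega$, commutes with $A$, and satisfies $A - A_n = A^2(n+A)^{-1}$ (so that $\|(A-A_n)w\| \leq (M/n)\|A^2 w\|$ on $\mathcal{D}(A^2)$). Rewriting the ODE as
\[
 v''(t) + \frac{1-2\alpha}{t}v'(t) = A_n v(t) + (A-A_n)v(t)
\]
with zero initial data and integrable forcing $f_n(t) := (A-A_n)v(t) = (n+A)^{-1}A^2 v(t)$ (integrability against the weight $s^{1-2\alpha}$ follows from continuity of $A^2 v$), one applies Lemma~\ref{lemma:uniqueness_bounded_case}(b) to the bounded operator $A_n$ to obtain the variation-of-constants representation
\[
 v(t) = \int_0^t \bigl[w_t(\sqrt{A_n})v_s(\sqrt{A_n}) - v_t(\sqrt{A_n})w_s(\sqrt{A_n})\bigr] s^{1-2\alpha}(A-A_n)v(s)\,\d s,
\]
and then lets $n \to \infty$. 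The forcing decays uniformly at rate $O(1/n)$ on compact subintervals of $[0,d)$ because $A^2 v$ is continuous, so if the kernel, after absorbing the regularizer $(n+A)^{-1}$ by commutativity of the functional calculus, can be controlled, a dominated-convergence argument forces $v(t) = 0$.

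The main obstacle will be precisely the uniform-in-$n$ control of the Bessel-Wronskian kernel $K_n(t,s) := w_t(\sqrt{A_n})v_s(\sqrt{A_n}) - v_t(\sqrt{A_n})w_s(\sqrt{A_n})$. The symbols $v_t(\mu)$ and $w_t(\mu)$ grow exponentially as $\mu \to \infty$ and the spectrum of $\sqrt{A_n}$ is unbounded as $n \to \infty$, so each factor individually blows up; the needed cancellation must come from the antisymmetric identity
\[
 I_\alpha(z)I_{-\alpha}(w) - I_{-\alpha}(z)I_\alpha(w) = \frac{2\sin(\alpha\pi)}{\pi}\bigl(I_\alpha(z)K_\alpha(w) - K_\alpha(z)I_\alpha(w)\bigr),
\]
together with the regularizer $(n+A)^{-1}$ built into the forcing. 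Making this estimate rigorous within the sectorial functional calculus and justifying the limit passage is the technical heart of the argument.
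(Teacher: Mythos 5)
Your overall scaffolding matches the paper's: the sectorial approximation $A_\varepsilon = A(1+\varepsilon A)^{-1}$ used there is exactly the Yosida approximation $A_n$ with $\varepsilon = 1/n$, the reduction to the inhomogeneous bounded-operator problem together with the variation-of-constants formula from Lemma~\ref{lemma:uniqueness_bounded_case}(b) is also what the paper does, and considering $v = u_1 - u_2$ with zero data is a harmless reformulation of the paper's argument, which instead derives the representation of $u$ directly. You have also correctly located the only real difficulty --- uniform control of the kernel $K_n(t,s)$ as $n\to\infty$ --- but the mechanism you propose for obtaining that control does not work.

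In the antisymmetric combination $I_\alpha(\lambda t)I_{-\alpha}(\lambda s) - I_{-\alpha}(\lambda t)I_\alpha(\lambda s)$ only the $\exp^{\pm\lambda(t+s)}$ parts of the large-argument expansion cancel (the asymptotic coefficients of $I_\nu$ depend only on $\nu^2$); the cross terms $\propto \exp^{\pm\lambda(t-s)}/\lambda$ survive. For $s<t$ the symbol therefore still grows like $\exp^{\lambda(t-s)}$, and the identity you quote merely rewrites the same object through $K_\alpha$ without producing any extra decay. The regularizer $(n+A)^{-1}$ hidden in the forcing decays only polynomially in $\lambda$, so $\lambda\mapsto\bigl(w_t(\sqrt\lambda)v_s(\sqrt\lambda)-v_t(\sqrt\lambda)w_s(\sqrt\lambda)\bigr)(n+\lambda)^{-1}$ is not in $\mathcal{E}^{\text{ext}}[S_\omega]$, and you obtain no uniform-in-$n$ operator bound on $K_n(t,s)(n+A)^{-1}$. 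Your dominated-convergence step has no dominating function, so the argument breaks exactly at the point you flagged as the ``technical heart''.

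What the paper does instead is to apply the bounded operator $\exp^{-2c\sqrt{A_\varepsilon}}$ to the variation-of-constants identity \emph{before} letting $\varepsilon\to 0$. Since $0\le s,t<d<c$, the weight $\exp^{-2c\sqrt\lambda}$ dominates the $\exp^{|t-s|\sqrt\lambda}$ growth of the kernel; by Lemma~\ref{lemma:properties_of_vw}\ref{lemma:properties_of_vw:item:4}, $\lambda\mapsto\exp^{-c\sqrt\lambda}v_t(\sqrt\lambda)$ and $\lambda\mapsto\exp^{-c\sqrt\lambda}w_t(\sqrt\lambda)$ lie in $\mathcal{E}^{\text{ext}}[S_\omega]$, so the regularized kernel is in the algebra for which $f(A_\varepsilon)x\to f(A)x$ holds, and dominated convergence becomes legitimate. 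This yields $\exp^{-2c\sqrt{A}}u(t) = \exp^{-2c\sqrt{A}}\bigl(v_t(\sqrt{A})x + w_t(\sqrt{A})y\bigr)$, which is upgraded to the statement of the lemma by injectivity of $\exp^{-2c\sqrt{A}}$ --- this also explains why the hypothesis that $x,y$ lie in the domain of an exponential of $A$ cannot be dropped. In short: right scaffolding, but you need the exponential regularizer and the injectivity argument, not the Wronskian cancellation, to close the limit passage.
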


\begin{proof}
 We work with sectorial approximations, see e.g.\ \cite[Chapter 2.1.2]{haase2006}, and apply the results for bounded operators. 
 For $\varepsilon \in (0, 1]$ consider $A_{\varepsilon} := A(1+\varepsilon A)^{-1}$. 
 Then the family $(A_\varepsilon)_{\varepsilon\in (0,1]}$ in $L(X)$ is a sectorial approximation to $A$. 
 Indeed, the equality  
 \[
  \lambda \bigl( \lambda + A(1+\varepsilon A )^{-1} \bigr)^{-1} = 
  \frac{\lambda}{1 + \lambda \varepsilon} \left( \frac{\lambda}{1+\lambda \varepsilon} + A \right)^{-1} + 
  \frac{\lambda \varepsilon}{1 + \lambda \varepsilon} A \left( \frac{\lambda}{1+\lambda \varepsilon} + A \right)^{-1}
 \]
 implies
 \[
  \sup\limits_{\varepsilon \in (0,1]} \sup\limits_{\lambda > 0} \norm{\lambda \bigl( \lambda + A(1+\varepsilon A )^{-1} \bigr)^{-1}} < \infty
 \]
 and
 \[
  \forall x \in X: \, \lim\limits_{\varepsilon \to 0+} \lambda \bigl( \lambda + A(1+\varepsilon A )^{-1} \bigr)^{-1}x = \lambda (\lambda + A)^{-1}x.  
 \]
 Also, define $u_{\varepsilon}(t) := (1+\varepsilon A)^{-1} u(t)$ and $x_{\varepsilon}:=(1+\varepsilon A)^{-1}x$, $y_{\varepsilon}:=(1+\varepsilon A)^{-1}y$. 
 Then by Lemma \ref{lemma:uniqueness_bounded_case}, the problem
 \begin{align*}
  u_{\varepsilon}''(t) + \frac{1-2\alpha}{t} u_{\varepsilon}'(t) & = A_{\varepsilon} u_{\varepsilon}(t) + \underbrace{A u_{\varepsilon}(t) - 
  A_{\varepsilon} u_{\varepsilon}(t)}_{=:f_{\varepsilon}(t)} 
  \quad \bigl(t\in (0,d) \bigr),\\
  u_\varepsilon (0) & = x_\varepsilon,\\
   \lim_{t\to 0\rlim} t^{1-2\alpha} u_{\varepsilon}'(t) & = y_{\varepsilon},
 \end{align*}
 has the unique solution, implicitly
 given by
 \[
  u_{\varepsilon}(t) = v_{t}(\sqrt{A_{\varepsilon}})x_{\varepsilon} + w_{t}(\sqrt{A_{\varepsilon}})y_{\varepsilon} + 
  \int\limits_{0}^{t} U_{A_{\varepsilon}}(t,s) f_{\varepsilon}(s) s^{1-2\alpha} \d s
 \]
 where 
 \[
  U_{A_{\varepsilon}}(t,s) = w_t(\sqrt{A_{\varepsilon}})v_s(\sqrt{A_{\varepsilon}}) - v_t(\sqrt{A_{\varepsilon}})w_s(\sqrt{A_{\varepsilon}}). 
 \] 
 We now let $\varepsilon \to 0 \rlim$. Then $u_{\varepsilon}(t) \to u(t)$ for $t\in (0,d)$, and $x_{\varepsilon}\to x$ and $y_{\varepsilon}\to y$, 
 see \cite[Proposition 2.1.1 c)]{haase2006}. 
 Moreover,
 \[
  f_{\varepsilon}(s) = \bigl( (1+\varepsilon A)^{-1} - (1+\varepsilon A)^{-2} \bigr) Au(s) \to 0. 
 \]
 If we apply the bounded operator $\exp^{- 2 c\sqrt{A_{\varepsilon}}}$ to the above equality a statement analogously to the similar situation for halfplane operators 
 \cite[Lemma 8.6]{isem21} (also pay attention to the comment on p.\ 159) is applicable, i.e., one has
 \[
  \forall f \in \mathcal{E}^{\text{ext}}[S_{\omega}], x \in X: \, \lim\limits_{\varepsilon \to 0\rlim} f(A_{\varepsilon})x = f(A)x. 
 \] 
 So, sending $\varepsilon  \to 0\rlim$ we get
 \begin{align*}
  \exp^{-2c\sqrt{A}} u(t) & = \exp^{-2c \sqrt{A}} \bigl( v_{t}(\sqrt{A})x + w_{t}(\sqrt{A})y \bigr) \\
  & + \lim\limits_{\varepsilon \to 0 \rlim} \int\limits_{0}^{t} \exp^{-2c \sqrt{A_{\varepsilon}}}U_{A_{\varepsilon}}(t,s) f_{\varepsilon}(s) s^{1-2\alpha} \d s.
 \end{align*}
 In order to apply the dominated convergence theorem and see that the remaining limit actually vanishes (which is the desired result) we need to study the integrand. 
 We have
 \begin{align*}
  & \exp^{- 2 c\sqrt{A_{\varepsilon}}} U_{A_{\varepsilon}}(t,s) x \\
  & = 
  \bigl[\lambda\mapsto \exp^{- c\sqrt{\lambda}} w_t(\sqrt{\lambda}) \bigr](A_{\varepsilon}) \bigl[\lambda\mapsto \exp^{- c\sqrt{\lambda}} v_s(\sqrt{\lambda}) \bigr](A_{\varepsilon})x \\
  & -
  \bigl[\lambda\mapsto \exp^{- c\sqrt{\lambda}} w_s(\sqrt{\lambda}) \bigr](A_{\varepsilon}) \bigl[\lambda\mapsto \exp^{- c\sqrt{\lambda}} v_t(\sqrt{\lambda}) \bigr](A_{\varepsilon})x \\
  & \to \bigl[\lambda\mapsto \exp^{- 2 c\sqrt{\lambda}} U_{\lambda}(t,s) \bigr](A) x,
 \end{align*}
 where $U_{\lambda}(t,s) = w_t(\sqrt{\lambda})v_s(\sqrt{\lambda}) - v_t(\sqrt{\lambda})w_s(\sqrt{\lambda})$.
 By injectivity of $\exp^{-2c\sqrt{A}}$ we finally conclude
 \[
  u(t) = v_{t}(\sqrt{A})x + w_{t}(\sqrt{A})y.  
 \]
 The remaining statements about the analytic continuation follows from Lemma \ref{lemma:solution_to_modified_problem}. 
\end{proof}

For $\omega \in [0, \pi)$,  $A \in \Sec _{\omega}(X)$ and $d>0$ we define
\[
 \mathcal{V}_d := \bigcap\limits_{0 < t < d} \mathcal{D}(v_t(\sqrt{A}))\quad \text{and} 
   \quad \mathcal{W}_d := \bigcap\limits_{0 < t < d} \mathcal{D}(w_t(\sqrt{A})).
\]
Furthermore, set $v_0 := 1$ and $w_0 := 0$ and hence one has $v_0(\sqrt{A}) = \mathop{I}$ as well as $w_0(\sqrt{A}) = 0$. 
   
\begin{corollary}
 Let $\omega \in [0, \pi)$,  $A \in \Sec _{\omega}(X)$, $d>0$, $x,y \in X$ and $u$ a solution to 
 \begin{align*}
      u''(t) + \frac{1-2\alpha}{t}u'(t) & = Au(t) \quad (t \in (0,d)),\\
      u(0) & = x,\\
      \lim_{t\to 0\rlim} t^{1-2\alpha} u'(t) & = y. 
  \end{align*}
  Then
  \begin{enumerate}[label=(\alph*)]\itemsep3pt
    \item If $x\in \mathcal{V}_d$ then $y\in \mathcal{W}_d$.
    \item If $y\in \mathcal{W}_d$ then $x\in \mathcal{V}_d$.
    \item If $x\in \mathcal{V}_d$ and $y\in\mathcal{W}_d$ then $u(t) =  v_t(\sqrt{A})x + w_t(\sqrt{A})y$ for $t\in [0,d)$.
  \end{enumerate}
  \label{coro:form_of_solution}
\end{corollary}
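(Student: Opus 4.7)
The plan is to leverage Lemma \ref{lemma:uniqueness_modified_problem} by regularizing the given solution $u$ so that its initial data fall within the scope of that lemma, and then use the anchor/regularizer machinery of the functional calculus to unpack the resulting identity and obtain the claimed domain inclusions and formula.

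More precisely, I would fix $t_0 \in (0,d)$, choose $\delta \in (0,(\pi-\omega)/2)$, and then pick $c>0$ large enough that $t_0 < c/\bigl(1+\tan((\pi-\omega)/2-\delta)\tan(\omega/2+\delta)\bigr)$. Set $\tilde{x} := \exp^{-2c\sqrt{A}}x$ and $\tilde{y} := \exp^{-2c\sqrt{A}}y$; both lie in $\mathcal{D}(\exp^{2c\sqrt{A}}) \subseteq \mathcal{D}(\exp^{c\sqrt{A}})$. Since $\exp^{-2c\sqrt{A}}$ commutes with $A$ in the strong sense and maps $X$ into $\mathcal{D}(A^\infty)$, the function $U(t):=\exp^{-2c\sqrt{A}}u(t)$ is a $\mathcal{D}(A^\infty)$-valued solution of the same ODE (upgrading the distributional solution via Remark \ref{remark:better_regularity}) with initial data $\tilde x,\tilde y$. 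Applying Lemma \ref{lemma:uniqueness_modified_problem} gives
\[
\exp^{-2c\sqrt{A}} u(t) = v_t(\sqrt{A})\tilde x + w_t(\sqrt{A})\tilde y \qquad (t\in [0,t_0)).
\]

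Now suppose $x\in\mathcal{V}_d$; the case $y\in\mathcal{W}_d$ is symmetric. Since $x\in\mathcal{D}(v_t(\sqrt{A}))$ for all $t<d$ and $\exp^{-2c\sqrt{\cdot}}$ regularizes $v_t$, the general functional-calculus identity $v_t(\sqrt{A})\exp^{-2c\sqrt{A}}x = \exp^{-2c\sqrt{A}}v_t(\sqrt{A})x$ applies. Rearranging the displayed formula,
\[
(\exp^{-2c\sqrt{\cdot}}\,w_t)(\sqrt{A})y = w_t(\sqrt{A})\tilde y = \exp^{-2c\sqrt{A}}\bigl(u(t)-v_t(\sqrt{A})x\bigr).
\]
Factoring $\exp^{-2c\sqrt{\cdot}} = \exp^{-c\sqrt{\cdot}}\cdot\exp^{-c\sqrt{\cdot}}$ and cancelling one injective factor of $\exp^{-c\sqrt{A}}$ yields
\[
(\exp^{-c\sqrt{\cdot}}\,w_t)(\sqrt{A})y = \exp^{-c\sqrt{A}}\bigl(u(t)-v_t(\sqrt{A})x\bigr),
\]
whose right-hand side lies in the range of $\exp^{-c\sqrt{A}} = \mathcal{D}\bigl((\exp^{-c\sqrt{A}})^{-1}\bigr)$. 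Since $\exp^{-c\sqrt{\cdot}}$ is an injective regularizer for $w_t$ (by Lemma \ref{lemma:properties_of_vw}), this precisely expresses that $y\in\mathcal{D}(w_t(\sqrt{A}))$ with $w_t(\sqrt{A})y = u(t)-v_t(\sqrt{A})x$. Since $t_0\in(0,d)$ was arbitrary, this proves (a) together with the formula in (c); part (b) is proved by interchanging the roles of $x,y$ and $v,w$.

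The main obstacle is the bookkeeping needed to interchange the unbounded operators $v_t(\sqrt{A})$, $w_t(\sqrt{A})$ with the regularizer $\exp^{-2c\sqrt{A}}$ and then to recognise the resulting identity as expressing membership in the correct domain; this is entirely controlled by Lemma \ref{lemma:properties_of_vw} (which supplies $\exp^{-c\sqrt{\cdot}}$ as an anchor element) together with the nested-domain structure of Lemma \ref{lemma:nested_domains}. A secondary point is verifying smoothness of $U$ into $\mathcal{D}(A^\infty)$ on $(0,t_0)$, which follows from the regularity of $u$ in Remark \ref{remark:better_regularity} combined with the boundedness of $A^k\exp^{-2c\sqrt{A}}$ for every $k\in\N_0$.
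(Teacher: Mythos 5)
Your proposal is correct and follows essentially the same route as the paper: regularise by an injective semigroup operator $\exp^{-c\sqrt{A}}$ (the paper) or $\exp^{-2c\sqrt{A}}$ (you), invoke Lemma~\ref{lemma:uniqueness_modified_problem} to obtain the representation for the smoothed solution, commute $v_t(\sqrt{A})$ past the regulariser using $x\in\mathcal{V}_d$, and then read off $y\in\mathcal{D}(w_t(\sqrt{A}))$ together with the formula from the resulting identity via the injective-regulariser definition of $w_t(\sqrt{A})$. The extra cancellation step you insert (passing from $\exp^{-2c\sqrt{\cdot}}w_t$ to $\exp^{-c\sqrt{\cdot}}w_t$) is valid but unnecessary, since one regularisation already suffices, which is exactly what the paper does.
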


\begin{proof}
 For $c > 0$ sufficiently large we define
 \[
  \tilde{u}(t) := \exp^{-c\sqrt{A}}u(t), \quad \tilde{x} := \exp^{-c\sqrt{A}}x, \quad \tilde{y}:= \exp^{-c\sqrt{A}}y.
 \]
 Then
 \[
  \tilde{u}(t) = v_t(\sqrt{A})\tilde{x} + w_t(\sqrt{A})\tilde{y}
 \]
 by Lemma \ref{lemma:uniqueness_modified_problem}. 
 
 Let us now prove (a), the proof of (b) is analogous. Let $0\leq t<d$. Since $x\in\mathcal{V}_d$ we have $v_t(\sqrt{A})\exp^{-c\sqrt{A}} x = \exp^{-c\sqrt{A}}v_t(\sqrt{A})x$.
 Hence, 
 \[w_t(\sqrt{A})\tilde{y} = \tilde{u}(t) - v_t(\sqrt{A}) \tilde{x} = \exp^{-c\sqrt{A}}\bigl(u(t)-v_t(\sqrt{A})x\bigr) \in \mathcal{D}(\exp^{c\sqrt{A}}),\]
 and therefore
 \[u(t)-v_t(\sqrt{A})x = \exp^{c\sqrt{A}} w_t(\sqrt{A}) \exp^{-c\sqrt{A}}y = \bigl[\lambda\mapsto \exp^{c \lambda} w_t(\lambda) \exp^{-c\lambda}\bigr](\sqrt{A}) y.\]
 In particular, $y\in \mathcal{D}(w_t(\sqrt{A}))$. We conclude $y\in \mathcal{W}_d$.
 
 To prove (c) we just note that
 \[u(t)-v_t(\sqrt{A})x = \bigl[\lambda\mapsto \exp^{c \lambda} w_t(\lambda) \exp^{-c\lambda}\bigr](\sqrt{A}) y = w_t(\sqrt{A})y\]
 for all $0\leq t<d$. 
\end{proof}

% \begin{remark}
%  The above proof shows that the (right-open) existence interval is precisely determined by the regularity of the initial data. 
% \end{remark}

With the above results we can proceed to the uniqueness statement. 
We will need a small refinement of \cite[Lemma 6.3.1]{martinez2001}. 

\begin{lemma}
 Let $A \in \Sec_{\omega}(X)$ and $y \in \bigcap_{t \geq 0} \mathcal{D}(\exp^{t\sqrt{A}})$. 
 If there is $C>0$, $\beta > 0$ such that 
 \[
  \norm{\exp^{t\sqrt{A}}y} \leq C (1 + t^{\beta})
 \]
 it follows that $y \in N(A)$. 

 \label{lemma:laplace_implies_nullspace}
\end{lemma}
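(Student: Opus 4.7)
The plan is to extend $g(z):=\exp^{z\sqrt{A}}y$ to an entire $X$-valued function on $\C$, use the polynomial growth hypothesis to show via a Liouville-type argument that $g$ is a polynomial in $z$, and then exploit the boundedness of $g$ on a sector in the left halfplane to force $g$ to be constant; differentiating at $t=0\rlim$ will then give $\sqrt{A}y=0$.

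First, set $\psi:=(\pi-\omega)/2>0$ and recall that $(\exp^{-\zeta\sqrt{A}})_{\zeta\in S_\psi\cup\{0\}}$ is the bounded holomorphic semigroup generated by $-\sqrt{A}$; fix $\psi'\in (0,\psi)$ and let $M_{\psi'}$ be its uniform bound on $\overline{S_{\psi'}}$. For $z\in\C$ choose $T=T(z)\in [0,\infty)$ so large that $T-z\in\overline{S_{\psi'}}$ (concretely, $T=\max(0,\Re z)+|\Im z|\cot(\psi')+1$ works), and define
\[
g(z):=\exp^{-(T-z)\sqrt{A}}\exp^{T\sqrt{A}}y.
\]
By the very definition of $\exp^{T\sqrt{A}}y$ as the (unique) preimage of $y$ under $\exp^{-T\sqrt{A}}$, the identity $\exp^{-(T_2-T_1)\sqrt{A}}\exp^{T_2\sqrt{A}}y=\exp^{T_1\sqrt{A}}y$ for $T_2\geq T_1\geq 0$ shows independence of $T$, and holomorphy of $\zeta\mapsto\exp^{-\zeta\sqrt{A}}$ on $S_\psi$ yields $g\in\Hol(\C;X)$ with $g(t)=\exp^{t\sqrt{A}}y$ for all $t\geq 0$.

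Second, the construction together with the polynomial growth hypothesis gives
\[
\|g(z)\|\leq M_{\psi'}\,\|\exp^{T\sqrt{A}}y\|\leq M_{\psi'}C(1+T^\beta)\leq C'(1+|z|)^\beta
\]
for a suitable $C'>0$ and every $z\in\C$, so $g$ is entire with polynomial growth of order $\leq\beta$. Applying the scalar Liouville theorem to every $x^*\circ g$, $x^*\in X^*$, and invoking Hahn--Banach, $g$ is an $X$-valued polynomial of degree $\leq\lfloor\beta\rfloor$. On the other hand, whenever $-z\in\overline{S_{\psi'}}$ the formula collapses to $g(z)=\exp^{-(-z)\sqrt{A}}y$, so $\|g(z)\|\leq M_{\psi'}\|y\|$. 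Since a non-constant polynomial is unbounded on every unbounded sector, $g$ must reduce to its constant term $g(0)=y$.

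In particular, $\exp^{-t\sqrt{A}}y=g(-t)=y$ for all $t\geq 0$. As the analytic semigroup maps $X$ into $\mathcal{D}\bigl((\sqrt{A})^\infty\bigr)$, $y$ is automatically in $\mathcal{D}(\sqrt{A})\cap \mathcal{D}(A)$, and differentiating at $t=0\rlim$ gives $-\sqrt{A}y=0$. Hence $y\in N(\sqrt{A})=N(A)$, as claimed.

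The main obstacle I expect lies in step one: verifying that different admissible choices of $T$ produce the same value of $g(z)$, so that $g$ is well-defined and depends holomorphically on $z$ in full neighbourhoods. This requires carefully combining the semigroup relation applied on the range side of $\exp^{-T\sqrt{A}}$ with analyticity of $\zeta\mapsto\exp^{-\zeta\sqrt{A}}$ on $S_\psi$, and tracking $T=T(z)$ so that the uniform estimate $\|\exp^{-(T-z)\sqrt{A}}\|\leq M_{\psi'}$ survives in a neighbourhood of each $z\in\C$.
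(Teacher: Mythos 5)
Your proof is correct, and it takes a genuinely different route from the paper's. The paper follows \cite[Lemma 6.3.1]{martinez2001} and works in the \emph{Laplace-transform picture}: it lets $f$ be the Laplace transform of $t\mapsto\exp^{t\sqrt{A}}y$, shows $(\lambda-\sqrt A)f(\lambda)=y$ with the resolvent-type bound $\norm{\lambda f(\lambda)}\leq \frac{C}{\cos\varepsilon}\bigl(1+\Gamma(\beta+1)(\Re\lambda)^{-\beta}\bigr)$, glues $\lambda\mapsto\lambda f(\lambda)$ with $\lambda\mapsto\lambda(\lambda-\sqrt A)^{-1}y$ across the overlap to obtain a function meromorphic on $\C\setminus\{0\}$, removes the singularity at $0$ by an inductive argument, and then applies Liouville. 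You instead work in the \emph{semigroup picture}: you extend the orbit $t\mapsto\exp^{t\sqrt A}y$ directly to an entire $X$-valued function $g$ by writing $g(z)=\exp^{-(T-z)\sqrt A}\exp^{T\sqrt A}y$, read off polynomial growth from the hypothesis, and conclude via (vector-valued) Liouville plus boundedness of $g$ on a left sector that $g$ is constant; differentiating $\exp^{-t\sqrt A}y\equiv y$ at $t=0\rlim$ then gives $\sqrt A y=0$. Both proofs are Liouville arguments at heart, but yours avoids the Laplace transform and the iterative removal-of-singularity step, at the cost of the careful verification (which you rightly flag and carry out correctly) that $g$ is well defined and holomorphic; the paper's route keeps the argument close to the cited source and to resolvent technology. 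Your well-definedness step is sound: the identity $\exp^{-(T_2-T_1)\sqrt A}\exp^{T_2\sqrt A}y=\exp^{T_1\sqrt A}y$ follows from injectivity of $\exp^{-T_1\sqrt A}$, holomorphy is local in $z$ with $T$ frozen, the bound $\norm{g(z)}\leq C'(1+\abs z)^\beta$ follows from your choice of $T(z)$, and the final differentiation is justified because $y\in R(\exp^{-\sqrt A})\subseteq \mathcal D(A^\infty)$.
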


\begin{proof}
 The following proof is essentially an adaptation of the proof of \cite[Lemma 6.3.1]{martinez2001}. 
 One has $\sigma(\sqrt{A}) \subset \overline{S_{\omega / 2}}$. 
 Let us denote the Laplace transform of $t \mapsto \exp^{t \sqrt{A}}y$ on $S_{\pi/2}$ by $f$.
 Calculations show that
 \begin{equation}
  (\lambda - \sqrt{A}) f(\lambda) = y
  \label{eq:overlap}
 \end{equation}
 on $S_{\pi/2}$ and 
 \begin{equation}
  \norm{\lambda f(\lambda)} \leq \frac{C}{\cos (\varepsilon)} \cdot \left( 1 + \frac{\Gamma(\beta + 1)}{\left( \Re \lambda \right)^{\beta}} \right) 
  \label{eq:bound}
 \end{equation}
 on $S_{\varepsilon}$ for every $\varepsilon \in [0, \pi / 2 )$. 
 As a result
 \[
  g(\lambda) := \begin{cases} \lambda (\lambda - \sqrt{A})^{-1}y & \text{, if}\quad \lambda \in \C \setminus \overline{S_{\frac{\omega}{2}}}, \\ 
                              \lambda f(\lambda) & \text{, if}\quad \lambda \in \C_{\Re > 0},
                \end{cases}
 \]
 consistently defines a meromorphic function on $\C \setminus \{0\}$. Indeed, by \eqref{eq:overlap}, the functions agree on the overlapping regions. 
 Choose $n \in \N$, $n \geq \beta$. 
 Then $\lambda \mapsto \lambda^n g(\lambda)$ admits a holomorphic extension to $\lambda = 0$ by Riemann's theorem on removable singularities.  
 Furthermore this extension takes the value $0$ at $\lambda = 0$ (approach $\lambda = 0$ from the part where $g$ is bounded) which is why 
 $\lambda \mapsto \lambda^{n-1}g(\lambda)$ is still entire. 
 Arguing inductively down to $n = 0$, we find that $g$ admits an extension to the whole complex plane again denoted by $g$ which is entire and still bounded 
 (approach again $\lambda = 0$ from the part where $g$ is bounded) and so $g$ is constant.
 We have
 \[
  \lim\limits_{\stackrel{\abs{\lambda} \to \infty}{\lambda \in (-\infty, 0)}} g(\lambda) = y
 \]
 which means $g = y$, i.e., $y \in N(\sqrt{A}) = N(A)$ (all fractional powers share the same null space \cite[Prop. 3.1.1]{haase2006}). 
\end{proof}

\begin{thm}
 Let $\omega\in [0,\pi)$ and $A\in \Sec_\omega(X)$.
 Let $u$ be a solution of \eqref{ODE} with $u(0) = 0$. 
 Then $u = 0$. In particular, $\eqref{ODE}$ has a unique solution.
 \label{thm:uniqueness_of_cesp_solution}
\end{thm}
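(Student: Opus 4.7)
The plan is to reduce, via regularization by the semigroup $\exp^{-c\sqrt{A}}$, to the modified problem of Corollary \ref{coro:form_of_solution}(c) and then use the asymptotic $I_\alpha(z)\sim\exp^{z}/\sqrt{2\pi z}$ together with Lemma \ref{lemma:laplace_implies_nullspace} to conclude. Concretely, fix $c>0$ and set $\tilde u(t):=\exp^{-c\sqrt{A}}u(t)$. Since $\exp^{-c\sqrt{A}}$ commutes with $A$, $\tilde u$ is still a bounded solution of \eqref{ODE} with $\tilde u(0)=0$; by Lemma \ref{lemma:exp_bound} the operator $A\exp^{-c\sqrt{A}}$ is bounded, so $A\tilde u$ is bounded on $(0,\infty)$. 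Rewriting the equation as $\tfrac{\d}{\d t}\bigl(t^{1-2\alpha}\tilde u'(t)\bigr)=t^{1-2\alpha}A\tilde u(t)$ with integrable right-hand side near zero (since $\Re(1-2\alpha)>-1$), integration shows the limit $\tilde y:=\lim_{t\to 0\rlim}t^{1-2\alpha}\tilde u'(t)$ exists. Applying Corollary \ref{coro:form_of_solution} with $x=0\in\mathcal{V}_d$ (trivially) for every $d>0$ yields $\tilde y\in\mathcal{W}_d$ for every $d$ and $\tilde u(t)=w_t(\sqrt{A})\tilde y$ on all of $[0,\infty)$.

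Next I would extract the exponential growth of $w_t$ using Lemma \ref{lemma:prep_domain_v_and_w}: for a suitable auxiliary constant $c'$ one writes $w_t=f_{2,t}+\kappa(t)\,h\cdot[\lambda\mapsto\exp^{t\lambda}]$, where $h(\lambda):=(1-\exp^{-c'\lambda})^2\lambda^{-\alpha-1/2}$ is bounded on the sector (the factor $(1-\exp^{-c'\lambda})^2$ absorbs the singularity of $\lambda^{-\alpha-1/2}$ at the origin), $\kappa(t)$ is a power of $t$, and $\|f_{2,t}(\sqrt{A})\|$ is bounded polynomially in $t$ by an estimate analogous to \eqref{eq:asymptotics_f1s}. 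Since $h(\sqrt{A})$ is bounded and commutes with $\exp^{t\sqrt{A}}$ on its domain, the identity
\[
\tilde u(t)-f_{2,t}(\sqrt{A})\tilde y=\kappa(t)\exp^{t\sqrt{A}}h(\sqrt{A})\tilde y,
\]
combined with boundedness of $\tilde u$, yields $\|\exp^{t\sqrt{A}}h(\sqrt{A})\tilde y\|\leq C(1+t^N)$ for some $N\in\N_0$ and all $t\geq 0$. Lemma \ref{lemma:laplace_implies_nullspace} then gives $h(\sqrt{A})\tilde y\in N(A)$.

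Finally, since $h$ vanishes at $\lambda=0$, the functional calculus of $\sqrt{A}$ makes $h(\sqrt{A})$ annihilate $N(A)$; hence $h(\sqrt{A})^2\tilde y=0$. As the scalar function $h^2$ vanishes only at the origin on the sector, an injectivity argument in the sectorial calculus yields $\tilde y\in N(A)$. On $N(A)$ the calculus acts by evaluation at $\lambda=0$, so $\tilde u(t)=w_t(\sqrt{A})\tilde y=\tfrac{t^{2\alpha}}{2\alpha}\tilde y$; boundedness of $\tilde u$ together with $\Re\alpha>0$ forces $\tilde y=0$. Therefore $\tilde u\equiv 0$, and injectivity of $\exp^{-c\sqrt{A}}$ gives $u=0$. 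The main obstacle I anticipate is precisely this last injectivity step: in a general Banach space one does not have a spectral projection onto $N(A)$ at hand, so passing from $h(\sqrt{A})^2\tilde y=0$ to $\tilde y\in N(A)$ requires a more delicate functional-calculus argument, or a refined choice of the decomposition of $w_t$ whose residual operator is directly injective on the complement of $N(A)$.
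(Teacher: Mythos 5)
Your overall strategy is the same as the paper's: reduce to the representation $u(t)=w_t(\sqrt{A})y$ via Corollary~\ref{coro:form_of_solution}, isolate the $\exp^{t\sqrt{A}}$ growth inside $w_t$, invoke the Liouville-type Lemma~\ref{lemma:laplace_implies_nullspace}, conclude $y\in N(A)$, and finish with $w_t(\sqrt{A})y=\tfrac{t^{2\alpha}}{2\alpha}y$. Your opening step of regularising by $\exp^{-c\sqrt{A}}$ to make $A\tilde u$ bounded, and then integrating $\frac{\d}{\d t}\bigl(t^{1-2\alpha}\tilde u'(t)\bigr)=t^{1-2\alpha}A\tilde u(t)$ to establish that the Neumann trace $\tilde y$ actually exists, is a genuine and useful addition; the paper invokes Corollary~\ref{coro:form_of_solution} without explicitly justifying that $\lim_{t\to 0\rlim}t^{1-2\alpha}u'(t)$ exists.

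However, the heart of your argument has a gap that is fatal as written. You propose to decompose $w_t$ via the function $f_{2,t}$ from Lemma~\ref{lemma:prep_domain_v_and_w}. But that lemma requires the auxiliary constant $c$ to satisfy $c>s$, i.e.\ $c>t$ in your notation: for $t\geq c'$ the function $f_{2,t}$ built with a \emph{fixed} $c'$ is no longer in $\mathcal{E}^{\text{ext}}[S_\omega]$ (the residual $\exp^{(t-c')\lambda}$ grows). Since the whole point is to send $t\to\infty$, you would be forced to let $c'$ grow with $t$, and then $h=h_t$ depends on $t$. The vector $h_t(\sqrt{A})\tilde y$ then changes with $t$, and Lemma~\ref{lemma:laplace_implies_nullspace}, which needs a polynomial bound on $\exp^{t\sqrt{A}}z$ for a \emph{fixed} $z$, no longer applies. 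The paper avoids this by regularising with the $t$-independent rational factor $\lambda^2/(1+\lambda^2)$: it defines
\[
 f_t(\lambda):=\frac{\lambda^2}{1+\lambda^2}\Bigl(w_t(\lambda)-\frac{\Gamma(\alpha)}{2^{1-\alpha}\sqrt{2\pi}}\,t^{\alpha-\frac12}\lambda^{-\alpha-\frac12}\exp^{t\lambda}\Bigr),
\]
shows $f_t\in\mathcal{E}[S_{\omega/2}]$ with $\|f_t(\sqrt{A})\|\to 0$ as $t\to\infty$, and thereby isolates the $t$-independent vector $A^{\frac34-\frac\alpha2}(1+A)^{-1}y$ inside $\exp^{t\sqrt{A}}$.

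This rational choice also resolves the ``injectivity'' obstacle you correctly flag at the end. Once one knows $A^{\frac34-\frac\alpha2}(1+A)^{-1}y\in N(A)$, there is no need for any abstract injectivity: one uses the explicit algebraic inversion
\[
 A^{\frac\alpha2+\frac14}(1+A)\,A^{\frac34-\frac\alpha2}(1+A)^{-1}y = Ay,
\]
together with $N(A)=N(A^\beta)$ for $\Re\beta>0$ and $(1+A)z=z$ on $N(A)$, to read off $Ay=0$ directly. With your $h(\lambda)=(1-\exp^{-c'\lambda})^2\lambda^{-\alpha-\frac12}$, the factor $\bigl((1-\exp^{-c'\lambda})/\lambda\bigr)^2$ is bounded but tends to $0$ at infinity, so it is not boundedly invertible in the calculus, and your step from $h(\sqrt{A})^2\tilde y=0$ to $\tilde y\in N(A)$ does not follow without a spectral decomposition that a general Banach space does not provide. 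Replacing the exponential regulariser by the rational one and the subsequent abstract injectivity by the explicit fractional-power identity is exactly the repair you need.
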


\begin{proof}
 By Corollary \ref{coro:form_of_solution} we know that $u$ is of the form $u(t) = w_t(\sqrt{A})y$ for some suitable $y$. 
 We need to show that the boundedness assumption implies $y=0$. 
 Define for $t > 0$ the function $f_t: S_{\pi/2} \to \C$ by
 \[
  f_t(\lambda) := \frac{\lambda^2}{1 + \lambda^2} \Bigl( w_t (\lambda) - \frac{\Gamma(\alpha)}{2^{1-\alpha}\sqrt{2\pi}} t^{\alpha - \frac{1}{2}} \lambda^{-\alpha - \frac{1}{2}} 
                  \exp^{t \lambda} \Bigr). 
 \] 
 We estimate with $\varphi \in (\omega/2, \pi /2)$
 \begin{align*}
       &  \int\limits_0^{\infty} \abs{f_t \left( s \exp^{i \varphi} \right)} \frac{\d s}{s} \\
  \leq &  \frac{1}{\cos(\varphi)} \int\limits_0^{\infty} \frac{s}{1 + s^2} \abs{w_t \left(s \exp^{i \varphi} \right) - 
          \frac{\Gamma(\alpha)}{2^{1-\alpha}\sqrt{2\pi}} t^{\alpha - \frac{1}{2}} \left( s \exp^{i \varphi} \right)^{-\alpha - \frac{1}{2}} 
          \exp^{t s \exp^{i \varphi}}} \d s \\ 
  =    &  \frac{1}{\cos(\varphi)} \int\limits_0^{\frac{R}{t}} \frac{s}{1 + s^2} \abs{w_t \left(s \exp^{i \varphi} \right) - 
          \frac{\Gamma(\alpha)}{2^{1-\alpha}\sqrt{2\pi}} t^{\alpha - \frac{1}{2}} \left( s \exp^{i \varphi} \right)^{-\alpha - \frac{1}{2}} 
          \exp^{t s \exp^{i \varphi}}} \d s  \\  
     + &  \frac{1}{\cos(\varphi)} \int\limits_{\frac{R}{t}}^{\infty} \frac{s}{1 + s^2} \abs{w_t \left(s \exp^{i \varphi} \right) - 
          \frac{\Gamma(\alpha)}{2^{1-\alpha}\sqrt{2\pi}} t^{\alpha - \frac{1}{2}} \left( s \exp^{i \varphi} \right)^{-\alpha - \frac{1}{2}} 
          \exp^{t s \exp^{i \varphi}}} \d s \\
  \leq &  \frac{t^{2 \Re \alpha - 2}}{\cos(\varphi)} \int\limits_{0}^{R} \frac{s}{1 + \frac{s^2}{t^2}} \abs{w_1 \left(s \exp^{i \varphi} \right) -  
          \frac{\Gamma(\alpha)}{2^{1-\alpha}\sqrt{2\pi}} \left( s \exp^{i \varphi} \right)^{-\alpha - \frac{1}{2}} 
          \exp^{s \exp^{i \varphi}}} \d s \\ 
     + &  \frac{\abs{\Gamma(\alpha)} \exp^{\abs{\Im \alpha} \pi}}{2^{1 - \Re \alpha} \cos(\varphi)} 
          \int\limits_{\frac{R}{t}}^{\infty} \frac{s}{1 + s^2} \frac{t^{\Re \alpha}}{s^{\Re \alpha}} 
          \frac{C}{st} \d s \\
    =: &  (\ast),         
 \end{align*}  
 where we substituted in the first integral and used the asymptotics analogously to the ones derived in Equation \eqref{eq:asymptotics_f1s} for the second integral. 
 Estimating the expression once more results now in
 \begin{align*}
       &  (\ast) \\
  \leq &  \frac{t^{2 \Re \alpha - 2}}{\cos(\varphi)} \int\limits_{0}^{R} s \left( \abs{w_1 \left(s \exp^{i \varphi} \right)} +  
          \frac{\abs{\Gamma(\alpha)}}{2^{1-\Re \alpha}\sqrt{2\pi}} \exp^{\abs{\Im \alpha} \pi} s^{- \Re \alpha - \frac{1}{2}} 
          \exp^{s \cos(\varphi)} \right) \d s \\ 
     + &  t^{\Re \alpha - 1} \frac{C \abs{\Gamma(\alpha)} \exp^{\abs{\Im \alpha} \pi}}{2^{1 - \Re \alpha} \cos(\varphi)} 
          \int\limits_{0}^{\infty} \frac{s^{-\Re \alpha}}{1 + s^2} \d s.         
 \end{align*}  
 It follows that $f_t \in \mathcal{E}(S_{\omega/2})$ and that the entire expression converges to $0$ as $t \to \infty$. 
 Since $u$ is bounded so is the function $t \mapsto A \left( 1 + A \right)^{-1} w_t (\sqrt{A})y$. 
 By the same reasoning as in Lemma \ref{lemma:nested_domains} we have
 \[
    f_t(\sqrt{A})y = A \left( 1 + A \right)^{-1} w_t (\sqrt{A})y - D_{\alpha} t^{\alpha - \frac{1}{2}} \exp^{t \sqrt{A}} 
    A^{\frac{3}{4} - \frac{\alpha}{2}} \left( 1 + A \right)^{-1} y. 
 \]
 where $D_{\alpha} = \Gamma(\alpha) / 2^{1-\alpha}\sqrt{2\pi}$. 
 Since the left hand side converges to $0$ and the first term on the right hand side is bounded, it follows that
 \[
   t \mapsto \exp^{t \sqrt{A}} A^{\frac{3}{4} - \frac{\alpha}{2}} \left( 1 + A \right)^{-1} y
 \] 
 is either bounded ($\Re \alpha \geq 1/2$) or polynomial bounded ($\Re \alpha < 1/2$) for big values of $t$. 
 So one has for some constant $C> 0$
 \[
  \norm{\exp^{t \sqrt{A}} A^{\frac{3}{4} - \frac{\alpha}{2}} \left( 1 + A \right)^{-1} y} \leq C \left(1 + t^{\frac{1}{2}-\Re \alpha} \right)
 \]
 which, by an application of Lemma \ref{lemma:laplace_implies_nullspace}, yields
 \[
  A^{\frac{3}{4} - \frac{\alpha}{2}} \left( 1 + A \right)^{-1} y \in N(A).  
 \] 
 Using again the fact that all fractional powers have the same null space, this implies
 \[
  0 = A^{\frac{\alpha}{2}+\frac{1}{4}} (1+A) A^{\frac{3}{4} - \frac{\alpha}{2}} \left( 1 + A \right)^{-1} y = Ay. 
 \] 
 Thus we can conclude $y \in N(A)$.
 Let know $c>t>0$.  
 It follows that
 \[
  w_t(\sqrt{A})y = \exp^{cA} \frac{1}{2\pi i} \int\limits_{\gamma} w_t(\sqrt{\lambda}) \exp^{-c \lambda} \underbrace{(\lambda-A)^{-1}y}_{= \frac{1}{\lambda}y} \d \lambda
  = w_t(0) \underbrace{\exp^{cA} y}_{= y} = \frac{t^{2\alpha}}{2\alpha} y
 \]
 which is still assumed to be a bounded function in the variable $t$. 
 This finally implies $y = 0$ which was to be shown. 
\end{proof}

\section{Dirichlet-to-Neumann operator}

By Theorem \ref{thm:existence_of_cesp_solution} and Theorem \ref{thm:uniqueness_of_cesp_solution} we know that \eqref{ODE} has a unique solution for all $\alpha \in \C_{0< \Re < 1}$ and all densely defined sectorial operators $A$ in a Banach space $X$
(actually we can drop the assumption of dense domain if we restricted ourselves to the subspace $\overline{\mathcal{D}(A)}$ and studied the part of $A$ in this subspace, see also \cite{meichsner2017}). 
Using this solution the following definition makes sense now.

\begin{defi}
  We define the \emph{(generalised) Dirichlet-to-Neumann operator} $T_{\alpha}$ as  
  \begin{align*}
    \mathcal{D}\left( T_{\alpha} \right) & := \left\{ x \in X \; \middle| \;  u \text{ solves \eqref{ODE} with } u(0)=x,\,\lim\limits_{t \rightarrow 0+} - t^{1-2\alpha}u'(t) \text{ exists} \right\}, \\
    T_{\alpha} x &: = \lim\limits_{t \rightarrow 0+} - t^{1-2\alpha}u'(t).
  \end{align*}
\end{defi}

\begin{thm}
 Let $A$ be a densely defined sectorial operator in $X$. 
 Then $T_{\alpha} = c_{\alpha} A^{\alpha}$ with $c_{\alpha} = \frac{\Gamma(1-\alpha)}{2^{2\alpha-1} \Gamma(\alpha)}$.  
 \label{thm:correspondence_DtN_frac_power}
\end{thm}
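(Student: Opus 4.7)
The plan is to use the explicit form of the unique solution, namely $u(t)=U(t)x=u_t(\sqrt{A})x$ (Theorems \ref{thm:existence_of_cesp_solution} and \ref{thm:uniqueness_of_cesp_solution}), and then to compute the generalised Neumann trace at the symbol level via Lemma \ref{lemma:properties_of_u}\ref{4th_point}. That lemma yields, pointwise in $\lambda$,
\[
 -t^{1-2\alpha}\tfrac{\d}{\d t} u_t(\lambda) = c_{\alpha}\lambda^{2\alpha}\,u_{t,\alpha-1}(\lambda),
\]
where $u_{t,\alpha-1}$ denotes the function \eqref{eq:def_u} with $\alpha$ replaced by $\alpha-1$ (so the second branch of \eqref{eq:def_u} is active, as $\Re(\alpha-1)\in(-1,0)$). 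Because $u_{t,\alpha-1}$ decays exponentially at infinity by Lemma \ref{lemma:properties_of_u}\ref{6th_point} (and its counterpart for $\Re\alpha<0$), the symbol $\lambda\mapsto \lambda^{2\alpha}u_{t,\alpha-1}(\lambda)$ is in $\mathcal{E}^{\mathrm{ext}}[S_{\omega/2}]$, so the corresponding operator is bounded, and the composition rule (Theorem \ref{thm:comp_rule}) identifies $(\sqrt{A})^{2\alpha}=A^{\alpha}$.

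First I would establish the inclusion $\mathcal{D}(A^{\alpha})\subseteq \mathcal{D}(T_{\alpha})$ together with the asserted formula. For $x\in\mathcal{D}(A^{\alpha})$, differentiating $u(t)=u_t(\sqrt{A})x$ (interchange of differentiation and functional calculus is justified by the integral representation of $U$ from \eqref{defi_U} together with the bounds in Lemma \ref{lemma:exp_bound}) gives
\[
 -t^{1-2\alpha}u'(t) = c_{\alpha}\,u_{t,\alpha-1}(\sqrt{A})\,A^{\alpha}x.
\]
Then I would invoke Lemma \ref{lemma:U_is_strongly_continuous}, applied to the parameter $\alpha-1$ via the symmetry remark following \eqref{eq:def_u}, to conclude $u_{t,\alpha-1}(\sqrt{A})y\to y$ as $t\to 0+$ for every $y\in\overline{\mathcal{D}(A)}=X$. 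Taking $y:=A^{\alpha}x$ yields $T_{\alpha}x=c_{\alpha}A^{\alpha}x$.

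For the converse inclusion $\mathcal{D}(T_{\alpha})\subseteq\mathcal{D}(A^{\alpha})$, I would exploit closedness of $A^{\alpha}$. For arbitrary $x\in X$ and $t>0$, set $y_t:=u_{t,\alpha-1}(\sqrt{A})x$. Since $[\lambda\mapsto \lambda^{2\alpha}u_{t,\alpha-1}(\lambda)]\in\mathcal{E}^{\mathrm{ext}}[S_{\omega/2}]$, the operator $A^{\alpha}u_{t,\alpha-1}(\sqrt{A})$ is bounded, hence $y_t\in\mathcal{D}(A^{\alpha})$ and
\[
 c_{\alpha}A^{\alpha}y_t = -t^{1-2\alpha}u'(t).
\]
By Lemma \ref{lemma:U_is_strongly_continuous} (applied to $\alpha-1$) we have $y_t\to x$ in $X$ as $t\to 0+$, and by assumption $c_{\alpha}A^{\alpha}y_t=-t^{1-2\alpha}u'(t)\to T_{\alpha}x$. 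Closedness of $A^{\alpha}$ then gives $x\in\mathcal{D}(A^{\alpha})$ with $c_{\alpha}A^{\alpha}x=T_{\alpha}x$, completing the identification $T_{\alpha}=c_{\alpha}A^{\alpha}$.

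The main obstacle, and what deserves the most care, is the bookkeeping in the sectorial functional calculus for the auxiliary function $u_{t,\alpha-1}$: one must verify the symbol identity $-t^{1-2\alpha}\partial_t u_t=c_{\alpha}\lambda^{2\alpha}u_{t,\alpha-1}(\lambda)$ transfers to an operator identity, that $u_{t,\alpha-1}(\sqrt{A})$ maps $X$ into $\mathcal{D}(A^{\alpha})$ and commutes with $A^{\alpha}$ on $\mathcal{D}(A^{\alpha})$, and that the strong-continuity statement of Lemma \ref{lemma:U_is_strongly_continuous} survives the symmetry $\alpha\mapsto 1-\alpha$. Once these calculus points are settled, the core of the argument is just the closedness of $A^{\alpha}$ together with strong convergence of the regularizers $u_{t,\alpha-1}(\sqrt{A})$ to the identity.
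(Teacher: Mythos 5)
Your argument follows the same route as the paper: differentiate the unique solution $u(t)=u_t(\sqrt{A})x$, use the scalar identity $-t^{1-2\alpha}\partial_t u_{t,\alpha}(\lambda)=c_\alpha \lambda^{2\alpha}u_{t,\alpha-1}(\lambda)$ from Lemma~\ref{lemma:properties_of_u}\ref{4th_point}, transfer to the operator level via the functional calculus, and pass to the limit $t\to 0\rlim$ using the strong continuity of Lemma~\ref{lemma:U_is_strongly_continuous} applied to the parameter $\alpha-1$ (legitimate by the symmetry remark following~\eqref{eq:def_u}). Your forward inclusion $\mathcal{D}(A^\alpha)\subseteq\mathcal{D}(T_\alpha)$ is precisely the paper's argument; the paper writes it with the regularizer $(1+A)^{-\alpha}$ applied to an arbitrary $x\in X$ rather than with $x\in\mathcal{D}(A^\alpha)$ directly, but since $(1+A)^{-\alpha}$ is a bijection of $X$ onto $\mathcal{D}(A^\alpha)$ this is the same step.

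Where you go beyond the paper is the converse inclusion $\mathcal{D}(T_\alpha)\subseteq\mathcal{D}(A^\alpha)$. The paper stops after the computation above and declares the proof finished, which as written only yields $c_\alpha A^\alpha\subseteq T_\alpha$. Your closedness argument --- $y_t:=u_{t,\alpha-1}(\sqrt{A})x\in\mathcal{D}(A^\alpha)$ with $c_\alpha A^\alpha y_t=-t^{1-2\alpha}u'(t)$, then $y_t\to x$ and $c_\alpha A^\alpha y_t\to T_\alpha x$, hence $x\in\mathcal{D}(A^\alpha)$ by closedness of $A^\alpha$ --- supplies exactly the missing half and is correct. (The needed inputs are all verified in the paper: $\lambda\mapsto\lambda^{2\alpha}u_{t,\alpha-1}(\lambda)\in\mathcal{E}[S_{\omega/2}]$ by Lemma~\ref{lemma:properties_of_u}\ref{item:2} and \ref{item:6}, so $u_{t,\alpha-1}(\sqrt{A})$ maps $X$ into $\mathcal{D}(A^\alpha)$ with $A^\alpha u_{t,\alpha-1}(\sqrt{A})=\bigl[\lambda\mapsto\lambda^{2\alpha}u_{t,\alpha-1}(\lambda)\bigr](\sqrt{A})$ by the algebra rules of the calculus; and $A^\alpha$ is closed.) So your proposal is correct and slightly more careful than the paper's own argument, in that it explicitly establishes the equality and not merely one inclusion.
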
	

\begin{proof}
 The unique solution $u$ of \eqref{ODE} with initial value $u(0) = x$ is given by $u(z) = u_z(\sqrt{A})x$. 
 For the proof let us label again $u_z$ with the current value of $\alpha$ and write $u_{z, \alpha}$.
 We have $\alpha \in \C_{0 < \Re < 1}$. 
 So $\alpha - 1 \in \C_{-1< \Re < 0}$. 
 Let $x \in X$ be given.
 In the proof of Lemma \ref{lemma:properties_of_u} \ref{4th_point} we already argued
 \[
  - z ^{1-2\alpha} \frac{\d}{\d z} u_{z,\alpha}(\lambda) = c_{\alpha} \lambda^{2\alpha} u_{z,\alpha-1}(\lambda).  			
 \]  
 Plugging in $\sqrt{A}$ for $\lambda$ and regularising $x$ with $(1 + A)^{-\alpha}$ yields 
 \begin{align*}
  -z^{1-2\alpha} \partial_z u_{z, \alpha} (\sqrt{A}) (1+A)^{-\alpha}x & = -z^{1-2\alpha} \left( \partial_z u_{z, \alpha}(\lambda)\right) (\sqrt{A}) (1+A)^{-\alpha}x \\ 
  & = c_{\alpha} \left( \lambda^{2\alpha} u_{z, \alpha - 1}(\lambda) \right)(\sqrt{A}) (1+A)^{-\alpha}x \\ 
  & = c_{\alpha} u_{z, \alpha - 1}(\sqrt{A}) A^{\alpha}(1+A)^{-\alpha}x. 
 \end{align*}
 Now Lemma \ref{lemma:U_is_strongly_continuous} is applicable giving us
 \[
  \lim\limits_{\stackrel{z \to 0}{z \in S_{\delta}}} -z^{1-2\alpha} \partial_z u_{z, \alpha} (\sqrt{A}) (1+A)^{-\alpha}x = c_{\alpha} A^{\alpha}(1+A)^{-\alpha}x
 \]
 which finishes the proof. 
\end{proof}
	
\begin{remark} \leavevmode
   In \cite{meichsner2017} the authors already conjectured that the generalised Dirichlet-to-Neumann operator is already closed. 
   The above theorem proves it. 
\end{remark}

\section{Example and outlook}
\label{sec:example}

We finish the paper with a standard example and give an outlook on open questions. 

\begin{example}[Fractional Laplacian in $L^p$]
 Let $n \geq 2$, $\Omega \subset \R^n$ be a bounded Lipschitz domain fulfilling a uniform outer ball condition and $1 < p < \infty$. 
 We define the weak Dirichlet-Laplacian $A$ by
 \[
  \mathcal{D}(A) := \left\{ u \in W^{1,p}_0(\Omega) \mid \Delta u \in L^p(\Omega) \right\}, \quad Au := -\Delta u. 
 \]
 The so defined operator is the generator of a $C_0$-semigroup of contractions \cite[Theorem 3.8]{wood2007}. 
 In particular, $A$ is sectorial. 
 The generated semigroup is even analytic and the spectrum $p$-independent \cite[Corollary 4.2 and Corollary 4.3]{wood2007}. 
 Our uniquess result Theorem \ref{thm:uniqueness_of_cesp_solution} makes it possible to state a solution for Problem \eqref{ODE} in terms of the semigroup generated by $A$. 
 Namely, it holds that the solution to
 \begin{align*}
  \partial_z^2 u(z,x) + \frac{1-2\alpha}{z} \partial_z u(z,x) & =  - \Delta_x u (z,x) \quad & \bigl((z,x) \in S_{\frac{\pi}{4}} \times \Omega\bigr), \\
  u(0, \cdot ) & = f \in L^p(\Omega),
 \end{align*}
 is given by
 \[
  u(z,x) = \frac{1}{\Gamma(\alpha)} \left( \frac{z}{2} \right)^{2\alpha} \int\limits_0^\infty r^{-\alpha} \exp^{-\frac{z^2}{4r}} \left( \exp^{r \Delta_x}f\right)(x) \, 
  \frac{\d r}{r},
 \]
 see for example \cite[Theorem 2.1]{gale2013}. 
 The operator $A$ admits bounded imaginary powers, see for example \cite[Theorem 2]{duong1996}. 
 Hence, its domain is a complex interpolation space, i.e.\
 \[
  \forall \theta \in (0,1), \alpha \in \C_{\Re > 0}: \, \mathcal{D}(A^{\theta \alpha}) = \left[ L^p(\Omega), \mathcal{D}(A^{\alpha}) \right]_{\theta}. 
 \]
 In particular, for $1 < p \leq 2$ and $\alpha = 1$ one has
 \[
  \mathcal{D}(A) = W^{1,p}_0(\Omega) \cap W^{2,p}(\Omega)
 \]
 \cite[Corollary 3.10]{wood2007} and
 \[
  \mathcal{D}(A^{\theta}) = \left[ L^p(\Omega), W^{1,p}_0(\Omega) \cap W^{2,p}(\Omega) \right]_{\theta}.
  %= \left[ L^p(\Omega), W^{1,p}_0(\Omega) \right]_{\theta} \cap \Bigl[ L^p(\Omega), W^{2,p}(\Omega) \Bigr]_{\theta}. 
 \]
 Further, for $\theta \in (0,1)$, $\alpha \in \C_{\Re > 0}$ such that $\theta \cdot \Re \alpha \in (0,1)$ we have
 \[
  f \in \left[ L^p(\Omega), \mathcal{D}(A^{\alpha}) \right]_{\theta} \, \Leftrightarrow \, u(0, \cdot ) = f \, \text{ and } \, 
  \lim\limits_{z \to 0} z ^{1-2\theta \alpha} \partial_z u(z, \cdot) \, \text{exists in } L^p(\Omega).  
 \]
\end{example}

Concerning the outlook, in \cite{balakrishnan1960} Balakrishnan showed even more. 
\begin{thm}
 Assume $A$ to be a densely defined operator with non-empty resolvent set $\rho(A)$. 
 Further assume that the problem
 \[
  u''(t) = Au(t) \quad (t > 0), \, u(0)=x
 \]
 has a unique bounded solution for all $x \in \mathcal{D}(A)$. 
 Then 
 \[
  u(t) =: \exp^{-Bt}x
 \]
 defines a $C_0$-semigroup whose generator $B$ fulfils $B^2 = A$. 
 If the semigroup is analytic it holds that $A$ is sectorial. 
\end{thm}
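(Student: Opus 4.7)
The plan is to use uniqueness of bounded solutions to manufacture a $C_0$-semigroup on $X$ whose negative generator $B$ squares to $A$, and then to transfer sectoriality from $B$ to $A$. For $x\in\mathcal{D}(A)$ let $u_x$ denote the unique bounded solution and define $T(t)x:=u_x(t)$. Linearity on $\mathcal{D}(A)$ is immediate from uniqueness applied to $u_x+\lambda u_y$, and translation invariance of the equation together with uniqueness gives $T(t+s)x=T(t)T(s)x$; note that $u_x(s)\in\mathcal{D}(A)$ holds automatically since the ODE forces $u_x(t)\in\mathcal{D}(A)$ everywhere.

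To pass from a semigroup on $\mathcal{D}(A)$ to a $C_0$-semigroup on $X$, I would first invoke the closed graph theorem on $x\mapsto u_x$ as a map $\bigl(\mathcal{D}(A),\|\cdot\|_A\bigr)\to C_{\mathrm{b}}\bigl([0,\infty);X\bigr)$, closedness coming from closedness of $A$ together with pointwise uniqueness. A Laplace-transform argument of Balakrishnan type then produces the identity $(\lambda^2-A)\hat{u}(\lambda)=\lambda x-Bx$, where $Bx:=-u_x'(0)$; combining this with $\rho(A)\neq\emptyset$ and the bound $\|\hat{u}(\lambda)\|\le\|u_x\|_\infty/\Re\lambda$ yields, via Hille--Yosida, a uniform bound $\|T(t)x\|\le M\|x\|$ on $\mathcal{D}(A)$. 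Density of $\mathcal{D}(A)$ then allows one to extend $T(t)$ to a bounded operator on $X$, and strong continuity at $t=0\rlim$, manifest on $\mathcal{D}(A)$ by construction, propagates to $X$.

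Let $-B$ denote the generator of the resulting $C_0$-semigroup. For $x\in\mathcal{D}(A)$ the solution $u_x$ is $C^2$, so $x\in\mathcal{D}(B)$ with $-Bx=u_x'(0)$, and differentiating $u_x'(t)=-Bu_x(t)$ once more, combined with the ODE $u_x''(t)=Au_x(t)$, gives $B^2x=Ax$; thus $A\subseteq B^2$. For the converse inclusion I would pick $\mu\in\rho(A)$ with $\pm\mu\notin\sigma(B)$, which is possible because $\sigma(B)$ lies in a halfplane, and use the factorisation $(\mu^2-B^2)=(\mu-B)(\mu+B)$ to deduce $\mu^2\in\rho(B^2)$. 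Since $\mu^2-A$ and $\mu^2-B^2$ are both bijective onto $X$ and agree on $\mathcal{D}(A)$, their domains coincide, forcing $B^2=A$. Finally, if the semigroup is analytic then $-B$ is sectorial with angle $<\pi/2$; the spectral mapping theorem \cite[Theorem 2.7.8]{haase2006} applied to $z\mapsto z^2$ confines $\sigma(A)=\sigma(B)^2$ in a sector of angle $<\pi$, and the resolvent estimate $(\mu-B^2)^{-1}=(\sqrt{\mu}-B)^{-1}(\sqrt{\mu}+B)^{-1}$ transfers the sectorial bound for $B$ to one for $A$.

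The main obstacle I anticipate is promoting the boundedness of $T(t)$ from the graph norm of $\mathcal{D}(A)$ to a uniform bound on $X$: without some auxiliary Hille--Yosida-type input, like the Laplace-transform argument sketched above, or a direct resolvent identity, one cannot extend $T(t)$ continuously to all of $X$ without breaking the semigroup identity.
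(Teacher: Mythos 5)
The paper does not actually prove this statement: it appears in Section~\ref{sec:example} as an attribution to Balakrishnan's 1960 paper \cite{balakrishnan1960}, quoted to motivate the closing conjecture. There is therefore no in-paper argument to compare against, and your proposal has to be judged on its own.

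Your algebraic scaffolding is sound: linearity of $x\mapsto u_x$, the semigroup law via translation invariance plus uniqueness (using that $u_x(s)\in\mathcal{D}(A)$ by definition of a solution), and the inclusion $A\subseteq B^2$ on $\mathcal{D}(A)$ all go through. You correctly identify the crux as the uniform bound $\norm{T(t)x}\leq M\norm{x}$; but the route you sketch does not close it. The closed graph theorem, granting that the graph is indeed closed (which itself needs work, since passing to the limit in $\int Au_{x_n}\phi$ requires control on $Au_{x_n}$ that the hypotheses do not immediately give), produces only $\norm{u_x}_{\infty}\lesssim\norm{x}+\norm{Ax}$, and the jump from the graph-norm bound to an $X$-norm bound is exactly what Hille--Yosida cannot supply: the resolvent estimate $\norm{\lambda(\lambda+B)^{-1}}\leq M$ that Hille--Yosida consumes is equivalent to the uniform semigroup bound you are trying to produce, so the reasoning is circular. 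One genuinely needs an extra analytic input, e.g.\ treating $R(\lambda)x := \int_0^\infty \exp^{-\lambda t}u_x(t)\,\d t$ as a pseudo-resolvent and exploiting the identity $u_{(\mu_0-A)^{-1}x} = (\mu_0-A)^{-1}u_x$ for a fixed $\mu_0\in\rho(A)$, which is how the bound is obtained in \cite{balakrishnan1960}.

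There is a second gap in the step $B^2\subseteq A$. You want a $\mu$ with $\mu^2\in\rho(A)$ and $\pm\mu\in\rho(B)$ so that $(\mu-B)(\mu+B)$ is bijective. But since $-B$ generates a bounded $C_0$-semigroup, $\sigma(B)\subseteq\overline{\C_{\Re\geq 0}}$, so one of $\pm\mu$ always has $\Re\geq 0$ and nothing prevents it from lying in $\sigma(B)$; the hypothesis $\rho(A)\neq\emptyset$ alone does not provide a $\mu$ avoiding this. (Replacing the factorisation by ``$\mu_0-B^2$ is surjective because $\mu_0-A$ is, so it suffices to show injectivity'' is the right reduction, but showing $N(\mu_0-B^2)=\{0\}$ still requires an argument: if $B^2y=\mu_0 y$, boundedness of $t\mapsto\exp^{-tB}y$ forces $By=\sqrt{\mu_0}\,y$ with $\Re\sqrt{\mu_0}>0$, which is consistent with $\sigma(B)$ lying in the right half-plane, so no contradiction drops out.) This step needs to be reworked before the claim $B^2=A$ is established.
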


The authors conjecture that a corresponding result holds true also for the ODE in \eqref{ODE}.
%Its proof might be subject to another work. 

\bibliographystyle{plainnat}

\bibliography{dtn}

\begin{thebibliography}{28}
\providecommand{\natexlab}[1]{#1}
\providecommand{\url}[1]{\texttt{#1}}
\expandafter\ifx\csname urlstyle\endcsname\relax
  \providecommand{\doi}[1]{doi: #1}\else
  \providecommand{\doi}{doi: \begingroup \urlstyle{rm}\Url}\fi

\bibitem[nis()]{nist2019}
{NIST Digital Library of Mathematical Functions}.
\newblock Version 1.0.22, visited 09.05.2019 \url{http://dlmf.nist.gov/}.

\bibitem[Arendt et~al.(2018)Arendt, ter Elst, and Warma]{arendt2016}
W.~Arendt, A.~F.~M. ter Elst, and M.~Warma.
\newblock {Fractional powers of sectorial operators via the
  Dirichlet-to-Neumann operator}.
\newblock \emph{Comm. Partial Differential Equations}, 43\penalty0
  (1):\penalty0 1--24, 2018.

\bibitem[Balakrishnan(1959)]{balakrishnan1959}
A.~V. Balakrishnan.
\newblock {An operational calculus for infinitesimal generators of semigroups}.
\newblock \emph{Trans. Amer. Math. Soc.}, 91:\penalty0 330--353, 1959.

\bibitem[Balakrishnan(1960)]{balakrishnan1960}
A.~V. Balakrishnan.
\newblock Fractional powers of closed operators and the semigroups generated by
  them.
\newblock \emph{Pacific J. Math.}, 10\penalty0 (2):\penalty0 419--437, 1960.

\bibitem[Bochner(1949)]{bochner1949}
S.~Bochner.
\newblock {Diffusion Equation and Stochastic Processes}.
\newblock \emph{Proc. Nat. Acad. Sciences}, 35:\penalty0 368--370, 1949.

\bibitem[Caffarelli and Silvestre(2007)]{caffarelli2007}
L.~Caffarelli and L.~Silvestre.
\newblock An extension problem related to the fractional laplacian.
\newblock \emph{Comm. Partial Differential Equations}, 32\penalty0
  (8):\penalty0 1245--1260, 2007.

\bibitem[deLaubenfels(1993)]{delaubenfels1993}
R.~deLaubenfels.
\newblock {Unbounded holomorphic functional calculus and abstract Cauchy
  problems for operators with polynomial bounded resolvents}.
\newblock \emph{J. Funct. Anal.}, 114\penalty0 (2):\penalty0 348--394, 1993.

\bibitem[Duong and McIntosh(1996)]{duong1996}
X.T. Duong and A.~McIntosh.
\newblock {Functional Calculi of Second-Order Elliptic Partial Differential
  Operators with Bounded Measurable Coefficients}.
\newblock \emph{J. Geom. Anal.}, 6\penalty0 (2):\penalty0 181--205, 1996.

\bibitem[Engel and Nagel(2000)]{EngelNagel2000}
K.-J. Engel and R.~Nagel.
\newblock \emph{One-parameter semigroups for linear evolution equations},
  volume 194 of \emph{Grad. Texts in Math.}
\newblock Springer, 2000.

\bibitem[Feller(1952)]{feller1952}
W.~Feller.
\newblock On a generalization of marcel riesz' potentials and the semi-groups
  generated by them.
\newblock \emph{Comm. Sém. Mathém. Université de Lund}, pages 73--81, 1952.

\bibitem[Gal{\'e} et~al.(2013)Gal{\'e}, Miana, and Stinga]{gale2013}
J.~E. Gal{\'e}, P.~J. Miana, and P.~R. Stinga.
\newblock Extension problem and fractional operators: semigroups and wave
  equations.
\newblock \emph{J. Evol. Equ.}, 13\penalty0 (2):\penalty0 343--368, 2013.

\bibitem[Haase(2006)]{haase2006}
M.~Haase.
\newblock \emph{The Functional Calculus for Sectorial Operators}.
\newblock Operator Theory: Advances and Applications. Birkh{\"a}user Basel,
  2006.

\bibitem[Haase(2018)]{isem21}
M.~Haase.
\newblock {Lectures on Functional Calculus}, 2018.
\newblock Lecture Notes of the 21st Internet Seminar
  \url{https://www.math.uni-kiel.de/isem21/en/course/phase1/isem21-lectures-on-functional-calculus}.

\bibitem[Hille(1948)]{hille1948}
E.~Hille.
\newblock \emph{Functional analysis and semigroups}, volume~31 of \emph{Amer.
  Math. Soc. Colloquium Publications}.
\newblock de Gruyter, 1948.

\bibitem[Kato(1960)]{kato1960}
T.~Kato.
\newblock {Note on fractional powers of linear operators}.
\newblock \emph{Proc. Japan Acad.}, 36:\penalty0 94--96, 1960.

\bibitem[Kato(1980)]{kato1980}
T.~Kato.
\newblock \emph{Perturbation Theory for Linear Operators}, volume 132 of
  \emph{Grundlagen der mathematischen Wissenschaften}.
\newblock Springer, 2nd edition, 1980.

\bibitem[Komatsu(1969)]{komatsu1969}
H.~Komatsu.
\newblock {Fractional powers of operators, III Negative powers}.
\newblock \emph{J. Math. Soc. Japan}, 21:\penalty0 205--220, 1969.

\bibitem[Lunardi(1995)]{lunardi1995}
A.~Lunardi.
\newblock \emph{Analytic Semigroups and Optimal Regularity in Parabolic
  Problems}.
\newblock Modern Birkh{\"a}user Classics. Springer Basel, 1995.

\bibitem[Martinez and Sanz(2001)]{martinez2001}
C.~Martinez and M.~Sanz.
\newblock \emph{The Theory of Fractional Powers of Operators}.
\newblock North-Holland Mathematics Studies. Elsevier Science, 2001.

\bibitem[McIntosh(1986)]{mcintosh1986}
A.~McIntosh.
\newblock {Operators which have an $H_{\infty}$ functional calculus}.
\newblock \emph{Miniconference on operator theory and partial differential
  equations}, pages 210--231, 1986.

\bibitem[Meichsner and Seifert()]{meichsner2017}
J.~Meichsner and C.~Seifert.
\newblock {Fractional powers of non-negative operators in Banach spaces via the
  Dirichlet-to-Neumann operator}.
\newblock arxiv preprint \url{https://arxiv.org/abs/1704.01876}.

\bibitem[Molchanow and Ostrovskii(1968)]{molchanow1968}
S.~A. Molchanow and E.~Ostrovskii.
\newblock Symmetric stable processes as traces of degenerate diffusion
  processes.
\newblock \emph{Theory Probab. Appl.}, 14\penalty0 (1):\penalty0 128--131,
  1968.

\bibitem[Nelson(1958)]{nelson1958}
E.~Nelson.
\newblock {A Functional Calculus Using Singular Laplace Integrals}.
\newblock \emph{Trans. Amer. Math. Soc.}, 88\penalty0 (2):\penalty0 400--413,
  1958.

\bibitem[Phillips(1952)]{phillips1952}
R.~S. Phillips.
\newblock {On the generation of semigroups of linear operators}.
\newblock \emph{Pacific J. Math.}, 2:\penalty0 343--369, 1952.

\bibitem[Schilling et~al.(2012)Schilling, Song, and Vondracek]{schilling2012}
R.~L. Schilling, R.~Song, and Z.~Vondracek.
\newblock \emph{Bernstein Functions: Theory and Applications}, volume~37 of
  \emph{de Gruyter Stud. Math.}
\newblock de Gruyter, 2nd edition, 2012.

\bibitem[Stinga and Torrea(2010)]{stinga2010}
P.~R. Stinga and J.~L. Torrea.
\newblock {Extension Problem and Harnack's Inequality for Some Fractional
  Operators}.
\newblock \emph{Comm. Partial Differential Equations}, 35\penalty0
  (11):\penalty0 2092--2122, 2010.

\bibitem[Teschl(2011)]{teschl2011}
G.~Teschl.
\newblock \emph{{Ordinary Differential Equations and Dynamical Systems}}.
\newblock Graduate Studies in Mathematics. American Mathematical Society, 10th
  edition, 2011.

\bibitem[Wood(2007)]{wood2007}
I.~Wood.
\newblock {Maximal $L^p$-regularity for the Laplacian on Lipschitz domains}.
\newblock \emph{Math. Z.}, 255\penalty0 (4):\penalty0 855--875, 2007.

\end{thebibliography}

\end{document}